\documentclass[a4paper, twoside, 11pt, english]{article}

\usepackage{microtype}
\usepackage{youngtab}
\usepackage{amsmath, amsthm, amsfonts, amssymb}
\usepackage[T1]{fontenc}
\usepackage{babel}
\usepackage{times, euler, euscript}
\usepackage{fancyhdr, titlesec, url, enumerate}
\usepackage{xy4}
\usepackage{tikz}
\usepackage{hyperref}
\usepackage{ifthen}
\usepackage{comment}
\usepackage{array}
\usepackage{multirow}
\usepackage{cancel}
\usepackage{float}
\usepackage{time}
\CompileMatrices

\usetikzlibrary{calc}

\DeclareMathAlphabet{\mathscr}{T1}{pzc}{m}{it}


\newcount\hh
\newcount\mm
\mm=\time
\hh=\time
\divide\hh by 60
\divide\mm by 60
\multiply\mm by 60
\mm=-\mm
\advance\mm by \time
\def\hhmm{\number\hh:\ifnum\mm<10{}0\fi\number\mm}


\titleformat{\section}[block]{\scshape\filcenter\Large}{\thesection.}{.5em}{}
\titleformat{\subsection}[block]{\bfseries\filcenter\large}{\thesubsection.}{.5em}{\medskip}
\titleformat{\subsubsection}[runin]{\bfseries}{\thesubsubsection.}{.5em}{}[.]
\titlespacing{\subsubsection}{0pt}{10pt}{.5em}

\newtheoremstyle{ntheorem}%
	{\topsep}{\topsep}{\itshape}{0pt}{\bfseries}{.}{.5em}%
	{\thmnumber{#2.\hspace{.5em}}\thmname{#1}\thmnote{ (#3)}}
	
\newtheoremstyle{ndefinition}%
	{\topsep}{\topsep}{\normalfont}{0pt}{\bfseries}{.}{.5em}%
	{\thmnumber{#2.\hspace{.5em}}\thmname{#1}\thmnote{ (#3)}}
	
\newtheoremstyle{nremark}%
	{\topsep}{\topsep}{\normalfont}{0pt}{\itshape}{.}{.5em}%
	{\thmnumber{}\thmname{#1}\thmnote{ (#3)}}

\theoremstyle{ntheorem}
  	\newtheorem{theorem}[subsubsection]{Theorem}
  	\newtheorem{proposition}[subsubsection]{Proposition}
	\newtheorem{lemma}[subsubsection]{Lemma}
  	\newtheorem{corollary}[subsubsection]{Corollary}

\theoremstyle{ndefinition}

	\newtheorem{example}[subsubsection]{Example}
	\newtheorem{remark}[subsubsection]{Remark}

\pagestyle{fancy}
\setlength{\oddsidemargin}{0cm}
\setlength{\evensidemargin}{0cm}
\setlength{\topmargin}{0cm} 
\setlength{\headheight}{1cm}
\setlength{\headsep}{1cm}
\setlength{\textwidth}{16cm}
\setlength{\marginparwidth}{0cm}
\setlength{\footskip}{2cm}
\setlength{\headwidth}{16cm}

\fancyhead{}\fancyfoot[LC,RC]{}
\fancyhead[LE]{\leftmark}
\fancyhead[RO]{\rightmark}
\fancyfoot[LE,RO]{$\thepage$}
\fancypagestyle{plain}{
\fancyhf{}\fancyfoot[LC,RC]{}
\fancyfoot[LE,RO]{$\thepage$}

}

\setlength{\arraycolsep}{1pt}

\newcommand{\pdf}[1]{\texorpdfstring{$#1$}{1}}



\UseTips
\SelectTips{eu}{11}

\newdir{ >}{{}*!/-10pt/@{>}}
\newdir{ -}{{}*!/-10pt/@{}}
\newdir{> }{{}*!/+10pt/@{>}}

\makeatletter

\xyletcsnamecsname@{dir4{}}{dir{}}
\xydefcsname@{dir4{-}}{\line@ \quadruple@\xydashh@}
\xydefcsname@{dir4{.}}{\point@ \quadruple@\xydashh@}
\xydefcsname@{dir4{~}}{\squiggle@ \quadruple@\xybsqlh@}
\xydefcsname@{dir4{>}}{\Tttip@}
\xydefcsname@{dir4{<}}{\reverseDirection@\Tttip@}

\xydef@\quadruple@#1{%
	\edef\Drop@@{%
		\dimen@=#1\relax
		\dimen@=.5\dimen@
		\A@=-\sinDirection\dimen@
		\B@=\cosDirection\dimen@
		\setboxz@h{%
			\setbox2=\hbox{\kern3\A@\raise3\B@\copy\z@}%
			\dp2=\z@ \ht2=\z@ \wd2=\z@ \box2
			\setbox2=\hbox{\kern\A@\raise\B@\copy\z@}%
			\dp2=\z@ \ht2=\z@ \wd2=\z@ \box2
			\setbox2=\hbox{\kern-\A@\raise-\B@\copy\z@}%
			\dp2=\z@ \ht2=\z@ \wd2=\z@ \box2
			\setbox2=\hbox{\kern-3\A@\raise-3\B@ \noexpand\boxz@}%
			\dp2=\z@ \ht2=\z@ \wd2=\z@ \box2
		}%
		\ht\z@=\z@ \dp\z@=\z@ \wd\z@=\z@ \noexpand\styledboxz@
	}%
}

\xydef@\Tttip@{\kern2pt \vrule height2pt depth2pt width\z@
	\Tttip@@ \kern2pt \egroup
	\U@c=0pt \D@c=0pt \L@c=0pt \R@c=0pt \Edge@c={\circleEdge}%
	\def\Leftness@{.5}\def\Upness@{.5}%
	\def\Drop@@{\styledboxz@}\def\Connect@@{\straight@{\dottedSpread@\jot}}}
	
\xydef@\Tttip@@{%
	\dimen@=.25\dimen@
 	\B@=\cosDirection\dimen@
	\setboxz@h\bgroup\reverseDirection@\line@ \wdz@=\z@ \ht\z@=\z@ \dp\z@=\z@
	{\vDirection@(1,-1)\xydashl@ \xyatipfont\char\DirectionChar}%
	{\vDirection@(1,+1)\xydashl@ \xybtipfont\char\DirectionChar}%
}

\xydef@\ar@form{
	\ifx \space@\next \expandafter\DN@\space{\xyFN@\ar@form}%
	\else\ifx ^\next \DN@ ^{\xyFN@\ar@style}\edef\arvariant@@{\string^}%
	\else\ifx _\next \DN@ _{\xyFN@\ar@style}\edef\arvariant@@{\string_}%
	\else\ifx 0\next \DN@ 0{\xyFN@\ar@style}\def\arvariant@@{0}%
	\else\ifx 1\next \DN@ 1{\xyFN@\ar@style}\def\arvariant@@{1}%
	\else\ifx 2\next \DN@ 2{\xyFN@\ar@style}\def\arvariant@@{2}%
	\else\ifx 3\next \DN@ 3{\xyFN@\ar@style}\def\arvariant@@{3}%
	\else\ifx 4\next \DN@ 4{\xyFN@\ar@style}\def\arvariant@@{4}%
	\else\ifx \bgroup\next \let\next@=\ar@style
	\else\ifx [\next \DN@[##1]{\ar@modifiers{[##1]}}
	\else\ifx *\next \DN@ *{\ar@modifiers}%
	\else\addLT@\ifx\next \let\next@=\ar@slide
	\else\ifx /\next \let\next@=\ar@curveslash
	\else\ifx (\next \let\next@=\ar@curveinout 
	\else\addRQ@\ifx\next \addRQ@\DN@{\ar@curve@}%
	\else\addLQ@\ifx\next \addLQ@\DN@{\xyFN@\ar@curve}%
	\else\addDASH@\ifx\next \addDASH@\DN@{\defarstem@-\xyFN@\ar@}%
	\else\addEQ@\ifx\next \addEQ@\DN@{\def\arvariant@@{2}\defarstem@-\xyFN@\ar@}%
	\else\addDOT@\ifx\next \addDOT@\DN@{\defarstem@.\xyFN@\ar@}%
	\else\ifx :\next \DN@:{\def\arvariant@@{2}\defarstem@.\xyFN@\ar@}%
	\else\ifx ~\next \DN@~{\defarstem@~\xyFN@\ar@}%
	\else\ifx !\next \DN@!{\dasharstem@\xyFN@\ar@}%
	\else\ifx ?\next \DN@?{\ar@upsidedown\xyFN@\ar@}%
	\else \let\next@=\ar@error
	\fi\fi\fi\fi\fi\fi\fi\fi\fi\fi\fi\fi\fi\fi\fi\fi\fi\fi\fi\fi\fi\fi\fi \next@}

\makeatother


\newcommand{\fl}{\rightarrow}
\newcommand{\fll}{\longrightarrow}
\newcommand{\ofl}[1]{\overset{\displaystyle #1}{\fll}}

\newcommand{\dfl}{\Rightarrow}
\newcommand{\dfll}{\Longrightarrow}
\newcommand{\odfl}[1]{\overset{\displaystyle #1}{\dfll}}

\newcommand{\tfl}{\Rrightarrow}
\newcommand{\qfl}{\xymatrix@1@C=10pt{\ar@4 [r] &}}



\newcommand{\cl}[1]{\overline{#1}}
\newcommand{\rep}[1]{\widehat{#1}}

\newcommand{\tck}[1]{#1^{\top}}



\renewcommand{\phi}{\varphi}
\renewcommand{\epsilon}{\varepsilon}

\newcommand{\M}{\mathbf{M}}

\def\catego#1{{\bf{\sf #1}}}

\newcommand{\Cat}{\catego{Cat}}



\newcommand{\ifthen}[2]{\ifthenelse{#1}{#2}{}}

\newcommand{\typedeuxbase}[3]{
\begin{tikzpicture}[
	scale = 0.5, every node/.style = { inner sep = 0mm },
	onecell/.style = { text height = 1.5ex },
	]
	
\node [onecell] (u) at (0,0) {#1} ;
\node [onecell] (v) at (1,0) {#2} ;

\ifthen{\equal{#3}{1}}
	{
		\node (uv) at ($ (u.north) + (0.5,0.2) $) {} ;
		\draw ($ (u.north) + (0.1,-0.1) $) -- (uv.south) -- ($ (v.north) + (-0.1,-0.1) $) ;
	}
\ifthen{\equal{#3}{0}}
	{
		\node (uv) at ($ (u.north) + (0.5,0) $) {$\scriptstyle\times$} ;
	}
\ifthen{\equal{#3}{01}}
	{
		\node (uv) at ($ (u.north) + (0.5,0) $) {$\scriptstyle\times 1$} ;
	}
\ifthen{\equal{#3}{02}}
	{
		\node (uv) at ($ (u.north) + (0.5,0) $) {$\scriptstyle\times 2$} ;
	}
\end{tikzpicture}
}

\newcommand{\typedeux}[1]{\typedeuxbase{$u$}{$v$}{#1}}

\newcommand{\typetroisbasep}[4]{
\begin{tikzpicture}[
	scale = 0.5, every node/.style = { inner sep = 0mm },
	onecell/.style = { text height = 1ex },
	]

\node [onecell] (u) at (0,0) {#1} ;
\node [onecell] (v) at (1,0) {#2} ;
\node [onecell] (w) at (2,0) {#3} ;

\ifthen{\equal{#4}{00}}
	{
                       \node (uv) at ($ (u.north) + (0.5,0.2) $) {$\scriptstyle\times$} ;
                       \node (vw) at ($ (v.north) + (0.5,0.2) $) {$\scriptstyle\times$} ;
	}

\ifthen{\equal{#4}{11}}
	{
                       \node (uv) at ($ (u.north) + (0.5,0.2) $) {$\scriptstyle\times 1$} ;
                       \node (vw) at ($ (v.north) + (0.5,0.2) $) {$\scriptstyle\times 1$} ;
	}
\ifthen{\equal{#4}{12}}
	{
                       \node (uv) at ($ (u.north) + (0.5,0.2) $) {$\scriptstyle\times 1$} ;
                       \node (vw) at ($ (v.north) + (0.5,0.2) $) {$\scriptstyle\times 2$} ;
             }
\ifthen{\equal{#4}{21}}
	{
                       \node (uv) at ($ (u.north) + (0.5,0.2) $) {$\scriptstyle\times 2$} ;
                       \node (vw) at ($ (v.north) + (0.5,0.2) $) {$\scriptstyle\times 1$} ;
             }
\ifthen{\equal{#4}{22}}
	{
                       \node (uv) at ($ (u.north) + (0.5,0.2) $) {$\scriptstyle\times 2$} ;
                       \node (vw) at ($ (v.north) + (0.5,0.2) $) {$\scriptstyle\times 2$} ;
             }
\end{tikzpicture}
}


\newcommand{\tableau}[1]{\texttt{#1}}

\renewcommand{\leq}{\leqslant}
\renewcommand{\geq}{\geqslant}

\def\col{\mathrm{col}}
\DeclareMathOperator{\Path}{Path}
\DeclareMathOperator{\Colo}{Col}
\DeclareMathOperator{\PreColo}{PreCol}
\DeclareMathOperator{\Knuth}{Knuth}
\def\Knuthc{\Knuth^{\mathrm{c}}}
\def\Knuthcc{\Knuth^{\mathrm{cc}}}
\def\Knuthkb{\Knuth^{\mathrm{KB}}}
\def\Knuthpath{\Knuth^{\mathrm{path}}}
\DeclareMathOperator{\CC}{C}
\DeclareMathOperator{\PC}{PC}
\DeclareMathOperator{\CPC}{CPC}
\DeclareMathOperator{\Crys}{Crys}

\newcommand{\odfll}[1]{\overset{\displaystyle #1}{\dfll}}

\def\P{\mathbf{P}}

\def\lnds{\ell^{\mathrm{nds}}} 
\def\len{\ell}

\def\ordrecoldeglex{\preccurlyeq_{\text{deglex}}}
\def\ordrecolrev{\preccurlyeq_{\text{rev}}}
\def\ordreweight{\prec_{\text{weight}}}
\def\ordrenweight{\preccurlyeq_{\text{n-weight}}}
\def\ordernweight{\prec_{\text{n-weight}}}

\DeclareSymbolFont{greekletters}{OML}{cmr}{m}{it}
\DeclareMathSymbol{\varrho}{\mathalpha}{greekletters}{"25}
\DeclareMathSymbol{\varsigma}{\mathalpha}{greekletters}{"26}

\newcount\cols
{\catcode`,=\active\catcode`|=\active
 \gdef\Young#1{\hbox{$\vcenter
 {\mathcode`,="8000\mathcode`|="8000
  \def,{\global\advance\cols by 1 &}%
  \def|{\cr
        \multispan{\the\cols}\hrulefill\cr
        &\global\cols=2 }%
  \offinterlineskip\everycr{}\tabskip=0pt
  \dimen0=\ht\strutbox \advance\dimen0 by \dp\strutbox
  \halign
   {\vrule height \ht\strutbox depth \dp\strutbox##
    &&\hbox to \dimen0{\hss$##$\hss}\vrule\cr
    \noalign{\hrule}&\global\cols=2 #1\crcr
    \multispan{\the\cols}\hrulefill\cr%
   }
 }$}}
\gdef\Skew(#1:#2){\hbox{$\vcenter
{\mathcode`,="8000\mathcode`|="8000
  \dimen0=\ht\strutbox \advance\dimen0 by \dp\strutbox
  \def\boxbeg{\vbox
    \bgroup\hrule\kern-0.4pt\hbox to\dimen0\bgroup\strut\vrule\hss$}%
  \def\boxend{$\hss\egroup\hrule\egroup}%
  \def,{\boxend\boxbeg}%
  \def|##1:{\boxend\vrule\egroup\nointerlineskip\kern-0.4pt
    \moveright##1\dimen0\hbox\bgroup\boxbeg}%
  \def\\##1\\##2:{\boxend\vrule\egroup\nointerlineskip\kern-0.4pt
    \kern ##1\dimen0\moveright##2\dimen0\hbox\bgroup\boxbeg}%
  \moveright#1\dimen0\hbox\bgroup\boxbeg#2\boxend\vrule\egroup
 }$}}
}

\definecolor{vert}{rgb}{0,0.45,0}

\begin{document}
\thispagestyle{empty}

\begin{center}

\begin{Large}
\begin{uppercase}
{Knuth's coherent presentations of}
\end{uppercase}

\vskip+5pt

\begin{uppercase}
{plactic monoids of type A}
\end{uppercase}
\end{Large}

\bigskip\hrule height 1.5pt \bigskip

\begin{large}\begin{uppercase}
{Nohra Hage \qquad Philippe Malbos}
\end{uppercase}\end{large}

\vspace*{\stretch{2}}

\begin{small}\begin{minipage}{14cm}
\noindent\textbf{Abstract --}
We construct finite coherent presentations of plactic monoids of type~A.
Such coherent presentations express a system of generators and relations for the monoid extended in a coherent way to give a family of generators of the relations amongst the relations. Such extended presentations are used for representations of monoids, in particular, it is a way to describe actions of monoids on categories. Moreover, a coherent presentation provides the first step in the computation of a categorical cofibrant replacement of a monoid.
Our construction is based on a rewriting method introduced by Squier that computes a coherent presentation from a convergent one. We compute a finite coherent presentation of a plactic monoid from its column presentation that is known to be finite and convergent. Finally, we show how to reduce this coherent presentation to a Tietze equivalent one having Knuth's generators.

\smallskip\noindent\textbf{M.S.C. 2010 -- 20M05, 18D05, 68Q42, 05E10.} 
\end{minipage}\end{small}

\vspace*{\stretch{1}}

\rule{0.35\textwidth}{0.5pt}

\vspace*{\stretch{2}}

\begin{small}\begin{minipage}{12cm}
\renewcommand{\contentsname}{}
\setcounter{tocdepth}{2}
\tableofcontents
\end{minipage}
\end{small}
\end{center}

\vspace*{\stretch{4}}

\hfill \begin{small}---\;\;\today\;\;---\end{small}

\vspace*{\stretch{1}}

\begin{minipage}{15cm}
\noindent \rule{0.3\textwidth}{0.5pt}

\begin{footnotesize}
\qquad This work is partially supported by the French National Research Agency, ANR-13-BS02-0005-02.
\end{footnotesize}
\end{minipage}

\newpage

\section{Introduction}

\subsection*{Coherent presentations of plactic monoids}

\subsubsection*{Plactic monoids}

The structure of plactic monoids appeared in the combinatorial study of Young tableaux by Schensted \cite{Schensted61} and Knuth \cite{Knuth70}. The \emph{plactic monoid} of rank $n>0$ is the monoid, denoted by $\P_n$, generated by the finite set $\{1,\ldots,n\}$ and subject to the \emph{Knuth relations}:
\begin{align*}
zxy &= xzy \quad \text{for all $1\leq x\leq y < z \leq n$},\\
yzx &= yxz \quad \text{for all $1\leq x<y\leq z\leq n$}.
\end{align*}
For instance, the monoid $\P_2$ is generated by $1$ and $2$ and submitted to the relations~$211 = 121$ and $221 = 212$.
The Knuth presentation of the monoid $\P_3$ has $3$ generators and $8$ relations.
Lascoux and Sch\"{u}tzenberger used the plactic monoid in order to prove the Littlewood-Richardson rule for the decomposition of  tensor products of irreducible modules over the Lie algebra of~$n$ by~$n$ matrices,~\cite{Schutzenberger77, LascouxSchutzenberger81}. The structure of plactic monoids has several applications in algebraic combinatorics and representation theory~\cite{LascouxSchutsenberger78,LascouxSchutzenberger81,LascouxLeclercThibon95,Fulton97} and several works have generalised the notion of tableaux to classical Lie algebras~{\cite{Berelle86,Sundaram90,KashiwaraNakashima94,Littelmann96, Sheats99}}.

\subsubsection*{Syzygies of Knuth's relations}

The aim of this work is to give an algorithmic method for the syzygy problem of finding all independent irreducible algebraic relations amongst the Knuth relations and some other presentations of the plactic monoids in type~$A$.
A \emph{$2$-syzygy} for a presentation of a monoid is a relation amongst relations. For instance, using the Knuth relations there are two ways to prove the equality~$2211 = 2121$ in the monoid $\P_2$, either by applying the first Knuth relation $211=121$ or the second relation $221=212$. This two equalities are related by a syzygy. 
Starting with a monoid presentation, we would like to compute all syzygies for this presentation and in particular to compute a family of generators for the syzygies.
For instance, we will prove that in rank $2$ the two Knuth relations form a unique generating syzzygy for the Knuth relations. 
For rank greater than $3$, the syzygies problem for the Knuth presentation is difficult due to the combinatorial complexity of the relations.
In commutative algebra, the theory of Gr\"obner bases gives algorithms to compute bases for linear syzygies. By a similar method, the syzygy problem for presentation of monoids can be algorithmically solved using \emph{convergent rewriting systems}.

\subsubsection*{Rewriting and plactic monoids}

Study presentations from a rewriting approach consists in the orientation of the relations, then called \emph{reduction rules}. For instance, the relations of the monoid $\P_2$ can be oriented with respect to the lexicographic order as follows
\[
\eta_{1,1,2}:211 \dfl 121
\qquad
\epsilon_{1,2,2} : 221 \dfl 212.
\]
In a monoid presented by a rewriting system, two words are equal if they are related by a zig-zag sequence of applications of reductions rules.
A rewriting system is convergent if the reduction relation induced by the rules is well-founded and it satisfies the \emph{confluence property}. This means that any reductions starting on a same word can be extended to end on a same reduced word. Recently plactic monoids were investigated by rewriting methods~\cite{KubatOkninski14, BokutChenChenLi15, CainGrayMalheiro15, Hage14, CainGrayMalheiro14}.

\subsubsection*{Coherent presentations}

In this paper, we give a categorical description of $2$-syzygies of presentations of the monoid~$\P_n$ using \emph{coherent presentations}. Such a presentation extends the notion of a presentation of the monoid by globular homotopy generators taking into account the relations amongst the relations of the monoid.
We compute a coherent presentation of the monoid $\P_n$ using the homotopical completion procedure introduced in~\cite{GuiraudMalbosMimram13, GaussentGuiraudMalbos14}. Such a procedure extends the Knuth-Bendix completion procedure, \cite{KnuthBendix70}, by keeping track of homotopy generators created when adding rules during the completion. Its correctness is based on the Squier theorem,~\cite{Squier94}, which states that a convergent presentation of a monoid extended by the homotopy generators defined by the confluence diagrams induced by \emph{critical branchings} forms a coherent convergent presentation. The notion of critical branching describes the overlapping of two rules on a same word.
For instance, the Knuth presentation of the monoid $\P_2$ is convergent. It can be extended into a coherent presentation with a unique globular homotopy generator described by the following $3$-cell corresponding to the unique critical branching of the presentation between the rules $\eta_{1,1,2}$ and $\epsilon_{1,2,2}$:
\[
\xymatrix@!C@C=3em{
2211
  \ar@2@/^3ex/ [r] ^{2\eta_{1,1,2}}="src"
  \ar@2@/_3ex/ [r] _{\epsilon_{1,2,2}1}="tgt"
& 2121
  \ar@3 "src"!<0pt,-15pt>;"tgt"!<0pt,15pt>
}
\]
The Knuth presentation of the monoid $\P_3$ is not convergent, but it can be completed by adding $3$ relations to get a presentation with $27$ $3$-cells corresponding to the $27$ critical branchings. For the monoid~$\P_4$ we have $4$ $1$-cells and $20$ $2$-cells, for $\P_5$ we have $5$ $1$-cells and $40$ $2$-cells and for $\P_6$ we have $6$ $1$-cells and $70$ $2$-cells. However, in the last three cases, the completion is infinite and another approach is necessary to compute a finite generating family for syzygies of the Knuth presentation. 

\subsubsection*{The column presentation}

Kubat and Okni\'nski showed in \cite{KubatOkninski14} that for rank $n>3$, a finite convergent presentation of the monoid~$\P_n$ cannot be obtained by completion of the Knuth presentation with the deglex order. Then Bokut, Chen, Chen and Li in~\cite{BokutChenChenLi15} and Cain, Gray and Malheiro in \cite{CainGrayMalheiro15} constructed with independent methods a finite convergent presentation by adding column generators to the Knuth presentation.
The monoid $\P_n$ corresponds to the representations of the general Lie algebra of~$n$ by~$n$ matrices which is of type~$A$, and now called the plactic monoid of type~$A$,~\cite{DateJimboMiwa90,LascouxLeclercThibon95}. The classification of finite dimensional complex semisimple Lie algebras in classical types~$A$,~$B$,~$C$,~$D$ and in exceptional ones allows the existence of plactic monoids of the same types. Theses monoids can be defined by a case-by-case analysis using the Kashiwara theory of crystal bases~\cite{Kashiwara91,
KashiwaraNakashima94,Kashiwara94,Baker00,Lecouvey02,Lecouvey03} or in a general way using Littelmann path model~\cite{Littelmann96}.
Using the Kashiwara theory of crystal bases, the first author constructed in~\cite{Hage14} a finite and convergent presentation for plactic monoids of type~$C$. Similar presentations for plactic monoids of type~$B$,~$C$,~$D$ and~$G_2$ were obtained by Cain, Gray and Malheiro in~\cite{CainGrayMalheiro14}. Recently, finite convergent presentations of plactic monoids for any type was also obtained by the first author using the Littelmann path model,~\cite{Hage15}.
However, on the one hand, the proof given in~\cite{CainGrayMalheiro15} does not give explicitly the critical branchings of the presentation which does not permit to use the homotopical completion procedure. On the other hand, the construction in~\cite{BokutChenChenLi15} gave an explicit description of the critical branchings of the presentation, but this does not allow to get explicitly the relations amongst the relations, and in particular it is difficult to reduce such a presentation.

\subsubsection*{The Knuth coherent presentation}

We construct a coherent presentation of the monoid $\P_n$ that extends the Knuth presentation in two steps. The first step consists in giving an explicit description of the critical branchings of  the column presentation.
The column presentation of the plactic monoid has one generator~$c_u$ for each column $u$, that is, a word $u=x_p\ldots x_1$ such that $x_p>\ldots >x_1$.
Given two columns~$u$ and~$v$, using the Schensted algorithm, we compute the Schensted tableau $P(uv)$ associated to the word $uv$. One proves that the planar representation of the tableau $P(uv)$ contains at most two columns. If the planar representation is not the tableau obtained as the concatenation of the two columns~$u$ and~$v$, one defines a rule $\alpha_{u,v} : c_uc_v \dfl c_wc_{w'}$ where $w$ and $w'$ are respectively the left and right columns (with one of them possibly empty).
We show that the column presentation can be extended into a \emph{coherent column presentation} whose any $3$-cell has at most an hexagonal form.
For instance, the column presentation for the monoid~$\P_2$ has generators $c_1$, $c_2$, $c_{21}$, with the rules $\alpha_{2,1} : c_2c_1\dfl c_{21}$, $\alpha_{1,21} : c_1c_{21} \dfl c_{21}c_1$ and~$\alpha_{2,21} : c_2c_{21} \dfl c_{21}c_2$.
This presentation has only one critical branching:
\[
\xymatrix @C=3em @R=0.6em {
& c_{21}c_{21}
     \ar@3 []!<-14pt,-10pt>;[dd]!<-14pt,10pt>
\\
c_2c_1c_{21}
    \ar@2@/^/ [ur] ^-{\alpha_{2,1}c_{21}}
    \ar@2@/_/ [dr] _-{c_2\alpha_{1,21}}
\\
& c_2c_{21}c_1
    \ar@2 [r] _-{\alpha_{2,21}c_1}
& c_{21}c_2c_1
    \ar@2 [luu] _-{c_{21}\alpha_{2,1}}
}
\]
and thus the $3$-cell of the extended coherent presentation is reduced to this $3$-cell defined by this confluence diagram.
Note that for column presentations of the monoids~$\P_3$, $\P_4$ and $\P_5$ we count respectively $7$, $15$ and $31$ generators, $22$, $115$ and $531$ relations, $42$, $621$ and $6893$ $3$-cells.

The second step aimed at to reduce the coherent column presentation using Tietze transformations that coherently eliminates redundant column generators and defining relations to the \emph{Knuth coherent presentation} giving syzygies of the  Knuth presentation. For instance, if we apply this Tietze transformation on the column coherent presentation of the monoid $\P_2$, we prove that the Knuth coherent presentation of~$\P_2$ on the generators $c_1,c_2$ and the relations $\eta_{1,1,2}$, $\epsilon_{1,2,2}$ has a unique generating $3$-cell $2\eta_{1,1,2} \tfl \epsilon_{1,2,2}1$ described above.

\subsection*{Organisation and main results of the article}

\subsubsection*{Two-dimensional rewriting}

In this work, we use the polygraphical description of string rewriting systems. The polygraphic notions are briefly recalled in Section \ref{Subsection:PresentationBy2Polygraph} and we refer the reader to \cite{GuiraudMalbos14} for a deeper presentation.
A \emph{$2$-polygraph} is a data made of a directed graph $(\Sigma_0,\Sigma_1)$ and a globular extension~$\Sigma_2$ of the free monoid $\Sigma_1^\ast$ on $\Sigma_1$.
The \emph{monoid presented by $\Sigma$} is the quotient of the free monoid $\Sigma_1^\ast$ by the congruence generated by the $2$-cells of the free $2$-category $\Sigma_2^\ast$.
A \emph{rewriting step} of a $2$-polygraph~$\Sigma$ is a $2$-cell in the $2$-category~$\Sigma_2^*$ generated by $\Sigma$ and with shape
\[
\xymatrix@C=3.5em{
\bullet
	\ar [r] ^-{w} 
& \bullet
	\ar@/^3ex/ [r] ^-{u} ^{}="src"
	\ar@/_3ex/ [r] _-{v} ^{}="tgt"
	\ar@2 "src"!<0pt,-10pt>;"tgt"!<0pt,10pt> ^-{\beta}
& \bullet
	\ar [r] ^-{w'}
& \bullet
}
\]
where $\beta$ is a $2$-cell of~$\Sigma_2$ and~$w$ and~$w'$ are $1$-cells of~$\Sigma_1^*$. 
A \emph{rewriting sequence} is a finite or infinite sequence 
of rewriting steps.
The $2$-polygraph~$\Sigma$ \emph{terminates} if it has no infinite rewriting sequence.
A \emph{branching} of the $2$-polygraph~$\Sigma$ is a non ordered pair~$(f,g)$ of $2$-cells of~$\Sigma_2^*$ with a common source as in 
\[
\xymatrix @R=0.6em @C=3em {
& {v}
\\
{u}
	\ar@2@/^/ [ur] ^-{f}
	\ar@2@/_/ [dr] _-{g}
\\
& {w}
}
\]
It is \emph{local} if~$f$ and~$g$ are rewriting steps, \emph{aspherical} if $f=g$ and \emph{Peiffer} when it is of the form $(hu_2,u_1k)$ for rewriting steps $h$ and $k$ with $s_1(h)=u_1$ and $s_1(k)=u_2$. The \emph{overlapping} branchings are the remaining local branchings.
A minimal overlapping local branching is a \emph{critical branching}.
A $2$-polygraph~$\Sigma$ is \emph{confluent} if for all branching $(f,g)$ there exist $2$-cells~$f'$ and~$g'$ in~$\Sigma_2^*$ as in the following diagram: 
\[
\xymatrix @R=0.6em @C=3em{
& {v}
	\ar@2@/^/ [dr] ^-{f'}
\\
{u}
	\ar@2@/^/ [ur] ^-{f}
	\ar@2@/_/ [dr] _-{g}
&& {u'}
\\
& {w}
	\ar@2@/_/ [ur] _-{g'}
}
\]
A $2$-polygraph~$\Sigma$ is \emph{convergent} if it terminates and it is confluent. 

\subsubsection*{Plactic monoids}

In Section~\ref{placticmonoids}, we recall the definition and properties of plactic monoids. We refer the reader to~\cite{Lothaire02} and~\cite{Fulton97} for a full introduction.
The \emph{Knuth $2$-polygraph of rank $n>0$} is the 
$2$-polygraph~$\Knuth_2(n)$ whose set of $1$-cells is $\{1,\ldots,n\}$ and the set of $2$-cells is 
\[
\{
zxy \odfl{\eta_{x,y,z}} xzy \;|\; 1\leq x\leq y < z \leq n
\}
\,\cup\,
\{
yzx \odfl{\epsilon_{x,y,z}} yxz \;|\; 1\leq x<y\leq z\leq n
\}.
\]
The $2$-cells of $\Knuth_2(n)$ correspond to the Knuth relations oriented with respect to the lexicographic order and the monoid presented by the $2$-polygraph $\Knuth_2(n)$ is the monoid $\P_n$,~{\cite[Theorem 6]{Knuth70}}.

\subsubsection*{Pre-column presentation}

In~\ref{Section:Pre-columnPresentation}, we introduce the pre-column presentation. Consider the set $\col(n)$ of non-empty columns on the set $\{1,\ldots,n\}$.
One adds to the presentation~$\Knuth_{2}(n)$ one superfluous generator $c_{u}$ for any~$u$ in $\col(n)$. We denote by
$\Colo_1(n)$ the set of \emph{column generators} $c_{u}$ for any  $u$ in~$\col(n)$ and by 
\[
\gamma_u : c_{x_{p}}\ldots c_{x_{1}} \dfll  c_u
\]
the defining relation for the column generators $u=x_{p} \ldots x_{1}$ in $\col(n)$ of length greater than $2$. 
In the free monoid $\Colo_1(n)^\ast$, the  Knuth relations can  be written  in the following form
\[
c_{z}c_{x}c_{y} \odfll{\eta_{x,y,z}^c} c_{x}c_{z}c_{y}
\;\text{for}\;
1 \leq x\leq y<z  \leq n,
\quad\text{and}\quad
c_{y}c_{z}c_{x}\odfll{\epsilon_{x,y,z}^c}c_{y}c_{x}c_{z}
\;\text{for}\;
1 \leq x< y\leq z \leq n.
\]
The $2$-polygraph $\Knuthcc_2(n)$ whose $1$-cells are columns and $2$-cells are the defining relations for columns generators and  the Knuth relations $\eta_{x,y,z}^c$ and $\epsilon_{x,y,z}^c$ is a presentation of the monoid $\P_n$.
In \ref{Subsubsection:Pre-columnPresentation}, we give an other presentation of the column generators. One defines the $2$-polygraph~$\PreColo_2(n)$ with column generators and the set of $2$-cells is 
\begin{eqnarray*}
\big\{
c_{x}c_{zy}\odfll{\alpha'_{x,zy}}c_{zx}c_{y}
\;|\; 1 \leq x\leq y<z \leq n
\big\}
\,\cup\,  \hspace{7cm}\\
\hspace{0.5cm}
\big\{c_{y}c_{zx}\odfll{\alpha'_{y,zx}}c_{yx}c_{z}
\;|\; 1 \leq x< y\leq z \leq n
\big\}
\,\cup\,
\big\{
c_{x}c_{u} \odfl{\alpha'_{x,u}} c_{xu}
\;|\; xu\in \col(n) \;\;\text{and}\;\; 1\leq x \leq n
\big\},
\end{eqnarray*}
where the $2$-cells $\alpha'_{x,zy}$ and $\alpha'_{y,zx}$ correspond respectively to the Knuth relations $\eta_{x,y,z}^c$ and~$\epsilon_{x,y,z}^c$.
We prove in Proposition~\ref{Proposition:PresentationPreColo2} that the $2$-polygraph $\PreColo_2(n)$ is a presentation of the monoid $\P_n$, then called the \emph{pre-column presentation of~$\P_n$}.

\subsubsection*{Column presentation}

In \ref{Section:ColumnPresentation}, we recall the column presentation introduced in~\cite{CainGrayMalheiro15}.
Given columns $u$ and $v$, if the planar representation of the Schensted tableau $P(uv)$ is not the tableau obtained as the concatenation of the two columns $u$ and $v$, we will denote $\typedeux{0}$. In this case, the tableau $P(uv)$ contains at most two columns and we will denote~$\typedeux{01}$ if the tableau $P(uv)$ has one column and we will denote~$\typedeux{02}$ if the tableau~$P(uv)$ has two columns.
When $\typedeux{0}$, we define a $2$-cell
\[
\alpha_{u,v} :  c_uc_v \dfl c_{w}c_{w'}
\]
where $w=uv$ and $c_{w'}=1$, if $\typedeux{01}$, and $w$ and $w'$ are respectively the left and right columns of the tableau $P(uv)$, if $\typedeux{02}$.
The $2$-polygraph~$\Colo_2(n)$ whose set of $1$-cells is $\Colo_1(n)$ and the $2$-cells are the~$\alpha_{u,v}$  is a finite convergent presentation of the monoid $\P_n$, called the \emph{column presentation} of the monoid~$\P_n$.
The proof given in~\cite{CainGrayMalheiro15} for the convergence of $\Colo_2(n)$ consists in showing that the $2$-polygraph $\Colo_2(n)$ has the unique normal form property.  
The construction in Section~\ref{SubSection:CoherentColumnPresentation} gives an other proof of the confluence of the $2$-polygraph~$\Colo_2(n)$ by showing the confluence of all the critical branchings of the column presentation. 

\subsubsection*{Coherent column presentation}

In Section \ref{Section:CoherentColumnPresentation}, we recall the notion of coherent presentation of a monoid.
A \emph{$(3,1)$-polygraph} is a pair $(\Sigma_2,\Sigma_3)$ made of a $2$-polygraph $\Sigma_2$ and a globular extension $\Sigma_3$ of the free $(2,1)$-category $\tck{\Sigma}_2$.
A \emph{coherent presentation} of a monoid $\M$ is a $(3,1)$-polygraph whose underlying $2$-polygraph is a presentation of the monoid $\M$ and such that, for every $2$-sphere $\gamma$ of $\tck{\Sigma}_2$, there exists a $3$-cell in $\tck{\Sigma}_3$ with boundary $\gamma$.
Using the homotopical completion procedure from \cite{GaussentGuiraudMalbos14}, we extend the $2$-polygraph $\Colo_2(n)$ into a coherent presentation~$\Colo_3(n)$ of the monoid $\P_n$. In particular, we explicit  all the $3$-cells $\mathcal{X}_{u,v,t}$ given by the confluence diagrams of the critical branchings and having the following hexagonal form
\[
\xymatrix @!C @C=2.3em @R=0.6em {
&
c_{e} c_{e'}c_t
   \ar@2[r] ^{c_{e}\alpha_{e',t}}
     \ar@3 []!<30pt,-8pt>;[dd]!<30pt,8pt> ^{\mathcal{X}_{u,v,t}}
&
c_{e} c_{b}c_{b'}
  \ar@2[dr] ^{\alpha_{e,b}c_{b'}}
\\
c_uc_vc_t
  \ar@2[ur] ^{\alpha_{u,v}c_t}
  \ar@2[dr] _{c_u\alpha_{v,t}}
&&&
c_{a}c_{d} c_{b'}
\\
&
c_uc_{w}c_{w'}
  \ar@2[r] _{\alpha_{u,w}c_{w'}}
&
c_{a}c_{a'}c_{w'}
 \ar@2[ur] _{c_{a}\alpha_{a',w'}}
}
\]
for any columns $u$, $v$ and $t$ such that \typetroisbasep{$u$}{$v$}{$t$}{00}. This shows the first main result of this article:

\begin{quote}
{\bf Theorem \ref{MainTheoremA}.}
\emph{
For $n>0$, the $(3,1)$-polygraph $\Colo_3(n)$ is a coherent presentation of the monoid $\P_n$.
}
\end{quote}

The extended presentation $\Colo_3(n)$ is called the \emph{column coherent presentation} of the monoid~$\P_n$.

\subsubsection*{Pre-column coherent presentation}

In Section \ref{Section:ReductionCoherentPresentation}, using the homotopical reduction procedure given in~{\cite[2.3.3]{GaussentGuiraudMalbos14}, we reduce the coherent presentation~$\Colo_3(n)$ into a smaller coherent presentation of the monoid~$\P_n$. We proceed in three steps. Firstly, we apply a homotopical reduction  on the $(3,1)$-polygraph $\Colo_3(n)$ with a collapsible part defined by some of the generating triple confluences of the $2$-polygraph $\Colo_2(n)$. In this way, we reduce the coherent presentation~$\Colo_3(n)$ of the monoid~$\P_n$ into the coherent presentation~$\overline{\Colo}_3(n)$ of~$\P_n$, whose underlying $2$-polygraph is $\Colo_2(n)$ and the $3$-cells~$\mathcal{X}_{u,v,t}$ are those of $\Colo_3(n)$, but with~$u$ is of length~$1$.
Then we reduce the coherent presentation~$\overline{\Colo}_3(n)$ into a coherent presentation $\PreColo_3(n)$ obtained from $\PreColo_2(n)$ by adjunction of  the $3$-cell $R_{\Gamma_3}(C'_{x,v,t})$ where
\[
\xymatrix @C=3em @R=0.6em {
& {c_{xv}c_{t}}
\ar@3 []!<0pt,-10pt>;[dd]!<0pt,+10pt> ^{C'_{x,v,t}}
\\
{c_{x}c_{v}c_{t}}
	\ar@2@/^/ [ur] ^-{\alpha_{x,v}c_{t}}
	\ar@2@/_/ [dr] _-{c_{x}\alpha_{v,t}}
\\
& {c_{x}c_{w}c_{w'}}
	\ar@2 [r] _-{\alpha_{x,w}c_{w'}}
& {c_{xv}c_{z_{l}\ldots z_{q+1}}c_{w'}}
	\ar@2 [luu] _-{c_{xv}\alpha_{z_{l}\ldots z_{q+1}, w'}}
}
\]
with \typetroisbasep{$x$}{$v$}{$t$}{12}, and the $3$-cell $R_{\Gamma_3}(D_{x,v,t})$ where
\[
\xymatrix @!C @C=2.3em @R=0.6em {
&
c_{e} c_{e'}c_t
  \ar@2[r] ^{c_{e}\alpha_{e',t}}
     \ar@3 []!<40pt,-10pt>;[dd]!<40pt,10pt> ^{D_{x,v,t}} 
&
c_{e} c_{b}c_{b'}
  \ar@2[dr] ^{\alpha_{e,b}c_{b'}}
\\
c_xc_vc_t
  \ar@2[ur] ^{\alpha_{x,v}c_t}
  \ar@2[dr] _{c_x\alpha_{v,t}}
&&&
c_{a}c_{d} c_{b'}
\\
&
c_xc_{w}c_{w'}
  \ar@2[r] _{\alpha_{x,w}c_{w'}}
&
c_{a}c_{a'}c_{w'}
 \ar@2[ur] _{c_{a}\alpha_{a',w'}}
}
\]
with \typetroisbasep{$x$}{$v$}{$t$}{22} and where the homotopical reduction $R_{\Gamma_3}$ eliminates a collapsible part $\Gamma_3$ of $\overline{\Colo}_3(n)$. In this way, we prove that

\begin{quote}
{\bf Theorem \ref{Theorem:PreColo3Coherent}.}
\emph{
For $n > 0$, the $(3,1)$-polygraph $\PreColo_3(n)$ is a coherent presentation of the monoid~$\P_n$.
}
\end{quote}

\noindent For instance, the coherent presentation $\Colo_3(2)$ has only one $3$-cell
\[
\xymatrix @C=3em @R=0.6em {
& {c_{21}c_{21}}
\ar@3 []!<-10pt,-10pt>;[dd]!<-10pt,+10pt> ^{C'_{2,1,21}}
\\
{c_2c_1c_{21}}
	\ar@2@/^/ [ur] ^-{\alpha_{2,1}c_{21}}
	\ar@2@/_/ [dr] _-{c_{2}\alpha_{1,21}}
\\
& {c_2c_{21}c_1}
	\ar@2 [r] _-{\alpha_{2,21}c_1}
& {c_{21}c_2c_1}
	\ar@2 [luu] _-{c_{21}\alpha_{2,1}}
}
\] 
In this case, the $(3,1)$-polygraphs $\PreColo_3(2)$ and $\Colo_3(2)$ coincide.
We give in~\ref{SubSection:Computations} the values of number of cells of the $(3,1)$-polygraphs~$\overline{\Colo}_3(n)$ and~$\PreColo_3(n)$  for plactic monoids of rank~$n\leq 10$. 

\subsubsection*{Knuth's coherent presentation}

In a final step, we reduce in \ref{Section:KnuthCoherentPresentation} the coherent presentation $\PreColo_3(n)$ into a coherent presentation of the monoid $\P_n$ whose underlying $2$-polygraph is $\Knuth_2(n)$. We define an extended presentation $\Knuth_{3}(n)$ of the monoid $\P_{n}$ obtained from $\Knuth_{2}(n)$ by adjunction of  the following set of $3$-cells
\[
\{\,
\mathcal{R}(C'_{x,v,t})\;\;\big|\;\;\typetroisbasep{$x$}{$v$}{$t$}{12}
\,\}
\;\cup\;
\{\,
\mathcal{R}(D_{x,v,t})\;\;\big|\;\;\typetroisbasep{$x$}{$v$}{$t$}{22}
\,\},
\]
where $\mathcal{R} :\tck{\overline{\Colo}_3(n)}\fl\tck{\Knuthcc_3(n)}$ is a Tietze transformation constructed throughout Section~\ref{Section:ReductionCoherentPresentation}.
We obtain our main result:

\begin{quote}
{\bf Theorem \ref{KnuthcoherentTheorem}.}
\emph{
For $n > 0$, the $(3,1)$-polygraph $\Knuth_3(n)$ is a coherent presentation of the monoid~$\P_n$.
}
\end{quote}

For instance, the Knuth coherent presentation of the monoid \pdf{\P_2} has generators $c_1$ and $c_2$ subject to the Knuth relations $\eta_{1,1,2}^c : c_2c_1c_1 \dfl c_1c_2c_1$ and $\epsilon_{1,2,2}^c : c_2c_2c_1 \dfl c_2c_1c_2$ and the following~$3$-cell
\[
\xymatrix@!C@C=3em{
c_2c_2c_1c_1
  \ar@2@/^3ex/ [r] ^{2\eta_{1,1,2}^c}="src"
  \ar@2@/_3ex/ [r] _{\epsilon_{1,2,2}^{c}1}="tgt"
& c_2c_1c_2c_1
    \ar@3 "src"!<-5pt,-15pt>;"tgt"!<-5pt,15pt>  ^-{\, C''}
}
\]
Note that the Knuth coherent presentation of the monoid $\P_2$ corresponds to the coherent presentation that one can compute directly using the fact that the $2$-polygraph $\Knuth_2(2)$ is convergent.

\subsubsection*{Coherence and Lakshmibai-Seshadri's paths}

The plactic monoid admits a description in terms of the Kashiwara theory of crystal bases,~\cite{DateJimboMiwa90, Kashiwara91, KashiwaraNakashima94, Kashiwara94}, and the Littelmann path model,~\cite{Littelmann96}. In a last part of the paper, we compute a coherent presentation of the monoid~$\P_n$ using these two approaches. 
In Section~\ref{CoherenceLakshmibaiSeshadriPaths}, we construct a convergent presentation of the monoid~$\P_n$ using the notions of tableaux and Yamanouchi paths. In this case, the plactic congruence~$\sim_{\textrm{path(n)}}$ is defined in terms of a crystal isomorphism,~see~\ref{Equivalenceonpaths}. 
We recall in~\ref{Paths and crystal graphs}, the notion of paths from~\cite{Littelmann96}. Consider~$\mathbb{R}^{n}$ with its canonical basis~$(\epsilon_1,\ldots, \epsilon_n)$. A \emph{path} is a piecewise linear continuous map
$\pi: [0,1]\fll X\otimes_{\mathbb{Z}}\mathbb{R}$ where $X$ the lattice $\mathbb{Z}\epsilon_1\oplus\ldots\oplus\mathbb{Z}\epsilon_n$. Denote by $\pi_{\epsilon_i}$ the path that connects the origin with~$\epsilon_i$ by a straight line.

\subsubsection*{$2$-polygraph of crystals}

In~\ref{Subsection:TableauxLSpaths}, we recall  the notion of tableaux, Lakshmibai-Seshadri's paths and Yamanouchi's paths from~\cite{Littelmann94,Littelmann96,Lothaire02,Shimozono05}.
Using these notions, we define in~\ref{2polygraphofcrystals} the \emph{$2$-polygraph of crystals} as follows. Consider the $2$-polygraph~$\Crys^{0}_2(n)$ whose $1$-cells are $\pi_{\epsilon_1},\ldots \pi_{\epsilon_n}$ and whose $2$-cells are of the form~$\vartheta_{\pi_w}:\pi_{w} \dfl Y(\pi_{w})$, where~$\pi_w$ is a non-Yamanouchi path tableau and~$Y(\pi_{w})$ is its corresponding Yamanouchi path tableau such that~\mbox{$\pi_{w}(1) = Y(\pi_{w})(1)$}. 
For~$k\geq 0$, we define the $2$-polygraph~$\Crys^{k}_2(n)$ whose $1$-cells are $\pi_{\epsilon_1},\ldots \pi_{\epsilon_n}$ and whose $2$-cells are of the form $\vartheta_{\pi_w}^{\alpha_{j_{k}}}:f_{\alpha_{j_{k}}}( \pi_{w})\dfl 
f_{\alpha_{j_{k}}}(Y(\pi_{w}))$, where~$\pi_{w}$ and~$Y(\pi_{w})$ are respectively non-Yamanouchi and Yamanouchi paths tableaux such that~\mbox{$\pi_{w}(1) = Y(\pi_{w})(1)$} and where~$f_{\alpha_{j_{k}}}$ is a composite of roots operators~$f_{\alpha_i}$, defined in~\ref{Rootoperators}.
We define the \emph{$2$-polygraph of crystals}~$\Crys_2(n)$ as the union
$\underset{i\geq 0}{\cup}\Crys^{i}_2(n)$. We prove that

\begin{quote}
{\bf Theorem \ref{Crystaltheorem}.}
\emph{For $n>0$, the $2$-polygraph~$\Crys_2(n)$ is a convergent presentation of the monoid~$\P_n$.
}
\end{quote}

\subsubsection*{Path coherent presentation}

Finally, we end Section~\ref{CoherenceLakshmibaiSeshadriPaths} by constructing a coherent presentation of the monoid~$\P_n$ in terms of Lakshmibai-Seshadri's paths. We consider the $1$-polygraph~$\Path_1(n)$  with only one $0$-cell and whose $1$-cells are all Lakshmibai-Seshadri's paths.
For each pair $(\pi_u, \pi_{v})$ in $\Path_1(n)$  such that $\pi_u\star\pi_{v}$ is not a tableau, we define a $2$-cell~
$\alpha_{\pi_u,\pi_{v}} : \pi_u\star\pi_{v}\dfl \pi_w\star\pi_{w'}$, where $\pi_w\star\pi_{w'}$ is the unique tableau such  that $\pi_u\star\pi_{v}\sim_{\textrm{path(n)}} \pi_w\star\pi_{w'}$.
The \emph{$2$-polygraph of paths}, denoted by $\Path_2(n)$, is the  $1$-polygraph $\Path_1(n)$ extended by the set of $2$-cells  $\alpha_{\pi_u,\pi_{v}}$, where  $\pi_u$ and $\pi_{v}$ are in $\Path_1(n)$  such that~$\pi_u\star\pi_{v}$ is not a tableau.
Then we consider  the extended presentation~$\Path_3(n)$  of the monoid~$\P_n$ obtained from~$\Path_2(n)$ by adjunction of the following $3$-cell 
\[
\xymatrix @!C @C=2.3em @R=0.6em {
&
\pi_{e}\star \pi_{e'}\star\pi_t
  \ar@2[r] ^{\pi_{e}\alpha_{\pi_{e'},\pi_t}}
     \ar@3 []!<45pt,-8pt>;[dd]!<45pt,8pt> ^{}
&
\pi_{e}\star \pi_{b}\star \pi_{b'}
  \ar@2[dr] ^{\alpha_{\pi_e,\pi_b}\pi_{b'}}
\\
\pi_u\star \pi_v\star \pi_t
  \ar@2[ur] ^{\alpha_{\pi_u,\pi_v}\pi_t}
  \ar@2[dr] _{\pi_u\alpha_{\pi_v,\pi_t}}
&&&
\pi_{a}\star\pi_{d}\star \pi_{b'}
\\
&
\pi_u\star\pi_{w}\star\pi_{w'}
  \ar@2[r] _{\alpha_{\pi_u,\pi_w}\pi_{w'}}
&
\pi_{a}\star \pi_{a'}\star\pi_{w'}
 \ar@2[ur] _{\pi_{a}\alpha_{\pi_{a'},\pi_{w'}}}
}	
\]
where the paths $\pi_u\star\pi_v$ and~$\pi_v\star\pi_t$ are not tableaux. As a consequence of Theorem~\ref{MainTheoremA}, we deduce that the $(3,1)$-polygraph $\Path_3(n)$ is a coherent presentation of the monoid~$\P_n$.

\subsubsection*{Applications and perspectives}

In~\cite{GaussentGuiraudMalbos14}, a description of the category of actions of a monoid on categories is given in terms of coherent presentations.
Using this description, Theorem~\ref{KnuthcoherentTheorem} allows to present actions of plactic monoids on categories as follows. The category $\mathrm{Act}(\P_n)$ of actions of the monoid~$\P_n$ on categories is equivalent to the category of $2$-functors from the $(2,1)$-category $\tck{\Knuth_2(n)}$ to the category~$\Cat$ of categories, that sends the $3$-cells of $\Knuth_3(n)$ to commutative diagrams in~$\Cat$. 
One potential application is the actions of the plactic monoid in the category of finite dimensional representations of the general Lie algebra or in the category~$\mathcal{O}$ of finite and infinite dimensional representations of the general Lie algebra,~\cite{Humphreys08}.

Following~\cite{GuiraudMalbos12advances}, a convergent presentation of a monoid can be extended into a polygraphic resolution of the monoid, that is, a cofibrant replacement of the monoid in the category of $(\infty,1)$-categories.
The column presentation $\Colo_2(n)$ of the monoid~$\P_n$ can then be extended into a polygraphic resolution whose $n$-cells, for every $n\geq 3$, are indexed by $(n-1)$-fold branching of $\Colo_2(n)$.
We can explicit the $4$-cells of this resolution, which correspond to the confluence diagrams induced by critical triple branchings with source $c_{u}c_{v}c_tc_e$ for all columns~$u$, $v$, $t$ and~$e$ such that~\typedeuxbase{$u$}{$v$}{0}, \typedeuxbase{$v$}{$t$}{0} and~\typedeuxbase{$t$}{$e$}{0}. These $4$-cells have a permutohedral form. More generally, one may conjecture that the generating $n$-cells of the resolution have the form of the permutohedron of order $n$ corresponding to a confluence diagram of $(n-1)$ overlapping reductions. This construction should generalise the construction of the Anick resolution for the monoid~$\P_n$ starting with the column presentation, given by Lopatkin in~\cite{Lopatkin16}.

Finally, by extending our construction to plactic monoids of other types, we expect other potential applications in representation theory. In particular, our approach for plactic monoids of type~$A$ could be applied for computations of finite coherent presentations for the plactic monoids of type~$B$,~$C$ and~$D$. The column rules for the type~$A$ are defined by the Schensted insertion algorithm. For the other types, the column rules are defined by Lecouvey's insertion algorithm,~\cite{Lecouvey02,Lecouvey03}, as was shown in~\cite{Hage14}. We expect that the syzygies for the classical types have an hexagonal form as shown for type~A. Finally, this question is more difficult for the exceptional types that we are not able to give a conjectural form.

\pagebreak

\section{Column presentation of plactic monoids}
\label{Section:ColumnPresentationofplacticmonoids}

In this work, rewriting methods are presented in the language of polygraphs, that we recall in this section. We refer the reader to \cite{GuiraudMalbos12advances} and \cite{GuiraudMalbos14} for a deeper presentation.

\subsection{Presentations of monoids by two-dimensional polygraphs}
\label{Subsection:PresentationBy2Polygraph}

\subsubsection{Two-dimensional polygraphs}

A \emph{$1$-polygraph} is a directed graph
\[
\xymatrix @C=4.2em @R=2.8em{
\Sigma_0
& \Sigma_1
	\ar@<.45ex> [l] ^{t_0}
	\ar@<-.45ex> [l] _{s_0}
}
\]
given by a set $\Sigma_0$ of \emph{$0$-cells}, a set $\Sigma_1$ of \emph{$1$-cells} together with two maps $s_0$ and $t_0$ sending a $1$-cell~$x$ on its \emph{source} $s_0(x)$ and its \emph{target} $t_0(x)$.
We will denote by $\Sigma_1^\ast$ the free category generated by the \mbox{$1$-polygraph}~$(\Sigma_0,\Sigma_1)$. Its set of $0$-cells is $\Sigma_0$ and for any $0$-cells $p$ and $q$, the $1$-cells of the hom-set~$\Sigma_1^\ast(p,q)$ are paths from $p$ to $q$ in the $1$-polygraph $(\Sigma_0,\Sigma_1)$. The composition is the concatenation of paths and the identity on a $0$-cell $p$ is the empty path with source and target $p$.   
A \emph{globular extension} of the free category $\Sigma_1^\ast$ is a set $\Sigma_2$ equipped with two maps 
\[
\xymatrix @C=4.2em @R=2.8em{
\Sigma_1^\ast
& \Sigma_2
	\ar@<.45ex> [l] ^{t_1}
	\ar@<-.45ex> [l] _{s_1}
}
\]
such that, for every $\beta$ in $\Sigma_2$, the pair $(s_1(\beta),t_1(\beta))$ is a \emph{$1$-sphere} in the category $\Sigma_1^\ast$, that is, 
\[
s_0s_1(\beta)=s_0t_1(\beta)
\quad\text{and}\quad
t_0s_1(\beta)=t_0t_1(\beta).
\] 
An element of the globular extension $\Sigma_2$ can be represented by a $2$-cell with the following globular shape
\[
\xymatrix @C=3.5em {
p
	\ar @/^3ex/ [r] ^-{u} _-{}="src"
	\ar @/_3ex/ [r] _-{v} ^-{}="tgt"
	\ar@2 "src"!<-2.5pt,-10pt>;"tgt"!<-2.5pt,10pt> ^-*+{\beta}
&q
}
\]
that relates parallel $1$-cells $u$ and $v$ of $\Sigma_1^\ast$.
A \emph{$2$-polygraph} $\Sigma$ is a triple $(\Sigma_0,\Sigma_1,\Sigma_2)$, where $(\Sigma_0,\Sigma_1)$ is a $1$-polygraph and $\Sigma_2$ is a globular extension of the free category $\Sigma_1^\ast$.
The elements of~$\Sigma_2$ are called the \emph{$2$-cells} of the $2$-polygraph~$\Sigma$, or the \emph{rewriting rules} defined by $\Sigma$.
If there is no possible confusion,~$\Sigma_2$ will denote the set of $2$-cells of the $2$-polygraph $\Sigma$ or the $2$-polygraph itself.
A \emph{$2$-category} is a category enriched in categories. When two $1$-cells, or $2$-cells, $f$ and~$g$ of a $2$-category are $i$-composable, for~\mbox{$i=0,1$}, that is~$t_i(f) = s_i(g)$, we denote by $f\star_i g$ their $i$-composite. A \emph{$(2,1)$-category} is a category enriched in groupoid, that is a $2$-category whose $2$-cells are invertible for the $1$-composition. We will denote by $\Sigma_2^\ast$ (resp. $\tck{\Sigma}_2$) the $2$-category (resp. $(2,1)$-category) freely generated by the $2$-polygraph $\Sigma$. We refer the reader to {\cite[Section 2.4.]{GuiraudMalbos14}} for expended definitions of $2$-categories and constructions of the $2$-categories~$\Sigma_2^\ast$ and~$\Sigma_2^\top$.

In this article, we deal with rewriting in monoids, that is, categories with only one $0$-cell, so that the set~$\Sigma_0$ is reduced to a set with exactly one element denoted $\bullet$. In this case, the $1$-polygraph $(\Sigma_0,\Sigma_1)$ will be identified to a set $\Sigma_1$ and $\Sigma_1^\ast$ will be identified to the free monoid on $\Sigma_1$.

\subsubsection{Presentations of monoids by \pdf{2}-polygraphs}

The \emph{monoid presented by a $2$-polygraph $\Sigma$}, denoted by $\cl{\Sigma}$, is defined as the quotient of the free monoid~$\Sigma_1^\ast$ by the relations $s_1(\beta)=t_1(\beta)$, for every $2$-cell $\beta$ of~$\Sigma_2^\ast$.
A presentation of a monoid $\M$ is a $2$-polygraph whose presented monoid is isomorphic to~$\M$. Two $2$-polygraphs are \emph{Tietze equivalent} if they present isomorphic monoids.

\subsubsection{Tietze transformations of \pdf{2}-polygraphs}
\label{Subsubsection:TietzeTransformations2Polygraphs}

A $2$-cell~$\beta$ of a $2$-polygraph~$\Sigma$ is \emph{collapsible}, if~$t_1(\beta)$ is a~$1$-cell of $\Sigma_1$ and the $1$-cell $s_1(\beta)$ does not contain $t_1(\beta)$.
The target of a collapsible $2$-cell is a \emph{redundant} $1$-cell.
Tietze transformations were introduced in group theory in order to transform a presentation of a group into a presentation of the same group by adding or removing generators and rules,~\cite{Tietze08}. This notion can be defined for $2$-polygraphs.
Recall from~{\cite[2.1.1.]{GaussentGuiraudMalbos14}}, that an \emph{elementary Tietze transformation} of a $2$-polygraph $\Sigma$ is a $2$-functor with domain $\tck{\Sigma}_2$ that belongs to one of the following four transformations:
\begin{enumerate}[{\bf i)}]
\item adjunction
$\iota_{\beta}^1 : \tck{\Sigma}_2 \fl \tck{\Sigma}_2[x](\beta)$
of a redundant $1$-cell $x$ with its collapsible $2$-cell $\beta$:
\[
\xymatrix@C=3.5em{
\bullet
	\ar [r] ^-{u} 
& \bullet
&
   \ar@{|~>} [r] ^{\iota_{\beta}^1}
&
& \bullet
	\ar@/^3ex/ [r] ^-{u} ^{}="src"
	\ar@/_3ex/ [r] _-{x} ^{}="tgt"
	\ar@2 "src"!<0pt,-10pt>;"tgt"!<0pt,10pt> ^-{\beta}
& \bullet
}
\]
\item elimination 
$\pi_\beta : \tck{\Sigma}_2 \fl 
\tck{(\Sigma_1\setminus\{x\},\Sigma_2\setminus\{\beta\})}$ of a redundant $1$-cell $x$ with its collapsible $2$-cell $\beta$:
\[
\xymatrix@C=3.5em{
\bullet
	\ar@/^3ex/ [r] ^-{u} ^{}="src"
	\ar@/_3ex/ [r] _-{x} ^{}="tgt"
	\ar@2 "src"!<0pt,-10pt>;"tgt"!<0pt,10pt> ^-{\beta}
& \bullet
&
   \ar@{|~>} [r] ^{\pi_\beta}
&
&
\bullet
	\ar [r] ^-{u} 
& \bullet
}
\]
which maps $x$ to $u$ and the $2$-cell $\beta$ to $1_u$ and being identity on the others cells,
\item adjunction $\iota_\beta : \tck{\Sigma}_2 \fl \tck{\Sigma}_2(\beta)$ of a redundant $2$-cell $\beta$:
\[
\xymatrix@C=3.5em{
\bullet
	\ar@/^3ex/ [r] ^-{} ^{}="src"
	\ar@/_3ex/ [r] _-{} ^{}="tgt"
	\ar@2 "src"!<0pt,-10pt>;"tgt"!<0pt,10pt> ^-{\gamma}
& \bullet
&
   \ar@{|~>} [r] ^{\iota_\beta}
&
&
\bullet
	\ar@/^3ex/ [r] ^-{} ^{}="src"
	\ar@/_3ex/ [r] _-{} ^{}="tgt"
	\ar@2 "src"!<-11pt,-10pt>;"tgt"!<-11pt,10pt> ^-{\gamma}
	\ar@2 "src"!<7pt,-10pt>;"tgt"!<7pt,10pt> ^-{\beta}
& \bullet
}
\]
\item elimination $\pi_{(\gamma,\beta)} : \tck{\Sigma}_2 \fl \tck{\Sigma}_2/(\gamma,\beta)$ of a redundant $2$-cell $\beta$:
\[
\xymatrix@C=3.5em{
\bullet
	\ar@/^3ex/ [r] ^-{} ^{}="src"
	\ar@/_3ex/ [r] _-{} ^{}="tgt"
	\ar@2 "src"!<-11pt,-10pt>;"tgt"!<-11pt,10pt> ^-{\gamma}
	\ar@2 "src"!<7pt,-10pt>;"tgt"!<7pt,10pt> ^-{\beta}
& \bullet
&
   \ar@{|~>} [r] ^{\pi_{(\gamma,\beta)}}
&
&
\bullet
	\ar@/^3ex/ [r] ^-{} ^{}="src"
	\ar@/_3ex/ [r] _-{} ^{}="tgt"
	\ar@2 "src"!<0pt,-10pt>;"tgt"!<0pt,10pt> ^-{\gamma}
& \bullet
}
\] 
\end{enumerate}
If $\Sigma$ and $\Upsilon$ are $2$-polygraphs, a \emph{Tietze transformation} from $\Sigma$ to $\Upsilon$ is a $2$-functor $F : \tck{\Sigma} \fl \tck{\Upsilon}$ that decomposes into sequence of elementary Tietze transformations.
Two $2$-polygraphs are Tietze equivalent if, and only if, there exists a Tietze transformation between them~{\cite[Theorem 2.1.3.]{GaussentGuiraudMalbos14}}.

\subsubsection{Nielsen transformation}

Recall the notion of Nielsen transformation from~{\cite[2.1.4.]{GaussentGuiraudMalbos14}}.
Given a $2$-polygraph $\Sigma$ and a $2$-cell 
\[
u_1 \odfl{\gamma_1} u \odfl{\gamma} v \odfl{\gamma_2} v_2
\] 
in $\tck{\Sigma}_2$, the \emph{Nielsen transformation} $\kappa_{\gamma \leftarrow \beta}$ is the Tietze transformation that replaces in the $(2,1)$-category~$\tck{\Sigma}_2$ the $2$-cell $\gamma$ by a $2$-cell $\beta : u_1 \dfl v_2$. The transformation $\kappa_{\gamma \leftarrow \beta}$ can be decomposed into the following composition of elementary Tietze transformations:
\[
\tck{\Sigma}_2 \ofl{\iota_\beta} \tck{\Sigma}_2(\beta) 
\overset{\pi_{(\gamma_1^-\star_1\beta\star_1\gamma_2^-,\gamma)}}{\longrightarrow} \tck{\Sigma}_2/(\gamma_1^-\star_1\beta\star_1\gamma_2^-,\gamma).
\]
When $\gamma_2$ is identity, we will denote by $\kappa_{\gamma \leftarrow \beta}^{'}$ the Nielsen transformation which, given a $2$-cell~\mbox{$u_1\odfl{\gamma_1} u\odfl{\gamma} v$} in $\tck{\Sigma}_2$, replaces the $2$-cell $\gamma$ by a $2$-cell $\beta: u_{1}\dfl v$.

\subsubsection{Rewriting sequences}

A \emph{rewriting step} of a $2$-polygraph~$\Sigma$ is a $2$-cell of~$\Sigma_2^*$ with shape
\[
\xymatrix@C=3.5em{
\bullet
	\ar [r] ^-{w} 
& \bullet
	\ar@/^3ex/ [r] ^-{u} ^{}="src"
	\ar@/_3ex/ [r] _-{v} ^{}="tgt"
	\ar@2 "src"!<0pt,-10pt>;"tgt"!<0pt,10pt> ^-{\beta}
& \bullet
	\ar [r] ^-{w'}
& \bullet
}
\]
where $\beta$ is a $2$-cell of~$\Sigma_2$ and~$w$ and~$w'$ are $1$-cells of~$\Sigma_1^*$. A \emph{rewriting sequence} of~$\Sigma$ is a finite or infinite sequence 
\[
\xymatrix@C=1.6em{
{u_1}
	\ar@2 [r] ^-{}
& {u_2}
	\ar@2 [r] ^-{}
& \;\cdots\;
	\ar@2 [r] ^-{}
& {u_n}
	\ar@2 [r] ^-{}
& \cdots
}
\]
of rewriting steps. If~$\Sigma$ has a rewriting sequence from~$u$ to~$v$, we say that \emph{$u$ rewrites into~$v$}. A $1$-cell~$u$ of~$\Sigma_1^*$ is a \emph{normal form} if there is no rewriting step with source~$u$.
The $2$-polygraph~$\Sigma$ \emph{terminates} if it has no infinite rewriting sequence. In that case, every $1$-cell of~$\Sigma_1^*$ has at least one normal form.

\subsubsection{Branchings}

A \emph{branching} of the $2$-polygraph~$\Sigma$ is a non ordered pair~$(f,g)$ of $2$-cells of~$\Sigma_2^*$ with a common source, that is~$s_1(f) = s_1(g)$. 
A branching~$(f,g)$ is \emph{local} if~$f$ and~$g$ are rewriting steps. A branching is \emph{aspherical} if it is of the form~$(f,f)$, for a rewriting step $f$ and \emph{Peiffer} when it is of the form~$(fv,ug)$ for rewriting steps $f$ and $g$ with $s_1(f)=u$ and $s_1(g)=v$. The \emph{overlapping} branchings are the remaining local branchings.
Local branchings are ordered by the order~$\sqsubseteq$ generated by the relations
\[
(f,g) \:\sqsubseteq\: \big( w f w', w g w')
\]
given for any local branching~$(f,g)$ and any possible $1$-cells~$w$ and~$w'$ of the category~$\Sigma_1^*$. An overlapping local branching that is minimal for the order~$\sqsubseteq$ is called a \emph{critical branching}.

\subsubsection{Confluence}

A branching $(f,g)$  is \emph{confluent} if there exist $2$-cells~$f'$ and~$g'$ in~$\Sigma_2^*$, as in the following diagram: 
\[
\xymatrix @R=0.6em @C=3em{
& {v}
	\ar@2@/^/ [dr] ^-{f'}
\\
{u}
	\ar@2@/^/ [ur] ^-{f}
	\ar@2@/_/ [dr] _-{g}
&& {u'}
\\
& {w}
	\ar@2@/_/ [ur] _-{g'}
}
\]
We say that a $2$-polygraph~$\Sigma$ is \emph{confluent} (resp.\ \emph{locally confluent}) if all of its branchings (resp.\ local branchings) are confluent.  If~$\Sigma$ is confluent, every $1$-cell of~$\Sigma^*$ has at most one normal form. The critical branching Lemma, {\cite[Theorem 3.1.5.]{GuiraudMalbos14}}, states that a $2$-polygraph is locally confluent if and only if all its critical branchings are confluent. The Newman Lemma, {\cite[Theorem 3.1.6.]{GuiraudMalbos14}}, states that for terminating $2$-polygraphs, local confluence and confluence are equivalent properties.

\subsubsection{Convergence}

A $2$-polygraph~$\Sigma$ is \emph{convergent} if it terminates and it is confluent. Such a~$\Sigma$ is called a \emph{convergent presentation of any monoid isomorphic to~$\cl{\Sigma}$}. In that case, every $1$-cell~$u$ of~$\Sigma_1^*$ has a unique normal form.

\subsection{Plactic monoids}
\label{placticmonoids}

We recall the definition and properties of plactic monoids. We refer the reader to~\cite{Lothaire02} and~\cite{Fulton97} for a full introduction to the plactic structure and tableaux.

\subsubsection{Rows, columns and tableaux}

For a natural number $n>0$, we denote by $[n]$ the finite \linebreak set~$\{1,2,\ldots,n\}$ totally ordered by $1<2<\ldots<n$.
A \emph{row} is a non-decreasing $1$-cell $x_1\ldots x_k$ in the free monoid $[n]^\ast$, \emph{i.e.}, with $x_{i}\leq x_{i+1}$  for $1\leq i\leq k-1$.
A \emph{column} is a decreasing $1$-cell $x_p\ldots x_1$ in the free monoid~$[n]^\ast$, \emph{i.e.}, with $x_{i+1}>x_{i}$,  for $1\leq i \leq p-1$. We will denote by $\col(n)$ the set of non-empty columns in $[n]^\ast$.
We denote by $\len(w)$ the length of a $1$-cell $w$ and we denote by $\lnds(w)$ the length of the longest non-decreasing subsequence in $w$.

A row  $x_1\ldots x_k$ \emph{dominates} a row $y_1 \ldots y_l$, and we denote $x_1\ldots x_k\vartriangleright y_1 \ldots y_l$, if $k\leq l$ and $x_{i}>y_{i}$, for $1\leq i \leq k$.
Any $1$-cell $w$ in $[n]^\ast$ has a unique decomposition as a product of rows of maximal length~$w=u_{1}\ldots u_{k}$. Such a $1$-cell $w$ is a \emph{(semistandard) tableau}  if $u_{1}\vartriangleright u_{2} \vartriangleright \ldots \vartriangleright u_{k}$. It is usual to write tableaux in a planar form, with the rows placed in order of domination from bottom to top and left-justified as in \cite{Fulton97}. For example, the $1$-cells $13123$ and $23412$ are not tableaux and the $1$-cell~$6745662233461112234$ is a tableau whose planar representation is
\begin{equation}
\label{Example:Tableau}
\young(1112234,223346,4566,67)
\end{equation}

The \emph{column reading} of the planar representation of a tableau $w$ constructs a $1$-cell, denoted by $C(w)$, obtained by reading the planar representation of $w$ column-wise from bottom to top and from left to right. For example, the column reading of the tableau (\ref{Example:Tableau}) is $6421752163163242634$.

\subsubsection{Total orders on columns}

We will denote by $\ordrecoldeglex$ the total order on $\col(n)$ defined by \mbox{$u\ordrecoldeglex v$} if 
\[
\len(u)<\len(v) \quad \text{or} \quad \big(\;\len(u)=\len(v) \text{ and } u<_{lex} v\;\big),
\]
for all $u$ and $v$ in $\col(n)$, where~$<_{\text{lex}}$ denotes the lexicographic order on $[n]^\ast$ induced by the total order on~$[n]$.
We will denote by $\ordrecolrev$ the total order on $\col(n)$ defined by $u\ordrecolrev v$ if
\[
\len(u)>\len(v)\quad  \text{or}\quad  \big(\;\len(u)= \len(v)  \text{ and } u<_{\text{lex}} v\;\big),
\]
for all $u$ and $v$ in $\col(n)$.

\subsubsection{Schensted's algorithm}
\label{Algorithm:Schensted}

The \emph{Schensted algorithm} computes for each $1$-cell $w$ in the free monoid~$[n]^\ast$ a tableau denoted by $P(w)$, called the \emph{Schensted tableau} of $w$ and constructed as follows, \cite{Schensted61}.
Given $w$ a tableau written as a product of rows of maximal length $w=u_1\ldots u_k$ and $y$ in~$[n]$, it computes the tableau $P(wy)$ as follows: 
\begin{enumerate}[{\bf i)}]
\item if $u_{k}y$ is a row, the result is $u_{1}\ldots u_{k}y$ ;
\item if $u_{k}y$ is not a row, then suppose $u_{k}=x_{1}\ldots x_{l}$ with $x_{i}$ in $[n]$ and let $j$ minimal such that $x_{j}>y$, 
then the result is $P(u_{1}\ldots u_{k-1}x_{j})v_{k}$ where 
$v_{k}=x_{1}\ldots x_{j-1}yx_{j+1}\ldots x_{l}$.
\end{enumerate}

Given a $1$-cell $w$, the tableau $P(w)$ is computed by starting with the empty tableau, corresponding to the empty $1$-cell, and iteratively applying the Schensted algorithm. In other words, $P(w)$ is the row reading of the planar representation of the tableau computed by the Schensted algorithm.
The number of columns in $P(w)$ is equal to $\lnds(w)$, \cite{Schensted61}. In particular, if $P(w)$ consists of one column, then the $1$-cell~$w$ is a column.
Finally, note that if $w$ is a tableau, then $P(w)=w$ holds in~$[n]^\ast$. 

\subsubsection{Plactic monoids}
\label{Placticcongruence}

We will denote by $\sim_{\mathrm{plax}(n)}$ the equivalence relation on the free monoid $[n]^\ast$ defined by  $u\sim_{\mathrm{plax}(n)} v$ if $P(u) = P(v)$ in $[n]^\ast$.
The \emph{plactic monoid of rank $n$}, denoted by $\P_n$, is the quotient of the free monoid $[n]^\ast$ by the congruence $\sim_{\mathrm{plax}(n)}$.

\subsubsection{Knuth's $2$-polygraph and the plactic congruence}

The \emph{Knuth $2$-polygraph of rank $n$} is the 
$2$-polygraph, denoted by~$\Knuth_2(n)$, whose set of $1$-cells is $[n]$ and the set of $2$-cells is 
\begin{equation}
\label{KnuthRelations}
\{\;
zxy \odfl{\eta_{x,y,z}} xzy \;|\; 1\leq x\leq y < z \leq n
\;\}\,\cup\,\{\;
yzx \odfl{\epsilon_{x,y,z}} yxz \;|\; 1\leq x<y\leq z\leq n
\;\}.
\end{equation}
These $2$-cells correspond to the \emph{Knuth relations} defined in \cite{Knuth70} with an orientation compatible with the lexicographic order $<_{\mathrm{lex}}$.
The congruence on the free monoid $[n]^\ast$ generated by the $2$-polygraph~$\Knuth_2(n)$ is called the \emph{plactic congruence of rank $n$}.
Knuth showed in~\cite{Knuth70}  that for any $u$ and $v$ in $[n]^\ast$, we have~$u\sim_{\mathrm{plax}(n)} v$ if and only if $u$ and $v$ are equal modulo the plactic congruence.

\begin{proposition}[{\cite[Theorem 6]{Knuth70}}]
The $2$-polygraph $\Knuth_2(n)$ is a presentation of the monoid $\P_n$.
\end{proposition}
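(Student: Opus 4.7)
The plan is to prove that the congruence on $[n]^\ast$ generated by $\Knuth_2(n)$, which I will denote $\equiv_K$, coincides with the relation $\sim_{\mathrm{plax}(n)}$. Since by definition $[n]^\ast/{\equiv_K}$ is the monoid presented by $\Knuth_2(n)$ and $\P_n = [n]^\ast/{\sim_{\mathrm{plax}(n)}}$, this amounts to establishing both inclusions between these two congruences.

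For the easy direction, that $u \equiv_K v$ implies $P(u) = P(v)$, it suffices to check that each generating Knuth relation preserves the Schensted tableau. I would apply the algorithm of~\ref{Algorithm:Schensted} to the three-letter words $zxy$ and $xzy$ for $1\leq x\leq y<z\leq n$, and to $yzx$ and $yxz$ for $1\leq x<y\leq z\leq n$, and verify that the resulting two-row planar tableaux agree in each case. Since $P$ is computed by successive letter insertions and the intermediate tableau at each step depends only on the partial word read so far, these local equalities extend to $P(u\,zxy\,v) = P(u\,xzy\,v)$ and $P(u\,yzx\,v) = P(u\,yxz\,v)$ for all $u, v \in [n]^\ast$, so $\equiv_K$ is contained in $\sim_{\mathrm{plax}(n)}$.

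The reverse inclusion reduces to the statement that every word $w$ is Knuth-equivalent to its Schensted tableau $P(w)$: granting this, $P(u) = P(v)$ implies $u \equiv_K P(u) = P(v) \equiv_K v$. I would proceed by induction on $\len(w)$: writing $w = w'y$ and applying the induction hypothesis gives $w \equiv_K P(w')\,y$, so it remains to show that $P(w')\,y \equiv_K P\bigl(P(w')\,y\bigr)$, that is, that a single Schensted insertion is realisable by Knuth rewritings. This bumping step is the main obstacle. Concretely, if $P(w') = u_1\ldots u_k$ with $u_k = x_1\ldots x_l$ and $x_j$ is the smallest entry strictly greater than $y$, one must witness the transformation $u_1\ldots u_{k-1}\,x_1\ldots x_l\,y \,\equiv_K\, u_1\ldots u_{k-1}\,x_j\,x_1\ldots x_{j-1}\,y\,x_{j+1}\ldots x_l$ as a composition of the relations $\eta_{x,y,z}$ and $\epsilon_{x,y,z}$, and then iterate the argument on the insertion of the bumped letter $x_j$ into the dominating row $u_{k-1}$. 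The delicate combinatorial point is that the two relation families must be interleaved precisely: relations $\eta_{x,y,z}$ transport the bumped entry across letters of the receiving row, while relations $\epsilon_{x,y,z}$ rearrange letters within that row, and the dominance condition $u_{k-1}\vartriangleright u_k$ is exactly what guarantees that at each elementary step some applicable Knuth relation is available.
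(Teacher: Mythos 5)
The paper does not prove this proposition at all: it is imported verbatim from Knuth's Theorem~6 (the paragraph preceding it simply records that $\sim_{\mathrm{plax}(n)}$ coincides with the congruence generated by the Knuth relations, with a citation), so there is no internal argument to measure yours against. Your outline is exactly the classical proof, as found in \cite{Knuth70} and in Lothaire, Chapter~5, and its structure is sound: the inclusion of $\equiv_K$ in $\sim_{\mathrm{plax}(n)}$ is a finite verification on the three-letter sources of $\eta_{x,y,z}$ and $\epsilon_{x,y,z}$, propagated by the fact that $P$ is computed by successive single-letter insertions; the reverse inclusion correctly reduces, by induction on $\len(w)$ and compatibility of the congruence with concatenation, to the row-insertion lemma, and then to a second induction on the number of rows for the bumped letter.

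The step you flag as the main obstacle does go through, and it is worth recording why, since it is the only place your sketch is not yet a proof. With $u_k=x_1\ldots x_l$ a row and $j$ minimal such that $x_j>y$, one first slides $y$ leftward: for $i=l,l-1,\ldots,j+1$ the factor $x_{i-1}\,x_i\,y$ satisfies $y<x_{i-1}\leq x_i$, so the relation $\epsilon$ (read as a congruence) rewrites it to $x_{i-1}\,y\,x_i$, producing $x_1\ldots x_j\,y\,x_{j+1}\ldots x_l$. One then extracts $x_j$ leftward: the factor $x_{j-1}\,x_j\,y$ satisfies $x_{j-1}\leq y<x_j$, so $\eta$ rewrites it to $x_j\,x_{j-1}\,y$, and thereafter each factor $x_{i-1}\,x_j\,x_i$ satisfies $x_{i-1}\leq x_i<x_j$ and $\eta$ carries $x_j$ to the front, yielding $x_j\,x_1\ldots x_{j-1}\,y\,x_{j+1}\ldots x_l$ as required. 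Note that only the row condition on $u_k$ and the minimality of $j$ are used here; the dominance $u_{k-1}\vartriangleright u_k$ is not what makes the elementary Knuth moves available (contrary to your closing remark), but is what guarantees that the subsequent insertion of the bumped letter $x_j$ into $u_1\ldots u_{k-1}$ is again an instance of the same lemma, so that the recursion terminates on the tableau $P(w)$.
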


Each plactic congruence class contains exactly one tableau. Indeed, any $1$-cell~$w$ in $[n]^\ast$ is equal to its Schensted's tableau in $\P_n$, that is, $w=P(w)$ holds in $\P_n$,~{\cite[Proposition 5.2.3]{Lothaire02}}.
Moreover, a $1$-cell $w$ is equal to the column reading of the planar representation of the tableau $P(w)$, that is, $w=C(P(w))$ holds in $\P_n$~{\cite[Problem~5.2.4]{Lothaire02}}.
Finally, the Knuth relations being homogeneous, we have $\len(P(w)) = \len(w)$, for any $1$-cell $w$ in~$[n]^\ast$.

\subsection{Pre-column presentation}
\label{Section:Pre-columnPresentation}

\subsubsection{Columns as generators}

One adds to the presentation  $\Knuth_{2}(n)$ one superfluous generator $c_{u}$ for any~$u$ in $\col(n)$. Let us denote by
\[
\Colo_1(n)=\big\{c_{u} \; \big| \; u\in\col(n) \big\}
\]
the set of \emph{column generators} of the monoid $\P_n$ and by 
\[
\CC_2(n)
\: = \:
\big\{\xymatrix@C=3em{c_{x_{p}}\ldots c_{x_{1}} \odfll{\gamma_u} c_u} \; \big| \; 
u=x_{p} \ldots x_{1}\in\col(n)\;\;\text{with}\;\len(u)\geq 2 \big\}
\]
the set of the defining relations for the column generators.
In the free monoid $\Colo_1(n)^\ast$, the  Knuth relations~(\ref{KnuthRelations}) can  be written  in the following form
\begin{equation}
\label{KnuthRelationsCol}
\big\{ c_{z}c_{x}c_{y} \odfll{\eta_{x,y,z}^{c}} c_{x}c_{z}c_{y} \;\big|\;  1 \leq x\leq y<z  \leq n \big\}
\;\cup\;
\big\{ c_{y}c_{z}c_{x}\odfll{\epsilon_{x,y,z}^{c}}c_{y}c_{x}c_{z} \;\big|\;  1 \leq x< y\leq z \leq n \big\}.
\end{equation}

Let denote by $\Knuthc_2(n)$ the $2$-polygraph whose set of $1$-cells is $\{c_{1},\ldots,c_{n}\}$ and whose set of $2$-cells is given by~(\ref{KnuthRelationsCol}). 
By definition, this $2$-polygraph is Tietze equivalent to the $2$-polygraph $\Knuth_2(n)$. Indeed, the mapping~$i\mapsto c_i$, for any $i$ in $[n]$, induces an isomorphism between the two presented monoids.
In the sequel, we will identify the $2$-polygraphs  $\Knuthc_2(n)$ and $\Knuth_2(n)$ through this mapping.
Let us define the $2$-polygraph $\Knuthcc_2(n)$, whose $1$-cells are columns and $2$-cells are the defining relations for columns generators and the Knuth relations:
\[
\Knuthcc_2(n) :=  \langle\;\; \Colo_1(n) \;\;|\;\; \CC_2(n) \, \cup \, \Knuthc_2(n)\;\;\rangle.
\]

\begin{proposition}
\label{KGamma2(n)Proposition}
For $n>0$, the $2$-polygraph $\Knuthcc_2(n)$ is a presentation of the monoid $\P_n$.
\end{proposition}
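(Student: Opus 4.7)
The plan is to exhibit a Tietze transformation from $\Knuthc_2(n)$ to $\Knuthcc_2(n)$, which by Proposition recalled in \ref{Subsubsection:TietzeTransformations2Polygraphs} will force them to present isomorphic monoids. Since $\Knuthc_2(n)$ is identified with $\Knuth_2(n)$ under the bijection $i\mapsto c_i$, and the latter presents $\P_n$ by Knuth's theorem recalled above, the conclusion will follow.

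More precisely, I would first observe that each $2$-cell $\gamma_u : c_{x_p}\ldots c_{x_1} \dfll c_u$ of $\CC_2(n)$ is collapsible in the sense of \ref{Subsubsection:TietzeTransformations2Polygraphs}: its target $c_u$ is a single $1$-cell of $\Colo_1(n)$ and, since its source is a product of the generators $c_{x_i}$ with $x_i<\ldots<x_p$ strictly decreasing, none of which equals the column $c_u$ (as $\len(u)\geq 2$), the source does not contain $c_u$. Therefore $c_u$ is a redundant $1$-cell with collapsible $2$-cell $\gamma_u$, and the elementary Tietze transformation $\iota_{\gamma_u}^1$ adjoining both of them is available.

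Next, I would fix a linear order on $\col(n)\setminus[n]$, e.g.\ by length, and compose the elementary adjunctions $\iota_{\gamma_u}^1$ indexed by this order, starting from $\Knuthc_2(n)$. The composite Tietze transformation
\[
\tck{\Knuthc_2(n)} \;\fl\; \tck{\Knuthc_2(n)}\bigl[\{c_u\}_{u\in \col(n),\,\len(u)\geq 2}\bigr]\bigl(\{\gamma_u\}\bigr)
\]
has as target precisely the $(2,1)$-category generated by the $2$-polygraph $\Knuthcc_2(n)$. Hence $\Knuthcc_2(n)$ and $\Knuthc_2(n)$ are Tietze equivalent.

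There is no real obstacle here; the only point that needs a moment of care is the verification that $\gamma_u$ is collapsible, which as noted above is immediate from $\len(u)\geq 2$ and the fact that the source of $\gamma_u$ is a product of length-$1$ column generators distinct from $c_u$. Combining the Tietze equivalence with the identification of $\Knuthc_2(n)$ and $\Knuth_2(n)$ and invoking the fact that $\Knuth_2(n)$ presents $\P_n$ yields the proposition.
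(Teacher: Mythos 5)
Your proposal is correct and follows essentially the same route as the paper: both arguments successively apply the elementary Tietze adjunctions $\iota^1_{\gamma_u}$ of the redundant column generators $c_u$ with their collapsible defining $2$-cells $\gamma_u$, and then invoke the identification of $\Knuthc_2(n)$ with $\Knuth_2(n)$. Your explicit check that each $\gamma_u$ is collapsible (the source being a product of length-one generators, hence not containing $c_u$) is a welcome detail that the paper leaves implicit.
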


\begin{proof}
We have $\Colo_1(n)=\{\,c_{1},\ldots, c_{n}\,\}\cup \{\,c_{u}\;\big|\; u\in \col(n),\, \len(u)\geq 2 \,\}$, thus in order to prove that the $2$-polygraphs $\Knuthcc_2(n)$ and $\Knuthc_2(n)$ are Tietze equivalent, we add to the $2$-polygraph $\Knuthc_2(n)$ all the column generator $c_u$, for all $u=x_p\ldots x_1$ in $\col(n)$ such that $\len(u)\geq 2$, and the corresponding collapsible $2$-cell : $\gamma_u : c_{x_p}\ldots c_{x_1} \dfl c_u$. We apply successively a Tietze transformation~$\iota^1_{\gamma_u}$, defined in \ref{Subsubsection:TietzeTransformations2Polygraphs}. {\bf i)}, from the bigger column in~$\col(n)$ to the smaller one with respect to the order $\ordrecoldeglex$. 
The composite 
\[
T_{1}= \iota^1_{\gamma_1} \circ \ldots \circ \iota^1_{\gamma_{u_{i}}}\circ \iota^1_{\gamma_{u_{i+1}}}\circ \ldots \circ \iota^1_{\gamma_{n\ldots 1}},
\]
with $u_{i}\ordrecoldeglex u_{i+1}$, defines a Tietze transformation 
\[
T_1 : 
\tck{\Knuthc_2(n)}
\fll 
\tck{\Knuthcc_2(n)},
\]
which proves that~$\Knuthcc_2(n)$ is Tietze equivalent to $\Knuthc_2(n)$, hence Tietze equivalent to $\Knuth_{2}(n)$.
\end{proof}

\subsubsection{Pre-column presentation}
\label{Subsubsection:Pre-columnPresentation}

Let us define the $2$-polygraph~$\PreColo_2(n)$ whose set of $1$-cells is~$\Colo_{1}(n)$ and the set of $2$-cells is
\[
\PreColo_2(n) = \PC_2(n)
\,\cup\,
\big\{
c_{x}c_{u} \odfl{\alpha'_{x,u}} c_{xu}
\;|\; xu\in \col(n) \;\;\text{and}\;\; 1\leq x \leq n
\big\},
\]
where
\[
\PC_2(n) = \big\{\;
c_{x}c_{zy}\odfll{\alpha'_{x,zy}}c_{zx}c_{y}
\;|\; 1 \leq x\leq y<z \leq n
\;\big\}\,\cup\,\big\{\; c_{y}c_{zx}\odfll{\alpha'_{y,zx}}c_{yx}c_{z}
\;|\; 1 \leq x< y\leq z \leq n
\;\big\}.
\]

We will see in Lemma~\ref{Gamma2(n)Lemma} that the $2$-cells $\alpha'_{x,zy}$ and $\alpha'_{y,zx}$ correspond respectively to the Knuth relations $\eta_{x,y,z}$ for~$1 \leq x\leq y<z \leq n$ and $\epsilon_{x,y,z}$ for~$1 \leq x< y\leq z \leq n$. They also correspond to the following Schensted transformations as indicated in the following diagrams:
\[
\small
\xymatrix @C=1.5em @R=0.6em {
{\young(x)\;\raisebox{-0.4cm}{\young(y,z)}}
\ar@2@/^/[rrrr] ^-{\alpha'_{x,zy}}
\ar@{..>} [ddd] _-{C\big(\scalebox{0.7}{\young(x)\;\raisebox{-0.4cm}{\young(y,z)}}\big)}
&&&&{\raisebox{-0.4cm}{\young(xy,z)}}
\ar@{..>}[ddd] ^-{C\big( \scalebox{0.7}{\raisebox{-0.4cm}{\young(xy,z) }}\big)}
\\ 
\\\\
{xzy}
\ar@{..>} [rr] ^-{\scalebox{0.8}{P(xz)}}
&&{\young(xz)\;y}
\ar@{..>} [uuurr] ^-{\scalebox{0.8}{P((xz)y)}}
&&{zxy}
\ar@2@/^5ex/ [llll] ^-{\eta_{x,y,z}} ^{}="tgt"
}
\quad
\xymatrix @C=1.5em @R=0.6em {
{\young(y)\;\raisebox{-0.4cm}{\young(x,z)}}
\ar@2@/^/[rrrr] ^-{\alpha'_{y,zx}}
\ar@{..>}[ddd] _-{C\big(\scalebox{0.7}{\young(y)\;\raisebox{-0.4cm}{\young(x,z)}}\big)}
&&&&{\raisebox{-0.4cm}{\young(xz,y)}}
\ar@{..>}[ddd] ^-{C\big( \scalebox{0.7}{\raisebox{-0.4cm}{\young(xz,y) }}\big)}
\\ 
\\\\
{yzx}
\ar@{..>} [rr] ^-{\scalebox{0.8}{P(yz)}}
\ar@2@/_5ex/ [rrrr] _-{\epsilon_{x,y,z}} ^{}="tgt"
&&{\young(yz)\;x}
\ar@{..>} [uuurr] ^-{\scalebox{0.8}{P((yz)x)}}
&&{yxz}
}
\]

\begin{proposition}
\label{Proposition:PresentationPreColo2}
For $n>0$, the $2$-polygraph $\PreColo_2(n)$ is a presentation of the monoid $\P_n$.
\end{proposition}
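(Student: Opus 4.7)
The plan is to show that $\PreColo_2(n)$ is Tietze equivalent to the $2$-polygraph $\Knuthcc_2(n)$ introduced in Proposition~\ref{KGamma2(n)Proposition}, which already presents $\P_n$. Since both polygraphs share the same set $\Colo_1(n)$ of $1$-cells, this reduces to showing that every $2$-cell of each presentation is derivable in the free $(2,1)$-category generated by the other, after which the conclusion follows from packaging these derivations as elementary Tietze transformations.

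First I would realise each $2$-cell of $\PreColo_2(n)$ in $\tck{\Knuthcc_2(n)}$ by factoring it through the $\gamma$'s and the Knuth relations. For $\alpha'_{x,u} : c_x c_u \dfl c_{xu}$ with $xu$ in $\col(n)$, the composite $c_x \gamma_u^- \star_1 \gamma_{xu}$ produces the desired $2$-cell, with the convention that $\gamma_u$ is the identity when $\len(u) = 1$. For the row-swap rule $\alpha'_{x,zy}$ with $x \leq y < z$, the composite $c_x \gamma_{zy}^- \star_1 (\eta^c_{x,y,z})^- \star_1 \gamma_{zx} c_y$ pushes $z$ past $x$ by using the Knuth relation in reverse; the $2$-cell $\alpha'_{y,zx}$ is constructed symmetrically using $\epsilon^c_{x,y,z}$. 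These factorisations are exactly the Schensted diagrams pictured just before the statement.

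Conversely, to realise each $2$-cell of $\Knuthcc_2(n)$ in $\tck{\PreColo_2(n)}$, I would first derive $\gamma_u : c_{x_p}\ldots c_{x_1} \dfl c_u$ for any column $u = x_p\ldots x_1$ of length $p \geq 2$ by iteratively applying the rules $\alpha'_{x_k, x_{k-1}\ldots x_1}$ from right to left; each intermediate source is well-formed because every suffix of a column is itself a column. Once the $\gamma_u$ are available, $\eta^c_{x,y,z}$ is recovered as $\gamma_{zx} c_y \star_1 (\alpha'_{x,zy})^- \star_1 c_x \gamma_{zy}^-$, and $\epsilon^c_{x,y,z}$ analogously from $\alpha'_{y,zx}$. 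Packaging these two families of derivations as a sequence of the elementary Tietze transformations of \ref{Subsubsection:TietzeTransformations2Polygraphs} yields the desired Tietze equivalence, and hence $\PreColo_2(n)$ presents $\P_n$. The only bookkeeping point that needs care is the order in which the $\gamma_u$ are eliminated: as in the proof of Proposition~\ref{KGamma2(n)Proposition}, columns should be processed in increasing length with respect to $\ordrecoldeglex$ so that each elimination is justified by the $\alpha'$'s already adjoined.
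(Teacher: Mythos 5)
Your proof is correct and follows essentially the same route as the paper: a Tietze equivalence between $\PreColo_2(n)$ and $\Knuthcc_2(n)$ obtained from exactly the factorisations given by the critical branchings (replacing $\eta^c_{x,y,z}$, $\epsilon^c_{x,y,z}$ and $\gamma_{x_p\ldots x_1}$ by the $\alpha'$'s and conversely). The only difference is organisational: the paper splits the equivalence into two Nielsen-transformation steps through the intermediate polygraph $\CPC_2(n)$ (Lemmas~\ref{Gamma2(n)Lemma} and~\ref{Precolo2nLemma}), whereas you perform the adjunctions and eliminations in one pass, which is equally valid.
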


The $2$-polygraph $\PreColo_2(n)$ is called the \emph{pre-column presentation of~$\P_n$}.
The proof of Proposition~\ref{Proposition:PresentationPreColo2} is given by the following two lemmas. 

\begin{lemma}
\label{Gamma2(n)Lemma}
The $2$-polygraph 
\[
\CPC_2(n) :=  \langle\;\; \Colo_1(n) \;\;|\;\; \CC_2(n)  \cup \PC_2(n)\;\;\rangle
\]
is Tietze equivalent to the $2$-polygraph $\Knuthcc_2(n)$.
\end{lemma}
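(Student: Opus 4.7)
The plan is to construct a Tietze transformation from $\tck{\Knuthcc_2(n)}$ to $\tck{\CPC_2(n)}$ that exchanges each Knuth $2$-cell for the corresponding pre-column $2$-cell, while keeping the defining $2$-cells in $\CC_2(n)$ and the $1$-cells $\Colo_1(n)$ unchanged. The key observation is that each pre-column relation in $\PC_2(n)$ admits a natural factorisation through a Knuth relation and two column defining $2$-cells of $\CC_2(n)$ in the $(2,1)$-category $\tck{\Knuthcc_2(n)}$.

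For $1 \le x \le y < z \le n$, the inequalities $z > y \ge x$ ensure that $zy$ and $zx$ are columns of length $2$, so that $\gamma_{zy}:c_zc_y\dfll c_{zy}$ and $\gamma_{zx}:c_zc_x\dfll c_{zx}$ both belong to $\CC_2(n)$, and the composite
\[
c_xc_{zy} \odfl{(c_x\gamma_{zy})^{-}} c_xc_zc_y \odfl{(\eta^{c}_{x,y,z})^{-}} c_zc_xc_y \odfl{\gamma_{zx}c_y} c_{zx}c_y
\]
defines a $2$-cell of $\tck{\Knuthcc_2(n)}$ parallel to $\alpha'_{x,zy}$. Dually, for $1 \le x < y \le z \le n$, the columns $zx$ and $yx$ have length $2$, and the composite
\[
c_yc_{zx} \odfl{(c_y\gamma_{zx})^{-}} c_yc_zc_x \odfl{\epsilon^{c}_{x,y,z}} c_yc_xc_z \odfl{\gamma_{yx}c_z} c_{yx}c_z
\]
is a $2$-cell of $\tck{\Knuthcc_2(n)}$ parallel to $\alpha'_{y,zx}$.

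The Tietze transformation is then produced in two stages. First, for every admissible triple, I would apply the elementary adjunctions $\iota_{\alpha'_{x,zy}}$ and $\iota_{\alpha'_{y,zx}}$ of redundant $2$-cells, using the factorisations above to witness the existence of parallel $2$-cells. Second, for every admissible triple, I would apply the elementary elimination $\pi_{(\beta,\eta^{c}_{x,y,z})}$, where the mediating $2$-cell $\beta$ is the composite
\[
c_zc_xc_y \odfl{\gamma_{zx}c_y} c_{zx}c_y \odfl{(\alpha'_{x,zy})^{-}} c_xc_{zy} \odfl{(c_x\gamma_{zy})^{-}} c_xc_zc_y
\]
obtained by traversing the first diagram through the newly adjoined $\alpha'$-cell, and symmetrically for each $\epsilon^{c}_{x,y,z}$. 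Composing these two families of elementary transformations yields the desired Tietze transformation; equivalently, each pair (adjunction, elimination) can be packaged into a single Nielsen transformation $\kappa_{\eta^{c}_{x,y,z} \leftarrow \alpha'_{x,zy}}$ or $\kappa_{\epsilon^{c}_{x,y,z} \leftarrow \alpha'_{y,zx}}$.

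The argument is essentially bookkeeping and involves no real difficulty; the only point requiring care is to verify that the column defining $2$-cells invoked in each factorisation do lie in $\CC_2(n)$, which follows directly from the strict inequalities imposed in the range of each Knuth relation.
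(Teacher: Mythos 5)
Your proof is correct and follows essentially the same route as the paper: you identify the critical branching between each Knuth $2$-cell and the column-defining $2$-cells $\gamma_{zy}$, $\gamma_{zx}$, $\gamma_{yx}$ of $\CC_2(n)$, and use the resulting factorisation to perform the Nielsen transformations $\kappa_{\eta^{c}_{x,y,z}\leftarrow\alpha'_{x,zy}}$ and $\kappa_{\epsilon^{c}_{x,y,z}\leftarrow\alpha'_{y,zx}}$, decomposed as an adjunction of the redundant $2$-cell followed by an elimination of the Knuth $2$-cell. This is precisely the paper's argument, including the verification that the strict inequalities guarantee the required length-$2$ columns exist.
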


\begin{proof}
For $1\leq x\leq y<z \leq n$, consider the following critical branching
\[\xymatrix @C=1.5em @R=0.6em {
& {c_{x}c_{z}c_{y}}
  \ar@2[r] ^-{c_{x}\gamma_{zy}}
& {c_{x}c_{zy}}
\\
{c_{z}c_{x}c_{y}}
	\ar@2@/^/ [ur] ^-{\eta_{x,y,z}^{c}}
	\ar@2@/_/ [dr] _-{\gamma_{zx}c_{y}}
\\
& {c_{zx}c_{y}}
}\]
of the $2$-polygraph $\Knuthcc_2(n)$. Let consider the Tietze transformation 
\[
\kappa_{\eta_{x,y,z}^{c} \leftarrow \alpha'_{x,zy}} : 
\tck{\Knuthcc_2(n)} \fll \tck{\Knuthcc_2(n)}/(\eta_{x,y,z}^{c}\leftarrow\alpha'_{x,zy}),
\]
that substitutes the $2$-cell 
$\alpha'_{x,zy} : c_{x}c_{zy} \dfl c_{zx}c_{y}$ to the $2$-cell $\eta_{x,y,z}^{c}$, for every~$1\leq x\leq y<z \leq n$.
We denote by $T_{\eta\leftarrow \alpha'}$ the successive applications of the Tietze transformation $\kappa_{\eta_{x,y,z}^{c}\leftarrow\alpha'_{x,zy}}$, for every $1\leq x\leq y<z \leq n$, with respect to the lexicographic order on the triples $(x,y,z)$ induced by the total order on $[n]$. 

Similarly, for $1\leq x< y\leq z \leq n$, consider the following critical branching
\[\xymatrix @C=1.5em @R=0,6em {
& {c_{y}c_{x}c_{z}}
  \ar@2[r] ^-{\gamma_{yx}c_{z}}
& {c_{yx}c_{z}}
\\
{c_{y}c_{z}c_{x}}
	\ar@2@/^/ [ur] ^-{\epsilon_{x,y,z}^{c}}
	\ar@2@/_/ [dr] _-{c_{y}\gamma_{zx}}
\\
& {c_{y}c_{zx}}
}
\]
of the $2$-polygraph  $\Knuthcc_2(n)$.
Let consider the Tietze transformation 
\[
\kappa_{\epsilon_{x,y,z}^{c} \leftarrow\alpha'_{y,zx}} : 
\tck{\Knuthcc_2(n)} \fll \tck{\Knuthcc_2(n)}/(\epsilon_{x,y,z}^{c} \leftarrow\alpha'_{y,zx}),
\]
that substitutes the $2$-cell 
$\alpha'_{y,zx} : c_{y}c_{zx} \dfl c_{yx}c_{z}$ to the $2$-cell $\epsilon_{x,y,z}^{c}$, for every $1\leq x < y \leq z \leq n$. 
We denote by $T_{\epsilon\leftarrow \alpha'}$ the successive applications of the Tietze transformation $\kappa_{\epsilon_{x,y,z}^{c}\leftarrow\alpha'_{x,zy}}$, for every $1\leq x < y \leq z \leq n$, with respect to the lexicographic order on the triples $(x,y,z)$ induced by the total order on $[n]$.

Let define the composite $T_{\eta,\epsilon \leftarrow \alpha'}=T_{\eta\leftarrow \alpha'} \circ T_{\epsilon\leftarrow \alpha'}$, this gives us a Tietze transformation:
\[
T_{\eta,\epsilon\leftarrow \alpha'} : \tck{\Knuthcc_2(n)} \fll \tck{\CPC_2(n)}.
\]
In this way, the $2$-polygraphs $\Knuthcc_2(n)$ and $\CPC_2(n)$ are Tietze equivalent.
\end{proof}

The following lemma proves that the $2$-polygraph $\PreColo_2(n)$ is a presentation of the monoid $\P_n$. 

\begin{lemma}
\label{Precolo2nLemma}
The $2$-polygraph $\PreColo_2(n)$ is Tietze equivalent to the $2$-polygraph $\CPC_2(n)$.
\end{lemma}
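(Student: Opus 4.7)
The plan is to produce a Tietze transformation from $\CPC_2(n)$ to $\PreColo_2(n)$ in three blocks: first introduce the $\alpha'_{x,u}$ cells as redundant adjunctions, then eliminate the $\gamma_u$ cells of length at least $3$ by Nielsen substitutions against the freshly added $\alpha'$'s, and finally identify each length-$2$ cell $\gamma_{x_2 x_1}$ with its parallel $\alpha'_{x_2,x_1}$.

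First I would add, by the elementary Tietze transformation $\iota_{\alpha'_{x,u}}$ of~\ref{Subsubsection:TietzeTransformations2Polygraphs} iii), every $2$-cell
$\alpha'_{x,u}: c_x c_u \dfll c_{xu}$
indexed by $x \in [n]$ and $u \in \col(n)$ with $xu \in \col(n)$. This is legitimate because in the $(2,1)$-category $\tck{\CPC_2(n)}$ there already is a parallel $2$-cell: if $u$ is the single letter $x_1$, it is $\gamma_{xx_1}$ itself; if $u = x_p \ldots x_1$ with $p \geq 2$, it is the composite $(c_x \gamma_u^-) \star_1 \gamma_{xu}$, where $\gamma_u^-$ denotes the formal inverse of $\gamma_u$ in $\tck{\CPC_2(n)}$. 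Composing these adjunctions for all admissible $(x,u)$ yields a Tietze transformation $T_{\mathrm{add}}$ whose codomain contains both the $\gamma_u$ and the $\alpha'_{x,u}$.

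Next I would process the $\gamma_u$'s by strictly decreasing length. For each $p$ running from $n$ down to $3$ and each column $u = x_p \ldots x_1$ of length $p$, the $2$-cell $\gamma_u : c_{x_p}\ldots c_{x_1} \dfll c_u$ factors in the extended $(2,1)$-category as
\[
c_{x_p} c_{x_{p-1}} \ldots c_{x_1} \odfll{c_{x_p}\gamma_{x_{p-1}\ldots x_1}} c_{x_p} c_{x_{p-1}\ldots x_1} \odfll{\alpha'_{x_p,\,x_{p-1}\ldots x_1}} c_u .
\]
So the Nielsen transformation $\kappa'_{\gamma_u \leftarrow \alpha'_{x_p,\,x_{p-1}\ldots x_1}}$ replaces $\gamma_u$ by $\alpha'_{x_p,\,x_{p-1}\ldots x_1}$. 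The ordering matters: at the moment we eliminate $\gamma_u$ of length $p$, the auxiliary $\gamma_{x_{p-1}\ldots x_1}$ used in the factorisation has length $p-1 < p$ and is therefore still present in the polygraph.

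Finally, for each $u = x_2 x_1 \in \col(n)$ of length $2$, the cells $\gamma_{x_2 x_1}$ and $\alpha'_{x_2,x_1}$ share source $c_{x_2}c_{x_1}$ and target $c_{x_2 x_1}$, so the elementary Tietze transformation $\pi_{(\alpha'_{x_2,x_1},\,\gamma_{x_2 x_1})}$ of~\ref{Subsubsection:TietzeTransformations2Polygraphs} iv) removes $\gamma_{x_2 x_1}$. After the three blocks, the surviving $1$-cells are still $\Colo_1(n)$ and the surviving $2$-cells are exactly $\PC_2(n) \cup \{\alpha'_{x,u} \mid xu \in \col(n),\, 1 \leq x \leq n\} = \PreColo_2(n)$.

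The only subtle point is the scheduling in the second block: the decomposition of $\gamma_u$ uses the $\gamma$-cell of its tail $x_{p-1}\ldots x_1$, so we must process longer columns before shorter ones; doing it in any other order would require resurrecting a cell we have already removed. Modulo this bookkeeping, the composite of $T_{\mathrm{add}}$ with the Nielsen block and the final $\pi$-block is the desired Tietze transformation $\tck{\CPC_2(n)} \fll \tck{\PreColo_2(n)}$, and combining it with the Tietze equivalence of Lemma~\ref{Gamma2(n)Lemma} proves that $\PreColo_2(n)$ presents $\P_n$.
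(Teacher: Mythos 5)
Your proof is correct and follows essentially the same route as the paper's: both rest on the factorisation $\gamma_{x_p\ldots x_1} = (c_{x_p}\gamma_{x_{p-1}\ldots x_1})\star_1\alpha'_{x_p,\,x_{p-1}\ldots x_1}$ and process columns by decreasing length so that the tail's $\gamma$-cell is still available when needed. The only cosmetic differences are that the paper packages each adjunction--elimination pair into a single Nielsen transformation $\kappa'_{\gamma\leftarrow\alpha'}$ rather than splitting the additions and eliminations into separate blocks, and handles length-$2$ columns by simply setting $\alpha'_{y,x}:=\gamma_{yx}$ instead of an explicit add-then-eliminate step.
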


\begin{proof}
Let $x_{p}\ldots x_{1}$ be a column  with $\len(x_{p}\ldots x_{1})>2$ and  define $\alpha'_{y,x} :=\gamma_{yx} : c_{y}c_{x}\dfl c_{yx},$ for every~$x<y$.
Consider the following critical branching 
\[
\xymatrix @C=1.5em @R=0,6em {
& {c_{x_{p}} c_{x_{p-1}\ldots x_{1}}}
\\
{c_{x_{p}}c_{x_{p-1}}\ldots c_{x_{1}}}
	\ar@2@/^/ [ur] ^-{c_{x_{p}}\gamma_{x_{p-1}\ldots x_{1}}}
	\ar@2@/_/ [dr] _-{\gamma_{x_{p}\ldots x_{1}}}
\\
& {c_{x_{p}\ldots x_{1}}}
}
\]
of the $2$-polygraph  $\CPC_{2}(n)$  and  the following  Tietze transformation 
\[
\kappa_{\gamma_{x_{p}\ldots x_{1}} \leftarrow\alpha'_{x_{p},x_{p-1}\ldots x_{1}}}^{'} : 
\tck{\CPC_2(n)} \fll \tck{\CPC_2(n)}/(\gamma_{x_{p}\ldots x_{1}} \leftarrow\alpha'_{x_{p},x_{p-1}\ldots x_{1}}),
\]
that substitutes the $2$-cell 
\[\alpha'_{x_p,x_{p-1}\ldots x_{1}}: c_{x_{p}}c_{x_{p-1}\ldots x_{1}}\odfl{} c_{x_p\ldots x_1},\]
 to the $2$-cell 
\[\gamma_{x_{p}\ldots x_{1}}: c_{x_p}\ldots c_{x_{1}} \odfl{} c_{x_p\ldots x_1},\] for each column $x_{p}\ldots x_{1}$ such that $\len(x_{p}\ldots x_{1})>2$. 
Starting from the $2$-polygraph $\CPC_2(n)$, we apply successively  the Tietze transformation $\kappa_{\gamma_{x_{p}\ldots x_{1}} \leftarrow\alpha'_{x_{p},x_{p-1}\ldots x_{1}}}^{'}$, for every  column $x_{p}\ldots x_{1}$ such that~$\len(x_{p}\ldots x_{1})>2$,  from the bigger to the smaller one with respect to the total order  $\ordrecoldeglex$.

Let us define the composite 
\[
T_{\gamma \leftarrow \alpha'}= \kappa_{\gamma_{x_3x_2 x_1} \leftarrow\alpha'_{x_{3},x_{2}x_{1}}}^{'} \circ \ldots \circ \kappa_{\gamma_{x_{n}\ldots x_{1}} \leftarrow\alpha'_{x_{n},x_{n-1}\ldots x_{1}}}^{'},
\] 
with $x_{3}x_{2}x_{1}\ordrecoldeglex \ldots \ordrecoldeglex x_{n}\ldots x_{1}$. This gives us a Tietze transformation:
\[
T_{\gamma \leftarrow \alpha'}: \tck{\CPC_2(n)} \fll \tck{\PreColo_2(n)}.
\]
In this way, we prove that $\PreColo_2(n)$ is Tietze equivalent to $\CPC_2(n)$.
\end{proof}

To resume the construction of this section, we have constructed the following Tietze equivalences:
\[
\tck{\Knuth_2(n)}
\ofl{T_1}
\tck{\Knuthcc_2(n)}
\ofl{T_{\eta,\epsilon\leftarrow \alpha'}} 
\tck{\CPC_2(n)}
\ofl{T_{\gamma \leftarrow \alpha'}}
\tck{\PreColo_2(n)}.
\]

\subsection{Column presentation}
\label{Section:ColumnPresentation}

\subsubsection{Notation}

Let $n>0$ be a natural number.
Given columns $u=x_p\ldots x_1$ and $v=y_q\ldots y_1$ in $\col(n)$, we consider the tableau~$P(uv)$.
As observed in~{\cite[Lemma 3.1.]{CainGrayMalheiro15}}, the length $\lnds(uv)$ of the longest non-decreasing subsequence of $uv$ is lower or equal to $2$. Indeed, if $uv$ is a column, necessary its non-decreasing subsequences are each of length equal to one and thus $\lnds(uv) = 1$. Otherwise, if $uv$ is not a column, then $x_1\leq y_q$. Hence all the non-decreasing subsequences of~$uv$ are of length~$2$.
As a consequence, the tableau $P(uv)$ contains at most two columns. We will use graphical notations depending on whether the tableau $P(uv)$ consists in two columns:
\begin{enumerate}[{\bf i)}]
\item we will denote $\typedeux{1}$ if the planar representation of $P(uv)$ is the tableau:

\renewcommand{\arraystretch}{0.6}
\setlength{\tabcolsep}{0.15cm}
\newcolumntype{C}[1]{>{\centering\arraybackslash }b{#1}}
\begin{center}
\begin{tabular}{|c|c|}
\hline
$x_1$ & $y_1$\\
\hline
$\vdots$&$\vdots$\\
\cline{2-2}
& $y_q$\\
\cline{2-2}
\\
\cline{1-1}
$x_p$\\ 
\cline{1-1}
\end{tabular}
\end{center}
that is, $p\geq q$ and $x_i\leq y_i$, for any $i\leq q$,
\item we will denote $\typedeux{0}$ in all the other cases, that is, when $p<q$ or $x_i>y_i$, for some $i\leq q$.
\end{enumerate}

In the case {\bf ii)}, we will denote $\typedeux{01}$ if the tableau $P(uv)$ has one column and we will denote $\typedeux{02}$ if the tableau $P(uv)$ has two columns.

\subsubsection{Column presentation}

For every  columns $u$ and $v$ in $\col(n)$ such that $\typedeux{0}$, we define a $2$-cell
\[
\alpha_{u,v} :  c_uc_v \dfl c_{w}c_{w'}
\]
where 
\begin{enumerate}[{\bf i)}]
\item $w=uv$ and $c_{w'}=1$, if $\typedeux{01}$,
\item $w$ and $w'$ are respectively the left and right columns of the tableau $P(uv)$, if $\typedeux{02}$.
\end{enumerate}

Let us denote by $\Colo_2(n)$ the $2$-polygraph whose set of $1$-cells is $\Colo_1(n)$ and the set of $2$-cells is
\begin{equation}
\label{rulesAlpha}
\Colo_2(n)
\: = \:
\big\{ \;c_{u}c_{v}\odfl{\alpha_{u,v}} c_{w}c_{w'} \; \big| \; u,v\in\col(n)\;\text{and}\;\typedeux{0} \; \big\}.
\end{equation}

Note that the $2$-cells of $\PreColo_2(n)$ correspond to the $2$-cells $\alpha_{u,v}$ of $\Colo_2(n)$, where $\len(u)=1$ and~$\len(v)=2$.
Moreover, we notice that, for any $2$-cells $\alpha_{u,v}:c_uc_v \dfl c_wc_{w'}$ of $\Colo_2(n)$, there exists a $2$-cell in $\PreColo_2(n)^\ast$ with source $c_uc_v$ and target $c_wc_{w'}$.

\subsubsection{Column presentation and Schensted's algorithm}

Let us remark that Schensted's Algorithm~\ref{Algorithm:Schensted} that computes a tableau~$P(w)$ from a $1$-cell $w$ in $[n]^\ast$, corresponds to the leftmost reduction path in~$\Colo_2^\ast(n)$ from the $1$-cell $w$ to its normal form $P(w)$, that is, the reduction paths obtained by applying the rules of~$\Colo_2(n)$ starting from the left.
For example, consider the $1$-cell $w=421532435452$ in~$[5]^{\ast}$. To compute the tableau~$P(w)$, one applies the following successive rules of~$\Colo_2(5)$ starting in each step from the left:
\[
w=\scalebox{0.72}{\young(4)\,\young(2)\,\young(1)\,\young(5)\,\young(3)\,\young(2)\,\young(4)\,\young(3)\,\young(5)\,\young(4)\,\young(5)\,\young(2)}
\;
\odfl{\alpha_{4,2}}
\;
\scalebox{0.72}{ {\raisebox{-0.43cm}{\young(2,4)}}\,\young(1)\,\young(5)\,\young(3)\,\young(2)\,\young(4)\,\young(3)\,\young(5)\,\young(4)\,\young(5)\,\young(2)}
\;
\odfl{\alpha_{42,1}}
\;
\scalebox{0.72}{ \raisebox{-0.85cm}{\young(1,2,4)}\,\young(5)\,\young(3)\,\young(2)\,\young(4)\,\young(3)\,\young(5)\,\young(4)\,\young(5)\,\young(2)}
\]
\[
\scalebox{0.72}{ \raisebox{-0.85cm}{\young(1,2,4)}\,\young(5)\,\young(3)\,\young(2)\,\young(4)\,\young(3)\,\young(5)\,\young(4)\,\young(5)\,\young(2)}
\;
\odfl{\alpha_{5,3}}
(\,\ldots\,)
\odfl{\alpha_{5,4}}
\scalebox{0.72}{ {\raisebox{-0.85cm}{\young(1,2,4)}\,\raisebox{-0.85cm}{\young(2,3,5)}\,\raisebox{-0.43cm}{\young(3,4)}\,\raisebox{-0.43cm}{\young(4,5)}\,\young(5)\,\young(2)}}
\;
\odfl{\alpha_{5,2}}
\;
\scalebox{0.72}{ {\raisebox{-0.85cm}{\young(1,2,4)}}\,\raisebox{-0.85cm}{\young(2,3,5)}\,\raisebox{-0.43cm}{\young(3,4)}\,\raisebox{-0.43cm}{\young(4,5)}\,\raisebox{-0.43cm}{\young(2,5)}}
\;
\odfl{\alpha_{54,52}}
\;
\scalebox{0.72}{ {\raisebox{-0.85cm}{\young(1,2,4)}}\,\raisebox{-0.85cm}{\young(2,3,5)}\,\raisebox{-0.43cm}{\young(3,4)}\,\raisebox{-0.85cm}{\young(2,4,5)}\,\young(5)}
\]
\[
\scalebox{0.72}{ {\raisebox{-0.85cm}{\young(1,2,4)}}\,\raisebox{-0.85cm}{\young(2,3,5)}\,\raisebox{-0.43cm}{\young(3,4)}\,\raisebox{-0.85cm}{\young(2,4,5)}\,\young(5)}
\;
\odfl{\alpha_{43,542}}
\;
\scalebox{0.72}{ {\raisebox{-0.85cm}{\young(1,2,4)}}\,\raisebox{-0.85cm}{\young(2,3,5)}\,\raisebox{-0.85cm}{\young(2,3,4)}\,\raisebox{-0.43cm}{\young(4,5)}\,\young(5)}
\;
\odfl{\alpha_{532,432}}
\;
\scalebox{0.72}{ {\raisebox{-0.85cm}{\young(1,2,4)}}\,\raisebox{-1.3cm}{\young(2,3,4,5)}\,\raisebox{-0.43cm}{\young(2,3)}\,\raisebox{-0.43cm}{\young(4,5)}\,\young(5)}
\;
\odfl{\alpha_{421,5432}} 
\scalebox{0.72}{ \raisebox{-1.3cm}{\young(12245,2335,44,5)}}  =  P(w)
\]
In particular, for any columns~$u$ and~$v$ in~$\col(n)$ such that~$\typedeux{0}$, applying successive rules of~$\Colo_2(n)$ on~$uv$ starting in each step from the left leads to a unique normal form, which is the tableau~$P(uv)$.

\begin{proposition}
\label{normalformproperty}
The $2$-polygraph~$\Colo_2(n)$ has the unique normal form property.
\end{proposition}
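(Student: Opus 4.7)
The plan is to characterise the normal forms of $\Colo_2(n)$ as the column readings of tableaux and then conclude by invoking the uniqueness of the tableau representative in each plactic class.

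First I would identify the normal forms. A $1$-cell $c_{u_1}\cdots c_{u_k}$ of $\Colo_1(n)^\ast$ admits a rewriting step exactly when some adjacent pair $c_{u_i}c_{u_{i+1}}$ satisfies $\typedeuxbase{$u_i$}{$u_{i+1}$}{0}$, because the $2$-cells of $\Colo_2(n)$ are precisely the $\alpha_{u,v}$ with $\typedeuxbase{$u$}{$v$}{0}$. Hence $c_{u_1}\cdots c_{u_k}$ is a normal form if and only if $u_1\vartriangleright u_2\vartriangleright\cdots\vartriangleright u_k$, that is, if and only if the word $u_1u_2\cdots u_k$ in $[n]^\ast$ is a tableau whose column decomposition is $u_1,\ldots,u_k$.

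Second, I would check that $\Colo_2(n)$ presents the monoid $\P_n$. Every $2$-cell of $\PreColo_2(n)$ is already a $2$-cell of $\Colo_2(n)$, and conversely, as noted just after the definition of $\Colo_2(n)$, every $\alpha_{u,v}$ can be realised by a $2$-cell of $\PreColo_2^\ast(n)$ obtained by iterating Schensted's insertion letter by letter. Thus $\Colo_2(n)$ and $\PreColo_2(n)$ induce the same congruence on $\Colo_1(n)^\ast$, and by Proposition~\ref{Proposition:PresentationPreColo2} both present $\P_n$.

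Now suppose a $1$-cell $w$ of $\Colo_1(n)^\ast$ rewrites in $\Colo_2(n)$ to two normal forms $v_1$ and $v_2$. By the first step, $v_1=c_{u_1}\cdots c_{u_k}$ and $v_2=c_{u'_1}\cdots c_{u'_l}$ are the column readings of tableaux $t_1=u_1\cdots u_k$ and $t_2=u'_1\cdots u'_l$. By the second step, $v_1$ and $v_2$ represent the same element of $\P_n$, so $t_1\sim_{\mathrm{plax}(n)} t_2$. Since each plactic class contains exactly one tableau (recalled in~\ref{Placticcongruence}), we get $t_1=t_2$, and the uniqueness of the column decomposition of a tableau yields $v_1=v_2$.

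The argument is essentially bookkeeping once the correct characterisation of normal forms is in hand; the only non-trivial input is the uniqueness of the tableau in each plactic class, which is imported as a known fact. The subtlest point, and the one I would take the most care over, is the verification that $\Colo_2(n)$ is Tietze equivalent to $\PreColo_2(n)$, since without this one cannot translate rewriting-equivalence in $\Colo_2(n)$ into plactic equivalence.
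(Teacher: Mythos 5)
Your proof is correct and follows essentially the same route as the paper's: characterise the normal forms as juxtapositions of columns forming a tableau, observe that the two normal forms represent the same element of $\P_n$, and conclude by the uniqueness of the tableau in each plactic congruence class. The only (harmless) difference is that you justify the equality in $\P_n$ explicitly via the Tietze equivalence with $\PreColo_2(n)$, whereas the paper simply asserts it.
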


\begin{proof}
Consider a $1$-cell~$w$ in~$\Colo_1(n)^{\ast}$ and let~$w'$ and~$w''$ be normal forms of $w$. Proving the unique normal form property consists in showing that the normal forms~$w'$ and~$w''$ are equal. Let~$T'$ (resp.~$T''$) be the planar representation of~$w'$ (resp.~$w''$). Since~$w'$ and~$w''$ are normal forms, they don't contain any subsequences that form sources of $2$-cells in~$\Colo_2(n)$. As a consequence,~$T'$ (resp.~$T''$) is a juxtaposition of columns that form a tableau.  Hence, the normal forms $w'$ and $w''$ are tableaux such that the equality~$w=w'=w''$ holds in the monoid~$\P_n$. Since each congruence contains exactly one tableau~{\cite[Theorem~5.2.5]{Lothaire02}}, we have that $w'=w''$.
\end{proof}

\begin{proposition}
\label{Proposition:ColumnPresentation}
For $n>0$, the $2$-polygraph $\Colo_2(n)$ is a presentation of the monoid $\P_n$.
\end{proposition}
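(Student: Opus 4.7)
The plan is to establish a Tietze equivalence between $\Colo_2(n)$ and $\PreColo_2(n)$, since Proposition~\ref{Proposition:PresentationPreColo2} has already shown that $\PreColo_2(n)$ presents $\P_n$. Both $2$-polygraphs share the set of $1$-cells $\Colo_1(n)$, and I would first verify that under the identification $\alpha' = \alpha$, every $2$-cell of $\PreColo_2(n)$ is a $2$-cell of $\Colo_2(n)$: the rule $\alpha'_{x,zy}$ coincides with $\alpha_{x,zy}$ (whose target $P(xzy)$ has two columns $zx$ and $y$), the rule $\alpha'_{y,zx}$ coincides with $\alpha_{y,zx}$ (with two-column target $yx$ and $z$), and the rule $\alpha'_{x,u}$ with $xu$ a column coincides with $\alpha_{x,u}$ (whose target is the one-column tableau $xu$).

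Next, I would show that each $2$-cell $\alpha_{u,v}: c_uc_v \dfl c_wc_{w'}$ of $\Colo_2(n)$ is \emph{redundant} with respect to $\PreColo_2(n)$, i.e.\ there exists a parallel $2$-cell from $c_uc_v$ to $c_wc_{w'}$ in $\tck{\PreColo_2(n)}$. Equivalently, one must verify the equality $c_uc_v = c_wc_{w'}$ in the monoid presented by $\PreColo_2(n)$, which by Proposition~\ref{Proposition:PresentationPreColo2} is $\P_n$. This equality holds in $\P_n$ because, by~\ref{Placticcongruence}, $uv \sim_{\mathrm{plax}(n)} P(uv)$, and by construction of $\alpha_{u,v}$ the concatenation $ww'$ is the column reading of $P(uv)$, giving $uv = ww'$ in $\P_n$; applying the canonical map $\Colo_1(n)^\ast \fll \P_n$ then yields $c_uc_v = c_wc_{w'}$.

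Having this, it suffices to apply, for each pair $(u,v)$ with \typedeuxbase{$u$}{$v$}{0} not already covered by $\PreColo_2(n)$, an elementary Tietze transformation $\iota_{\alpha_{u,v}}$ of type \ref{Subsubsection:TietzeTransformations2Polygraphs} iii), adjoining $\alpha_{u,v}$ as a redundant $2$-cell. Composing these transformations yields a Tietze transformation $\tck{\PreColo_2(n)} \fll \tck{\Colo_2(n)}$, so that $\Colo_2(n)$ is Tietze equivalent to $\PreColo_2(n)$ and thus presents $\P_n$. I do not expect a serious obstacle: the content of the proof reduces to the plactic equivalence $uv \sim_{\mathrm{plax}(n)} P(uv)$ recalled in~\ref{placticmonoids}. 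If one wishes to make the derivation inside $\tck{\PreColo_2(n)}$ explicit, one can decompose $c_u$ and $c_v$ into length-one column generators using inverses of the third type of pre-column rules $\alpha'_{x,u}$, then run Schensted's insertion letter by letter using only pre-column rules, and finally recompose the two resulting columns; this also gives an alternative proof that each $\alpha_{u,v}$ is derivable from $\PreColo_2(n)$.
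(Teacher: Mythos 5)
Your proof is correct, but it follows a genuinely different route from the paper's. The paper proves the proposition by a direct Tietze equivalence with $\Knuthcc_2(n)$: in one direction it exhibits explicit composites of $2$-cells of $\Colo_2(n)$ realizing each Knuth relation $\eta^c_{x,y,z}$, $\epsilon^c_{x,y,z}$ and each column-defining relation $\gamma_{x_p\ldots x_1}$; in the converse direction it invokes the unique normal form property of $\Colo_2(n)$ (Proposition~\ref{normalformproperty}) to conclude that two $1$-cells equal modulo $\Colo_2(n)$ rewrite to the same tableau and are therefore plactic congruent. You instead build on Proposition~\ref{Proposition:PresentationPreColo2}, observe that $\PreColo_2(n)\subseteq\Colo_2(n)$ under the identification $\alpha'=\alpha$ (which is correct, and is essentially the content of Lemma~\ref{Lemma:Precol2} together with the identification of the $\alpha'_{x,u}$ with the one-column rules), and then adjoin each remaining $\alpha_{u,v}$ as a redundant $2$-cell, the redundancy being guaranteed by the equality $uv=C(P(uv))=ww'$ in $\P_n$. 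This is a clean argument: it justifies the assertion the paper makes without proof just after the definition of $\Colo_2(n)$ (that every $\alpha_{u,v}$ has a parallel $2$-cell in $\PreColo_2(n)^\ast$), it avoids any appeal to the normal form property, and it correctly uses the equivalence between equality in the presented monoid and the existence of a $2$-cell in the free $(2,1)$-category. What the paper's approach buys in exchange is an explicit derivation of the Knuth and column relations inside $\Colo_2(n)^\ast$, which it reuses implicitly elsewhere; your closing remark about running Schensted's insertion letter by letter with pre-column rules would recover an equally explicit derivation if needed.
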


The $2$-polygraph $\Colo_2(n)$ is called the \emph{column presentation} of the monoid $\P_n$. Note that, the set of columns being finite, this $2$-polygraph is finite.

\begin{proof} 
Let us prove that the $2$-polygraph $\Colo_2(n)$ is Tietze equivalent to the $2$-polygraph $\Knuthcc_2(n)$.
Any $2$-cell in~$\Knuthcc_2(n)$ can be deduced from a $2$-cell in $\Colo_2(n)$ as follows. For any $1\leq x \leq y < z \leq n$ (resp. $1 \leq x < y \leq z \leq n$), the $2$-cells $\eta_{x,y,z}^{c}$ (resp. $\epsilon_{x,y,z}^{c}$) can be deduced by the following composition 
\[
\xymatrix @C=2.3em @R=2.25em{
{c_{z}c_{x}c_{y}}
   \ar@{..>}@2 [r] ^-{\eta_{x,y,z}^{c}}
     \ar@2 [d]_-{\alpha_{z,x}c_{y}}
& {c_{x}c_{z}c_{y}}
     \ar@2 [d] ^-{c_{x}\alpha_{z,y}}
\\
{c_{zx}c_{y}}
& {c_{x}c_{zy}}
    \ar@2[l] ^-{\alpha_{x,zy}}
}
\qquad \raisebox{-0.7cm}{\text{(resp.}} \quad
\xymatrix @C=2.3em @R=2.25em{
{c_{y}c_{z}c_{x}}
   \ar@{..>}@2 [r] ^-{\epsilon_{x,y,z}^{c}}
     \ar@2 [d]_-{c_{y}\alpha_{z,x}}
& {c_{y}c_{x}c_{z}}
     \ar@2 [d] ^-{\alpha_{y,x}c_{z}}
\\
{c_{y}c_{zx}}
\ar@2[r] _-{\alpha_{y,zx}}
& {c_{yx}c_{z}}
}
\quad \raisebox{-0.7cm}{\text{).}}
\]
For any column $x_p\ldots x_1$, the $2$-cell $\gamma_{x_p\ldots x_1}$ can be deduced by the following composition
\[
\xymatrix @C=1.5em @R=2.25em{
c_{x_p} \ldots c_{x_1} 
	\ar@{..>}@2[rr] ^{\gamma_{x_p\ldots x_1}}
	\ar@2[d] _-{\alpha_{x_p,x_{p-1}}c_{x_{p-2}} \ldots c_{x_{1}}}
&& c_{x_p \ldots x_1}
\\
c_{x_px_{p-1}}c_{x_{p-2}} \ldots c_{x_1}
	\ar@2[r]
&(\dots)
	\ar@2[r]
& c_{x_p \ldots x_2}c_{x_1}
	\ar@2[u] _-{\alpha_{x_p\ldots x_2,x_1}}
}
\]
As a consequence, if the $1$-cells $w$ and $w'$ in $\Colo_1(n)^\ast$ are equal modulo relations in $\Knuthcc_2(n)$, then they are equal modulo relations in $\Colo_2(n)$.
Conversely, if the $1$-cells $w$ and $w'$ in $\Colo_1(n)^\ast$ are equal modulo relations in $\Colo_2(n)$, by Proposition~\ref{normalformproperty}, they have the same normal form with respect to~$\Colo_2(n)$. Moreover, this normal form is the common tableau of the $1$-cells $w$ and $w'$. It follows that $w$ and $w'$ are in the plactic congruence and hence  they are equal modulo~$\Knuthcc_2(n)$.
\end{proof}

\subsubsection{Termination of the column presentation}

The termination of the $2$-polygraph $\Colo_2(n)$ can be proved using the terminating order $\ll$ defined on $\Colo_1(n)^\ast$ as follows. For $c_{u_i}$ and $c_{v_j}$ in $\Colo_1(n)$, we have
$c_{u_1}\ldots c_{u_k}\ll c_{v_1}\ldots c_{v_l}$, if
\[
\begin{cases}
k<l\quad \text{or}\\ 
k=l\;\;\text{and}\;\; \exists\, i\in\{1,\ldots,k\}\;\;\text{such that}\;\; u_i \ordrecolrev v_i\;\;\text{and}\;\; c_{u_j}=c_{v_j}\;\; \text{for any}\;\; j<i.
\end{cases}
\]
The relation $\ll$ is a well-ordering on $\Colo_1(n)^\ast$, which is compatible with rules in $\Colo_2(n)$ proving the termination~{\cite[Lemma 3.2]{CainGrayMalheiro15}}.
An other method to prove termination of the $2$-polygraph $\Colo_2(n)$ will be given in \ref{Subsection:QuadraticNormalisation}.

\subsubsection{Confluence of the column presentation}
\label{Subsubsection:ConfluenceColumnPresentation}

The column presentation is confluent,~{\cite[Lemma 3.3]{CainGrayMalheiro15}}. The proof given in~\cite{CainGrayMalheiro15} consists in showing that the $2$-polygraph $\Colo_2(n)$ has the unique normal form property. 
Note that our construction in Section~\ref{SubSection:CoherentColumnPresentation} gives an other proof of the confluence of the $2$-polygraph~$\Colo_2(n)$ by showing the confluence of all the critical branchings of the column presentation. 

\subsubsection{Cardinality of the column presentation}

For $m=1$ and $m=2$, let us denote by $\varkappa(n,m)$  the number of $m$-cells of the presentation $\Colo_2(n)$ of the monoid~$\P_n$. We refer the reader to~\ref{SubSection:Computations} for the values of number of cells of the $2$-polygraph $\Colo_2(n)$ for plactic monoids of low-dimensional rank~$n$. 

\begin{proposition}
For $n>0$, we have  $\varkappa(n,1) = 2^{n} - 1$ and 
\[\varkappa(n,2) =  \varkappa(n,1)^{2}-\left( \prod_{1\leq i\leq j\leq n} {\frac{i+j+1}{i+j-1}} - \prod_{1\leq i\leq j\leq n} {\frac{i+j}{i+j-1}}\right).\]
\end{proposition}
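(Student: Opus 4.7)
The plan is to enumerate $1$-cells and $2$-cells directly from the combinatorics of columns.

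For $\varkappa(n,1)$, I would observe that a column $u=x_p\ldots x_1\in\col(n)$ is uniquely determined by its underlying set of letters $\{x_1,\ldots,x_p\}\subseteq[n]$, since those letters must be arranged in strictly decreasing order. The map $u\mapsto\{x_1,\ldots,x_p\}$ is thus a bijection between $\col(n)$ and the non-empty subsets of $[n]$, giving $\varkappa(n,1)=|\col(n)|=2^n-1$.

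For $\varkappa(n,2)$, the definition~(\ref{rulesAlpha}) presents the $2$-cells $\alpha_{u,v}$ as indexed by those pairs $(u,v)\in\col(n)^2$ satisfying $\typedeux{0}$. Writing $T_n$ for the number of complementary pairs, namely those with $\typedeux{1}$, one has
\[
\varkappa(n,2) = \varkappa(n,1)^2 - T_n.
\]
A pair $(u,v)$ with $\typedeux{1}$ is, by definition, a semistandard Young tableau on $[n]$ of shape $(2^q,1^{p-q})$ for some $p\geq q\geq 1$. Adjoining the $2^n$ tableaux of at most one column (the empty shape together with the single-column shapes, which are in bijection with subsets of $[n]$), one identifies $T_n+2^n$ with the total number $S_n$ of semistandard Young tableaux with entries in $[n]$ whose shape $\lambda$ satisfies $\lambda_1\leq 2$.

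It then remains to recognise the two enumerations as the stated products. The identity $2^n=\prod_{1\leq i\leq j\leq n}\frac{i+j}{i+j-1}$ follows by induction on $n$: the multiplicative increment when passing from $n-1$ to $n$ is $\prod_{i=1}^{n}\frac{i+n}{i+n-1}=\frac{2n}{n}=2$. For $S_n$, I would appeal to Stanley's hook-content formula $s_\lambda(1^n)=\prod_{(i,j)\in\lambda}\frac{n+j-i}{h(i,j)}$ applied to each partition $\lambda\subseteq(2^n)$; the total then evaluates to $S_n=\binom{2n+1}{n}$, which a direct manipulation identifies with $\prod_{1\leq i\leq j\leq n}\frac{i+j+1}{i+j-1}$. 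Substituting these computations into $\varkappa(n,2)=\varkappa(n,1)^2-(S_n-2^n)$ delivers the announced formula.

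The delicate step is the identification $S_n=\binom{2n+1}{n}$, equivalently the closed form $\prod_{1\leq i\leq j\leq n}\frac{i+j+1}{i+j-1}$. Rather than summing the hook-content formula shape by shape, the cleanest route I would take is the Lindström--Gessel--Viennot lemma: two-column semistandard tableaux with entries in $[n]$ correspond bijectively to pairs of non-crossing lattice paths on a suitable staircase grid, and the resulting $2\times 2$ determinant evaluates to the claimed product. A purely combinatorial alternative encodes each tableau as a ballot-type sequence of length $n$ over a four-letter alphabet (recording for each $k\in[n]$ whether it sits in the left column only, the right column only, both columns, or neither), which can then be enumerated via a refined ballot argument. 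Either way, establishing this combinatorial identity is the main obstacle.
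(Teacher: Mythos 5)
Your proof is correct and follows the same overall decomposition as the paper: count $\varkappa(n,1)$ by identifying columns with non-empty subsets of $[n]$, then obtain $\varkappa(n,2)$ by subtracting from $\varkappa(n,1)^2$ the number of pairs with $\typedeux{1}$, which you identify with the number of genuinely two-column tableaux, i.e.\ (tableaux with at most two columns) minus (tableaux with at most one column). The one substantive difference lies in how the key enumeration is settled. The paper simply invokes Gordon's product formula $|S_{n,q}| = \prod_{1\leq i\leq j\leq n} \frac{q+i+j-1}{i+j-1}$ for the number of tableaux with at most $q$ columns and entries in $[n]$, specialised to $q=1$ and $q=2$; the two products in the statement are therefore quoted, not derived. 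What you identify as ``the main obstacle'' is thus dissolved by a citation. Your elementary substitutes are sound where carried out: the telescoping argument for $\prod_{1\leq i\leq j\leq n}\frac{i+j}{i+j-1}=2^n$ is complete and correct, and the identification $S_n=\binom{2n+1}{n}=\prod_{1\leq i\leq j\leq n}\frac{i+j+1}{i+j-1}$ is a true and standard fact for which either of your proposed routes (Lindstr\"om--Gessel--Viennot on the pair of columns, or a ballot/reflection argument on the domination condition between the two column sets) would succeed; you have sketched rather than executed that step, but it is not a gap in the strategy, only a known identity left to verify. Your version buys self-containedness at the cost of length; the paper's buys brevity at the cost of an external reference.
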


\begin{proof}
The number $\varkappa(n,1)$ is the sum of the number of columns of length $k$ for any $1\leq k \leq n$. Moreover, the number of columns of length $k$ is equal to $\dbinom{n}{k}$.  Hence we have $\varkappa(n,1) = \displaystyle\sum_{k=1}^n{\binom{n}{k}}=2^{n} - 1$.

Denote by $S_{n,q}$ the set of all tableaux with at most $q$ columns and with entries in $[n]$. By Gordon~\cite{Gordon83}, we have 
\[|S_{n,q}| =  \prod_{1\leq i\leq j\leq n} {\frac{q+i+j-1}{i+j-1}}.\]
Then, for two columns~$u$ and~$v$ in~$\col(n)$ the number of possibilities of~$\typedeux{0}$ is~$|S_{n,2}| - |S_{n,1}|$. In addition, the number of possibilities of~$\typedeux{0}$ and~$\typedeux{1}$ is~$\varkappa(n,1)^{2}$. Since~$\varkappa(n,2)$ is equal to the number of possibilities of~$\typedeux{0}$, we have~$\varkappa(n,2) = \varkappa(n,1)^{2}-\left( |S_{n,2}|-|S_{n,1}|\right)$.
\end{proof}

\section{Coherent column presentation}
\label{Section:CoherentColumnPresentation}

\subsection{Coherent presentations of monoids}

\subsubsection{$(3,1)$-polygraph}

A \emph{$(3,1)$-polygraph} is a pair $(\Sigma_2,\Sigma_3)$ made of a $2$-polygraph $\Sigma_2$ and a globular extension $\Sigma_3$ of the $(2,1)$-category $\tck{\Sigma}_2$:
\[
\xymatrix @C=4.2em @R=2.8em{
\tck{\Sigma}_2
& \Sigma_3
	\ar@<.45ex> [l] ^{t_2}
	\ar@<-.45ex> [l] _{s_2}
}
.
\]
An element of the globular extension $\Sigma_3$ can be represented by a $3$-cell with the following globular shape
\[
\xymatrix @C=6em {
\bullet
	\ar @/^4ex/ [r] ^-{u} _-{}="src"
	\ar @/_4ex/ [r] _-{v} ^-{}="tgt"
	\ar@2 "src"!<-10pt,-10pt>;"tgt"!<-10pt,10pt> _-*+{f} ^-{}="srcA"
	\ar@2 "src"!<+10pt,-10pt>;"tgt"!<+10pt,10pt> ^-*+{g} _-{}="tgtA"
	\ar@3 "srcA"!<4pt,0pt> ; "tgtA"!<4pt,0pt> ^-{A}
&
\bullet
}
\qquad\text{or}\qquad
\xymatrix@!C@C=3em{
u
	\ar@2@/^3ex/ [r] ^{f} _{}="src"
	\ar@2@/_3ex/ [r] _{g} ^{}="tgt"
& 
v
\ar@3 "src"!<0pt,-10pt>;"tgt"!<0pt,10pt> ^-{A}
}
\]
that relates parallel $2$-cells $f$ and $g$ in the $(2,1)$-category $\tck{\Sigma}_2$. We will denote by $\tck{\Sigma}_3$ the free $(3,1)$-category generated by the $(3,1)$-polygraph $(\Sigma_2,\Sigma_3)$.
A pair $(f,g)$ of $2$-cells of $\tck{\Sigma}_2$ such that  
$s_1(f)=s_1(g)$ and~\mbox{$t_1(f)=t_1(g)$} is called a \emph{$2$-sphere} of $\tck{\Sigma}_2$.

\subsubsection{Coherent presentations of monoids}

An \emph{extended presentation} of a monoid $\M$ is a $(3,1)$-polygraph whose underlying $2$-polygraph is a presentation of the monoid $\M$. 
A \emph{coherent presentation of $\M$} is an extended presentation $\Sigma$ of $\mathbf{M}$ such that the cellular extension $\Sigma_3$ is a \emph{homotopy basis} of the $(2,1)$-category $\tck{\Sigma}_2$, that is, for every $2$-sphere $\gamma$ of $\tck{\Sigma}_2$, there exists a $3$-cell in $\tck{\Sigma}_3$ with boundary $\gamma$.

\subsubsection{Tietze transformations of \pdf{(3,1)}-polygraphs}

We recall the notion of Tietze transformation from~{\cite[Section 2.1]{GaussentGuiraudMalbos14}}. Let~$\Sigma$ be a $(3,1)$-polygraph. A $3$-cell~$A$ of~$\Sigma$ is called \emph{collapsible} if $t_2(A)$ is in $\Sigma_2$ and~$s_2(A)$ is a $2$-cell of the free $(2,1)$-category over $\tck{(\Sigma_2\setminus\{t_2(A)\})}$.
If~$A$ is a collapsible $3$-cell, then its target is called a \emph{redundant} cell. 
An \emph{elementary Tietze transformation} of a $(3,1)$-polygraph $\Sigma$ is a $3$-functor with domain $\tck{\Sigma}_3$ that belongs to one of the following operations:
\begin{enumerate}[{\bf i)}]
\item adjunction $\iota^1_\alpha$ and elimination $\pi_\alpha$ of a $2$-cell $\alpha$ as described in \ref{Subsubsection:TietzeTransformations2Polygraphs},
\item coherent adjunction $\iota^2_A : \tck{\Sigma}_3 \fl \tck{\Sigma}_3(\alpha)(A)$ of a redundant $2$-cell $\alpha$ with its collapsible $3$-cell $A$:
\[
\xymatrix@C=3.5em{
\bullet
	\ar@/^3ex/ [r] ^-{} ^{}="src"
	\ar@/_3ex/ [r] _-{} ^{}="tgt"
	\ar@2 "src"!<0pt,-10pt>;"tgt"!<0pt,10pt> ^-{\beta}
& \bullet
&
   \ar@{|~>} [r] ^{\iota_A^2}
&
&
\bullet
	\ar@/^3ex/ [rr] ^-{} ^{}="src"
	\ar@/_3ex/ [rr] _-{} ^{}="tgt"
	\ar@2 "src"!<-15pt,-10pt>;"tgt"!<-15pt,10pt> _-{\beta} ^-{}="srcA"
	\ar@2 "src"!<10pt,-10pt>;"tgt"!<10pt,10pt> ^-{\alpha} _-{}="tgtA"
	\ar@3 "srcA"!<6pt,0pt> ; "tgtA"!<-6pt,0pt> ^-{A}
&& \bullet
}
\]
\item coherent elimination $\pi_A : \tck{\Sigma}_3 \fl \tck{\Sigma}_3/A$ of a redundant $2$-cell $\alpha$ with its collapsible $3$-cell $A$:
\[
\xymatrix@C=3.5em{
\bullet
	\ar@/^3ex/ [rr] ^-{} ^{}="src"
	\ar@/_3ex/ [rr] _-{} ^{}="tgt"
	\ar@2 "src"!<-15pt,-10pt>;"tgt"!<-15pt,10pt> _-{\beta} ^-{}="srcA"
	\ar@2 "src"!<10pt,-10pt>;"tgt"!<10pt,10pt> ^-{\alpha} _-{}="tgtA"
	\ar@3 "srcA"!<6pt,0pt> ; "tgtA"!<-6pt,0pt> ^-{A}
&& \bullet
&
   \ar@{|~>} [r] ^{\pi_A}
&
&
\bullet
	\ar@/^3ex/ [r] ^-{} ^{}="src"
	\ar@/_3ex/ [r] _-{} ^{}="tgt"
	\ar@2 "src"!<0pt,-10pt>;"tgt"!<0pt,10pt> ^-{\beta}
& \bullet
}
\] 
\item coherent adjunction $\iota_A : \tck{\Sigma}_3 \fl \tck{\Sigma}_3(A)$ of a redundant $3$-cell $A$:
\[
\xymatrix@C=3.5em{
\bullet
	\ar@/^4ex/ [rr] ^-{} ^{}="src"
	\ar@/_4ex/ [rr] _-{} ^{}="tgt"
	\ar@2 "src"!<-15pt,-10pt>;"tgt"!<-15pt,10pt> ^-{}="srcA"
	\ar@2 "src"!<10pt,-10pt>;"tgt"!<10pt,10pt> _-{}="tgtA"
	\ar@3 "srcA"!<6pt,0pt> ; "tgtA"!<-6pt,0pt> ^-{B}
&& \bullet
&
   \ar@{|~>} [r] ^{\iota_A}
&
&
\bullet
	\ar@/^4ex/ [rr] ^-{} ^{}="src"
	\ar@/_4ex/ [rr] _-{} ^{}="tgt"
	\ar@2 "src"!<-15pt,-10pt>;"tgt"!<-15pt,10pt> ^-{}="srcA"
	\ar@2 "src"!<10pt,-10pt>;"tgt"!<10pt,10pt> _-{}="tgtA"
	\ar@3 "srcA"!<7pt,-5pt> ; "tgtA"!<-7pt,-5pt> _-{A}
             \ar@3 "srcA"!<7pt,5pt> ; "tgtA"!<-7pt,5pt> ^-{B}
&& \bullet
}
\]
\item coherent elimination $\pi_{(B,A)} : \tck{\Sigma}_3 \fl \tck{\Sigma}_3/(B,A)$ of a redundant $3$-cell $A$, that maps $A$ to $B$:
\[
\xymatrix@C=3.5em{
\bullet
	\ar@/^4ex/ [rr] ^-{} ^{}="src"
	\ar@/_4ex/ [rr] _-{} ^{}="tgt"
	\ar@2 "src"!<-15pt,-10pt>;"tgt"!<-15pt,10pt> ^-{}="srcA"
	\ar@2 "src"!<10pt,-10pt>;"tgt"!<10pt,10pt> _-{}="tgtA"
	\ar@3 "srcA"!<7pt,-5pt> ; "tgtA"!<-7pt,-5pt> _-{A}
             \ar@3 "srcA"!<7pt,5pt> ; "tgtA"!<-7pt,5pt> ^-{B}
&& \bullet
&
   \ar@{|~>} [r] ^{\pi_{(B,A)}}
&
&
\bullet
	\ar@/^4ex/ [rr] ^-{} ^{}="src"
	\ar@/_4ex/ [rr] _-{} ^{}="tgt"
	\ar@2 "src"!<-15pt,-10pt>;"tgt"!<-15pt,10pt> ^-{}="srcA"
	\ar@2 "src"!<10pt,-10pt>;"tgt"!<10pt,10pt> _-{}="tgtA"
	\ar@3 "srcA"!<6pt,0pt> ; "tgtA"!<-6pt,0pt> ^-{B}
&& \bullet
}
\]
\end{enumerate}

For $(3,1)$-polygraphs $\Sigma$ and $\Upsilon$, a \emph{Tietze transformation from $\Sigma$ to $\Upsilon$} is a $3$-functor $F:\tck{\Sigma}_3 \fl \tck{\Upsilon}_3$ that decomposes into a sequence of elementary Tietze transformations.
Two $(3,1)$-polygraphs $\Sigma$ and $\Upsilon$ are \emph{Tietze-equivalent} if there exists an equivalence of $2$-categories~\mbox{$F : \tck{\Sigma}_2/\Sigma_3 \fl \tck{\Upsilon}_2/\Upsilon_3$} and the presented monoids $\cl{\Sigma}_2$ and $\cl{\Upsilon}_2$ are isomorphic.
Two $(3,1)$-polygraphs are Tietze equivalent if, and only if, there exists a Tietze transformation between them, {\cite[Theorem 2.1.3.]{GaussentGuiraudMalbos14}}. 

\subsubsection{Homotopical completion procedure}
\label{Subsubsection:HomotopicalCompletion}

Following {\cite[Section~2.2]{GaussentGuiraudMalbos14}}, we recall the homotopical completion procedure that produces a coherent convergent presentation from a terminating presentation.
Given a terminating $2$-polygraph $\Sigma$, equipped with a total termination order $\leq$, the homotopical completion of $\Sigma$ is the $(3,1)$-polygraph obtained from~$\Sigma$ by successive application of the Knuth-Bendix completion procedure,~\cite{KnuthBendix70}, and the Squier construction,~\cite{Squier94}. Explicitly, for any critical branching $(f,g)$ of $\Sigma$, if~$(f,g)$ is confluent one adds a dotted $3$-cell $A$:
\[
\xymatrix @C=3em @R=0.6em {
& {v}
	\ar@2@/^/ [dr] ^-{f'}
	\ar@3{.>} []!<0pt,-10pt>;[dd]!<0pt,10pt> ^-{A}
\\
{u}
	\ar@2@/^/ [ur] ^-{f}
	\ar@2@/_/ [dr] _-{g}
&& {\rep{u}}
\\
& {w}
	\ar@2@/_/ [ur] _-{g'}
}
\]
where $\rep{u}$ is a common normal form of $v$ and $w$, and if the critical branching $(f,g)$ is not confluent one add a $2$-cell $\beta$ and a $3$-cell $A$:
\[
\xymatrix @C=3em @R=0.6em {
& {v}
	\ar@2 [r] ^-{f'}
	\ar@3{.>} []!<10pt,-10pt>;[dd]!<10pt,10pt> ^-{A}
& {\rep{v}}
	\ar@2{<.>} [dd] ^-{\beta}
\\
{u}
	\ar@2@/^/ [ur] ^-{f}
	\ar@2@/_/ [dr] _-{g}
\\
& {w}
	\ar@2 [r] _-{g'}
& {\rep{w}}	
}
\]
where the $2$-cell $\beta$ is directed from a normal form~$\rep{v}$ of $v$ to a normal form $\rep{w}$ of $w$ if $\rep{v}>\rep{w}$ and from~$\rep{w}$ to $\rep{v}$ otherwise.
The adjunction of $2$-cells can create new critical branchings, possibly generating the adjunction of additional $2$-cells and $3$-cells in the same way. This defines an increasing sequence of $(3,1)$-polygraphs, whose union is called a \emph{homotopical completion} of $\Sigma$.
Following~{\cite[Theorem 5.2]{Squier94}}, such a homotopical completion of $\Sigma$ is a coherent convergent presentation of the monoid $\cl{\Sigma}$. We refer the reader to {\cite[Section 2.2]{GaussentGuiraudMalbos14}}.

\subsection{Column coherent presentation}
\label{SubSection:CoherentColumnPresentation}

Using the homotopical completion procedure, we extend the $2$-polygraph $\Colo_2(n)$ into a coherent presentation of the monoid $\P_n$.

\subsubsection{Column coherent presentation}

By definition of the rules $\alpha_{u,v}$ defined in~(\ref{rulesAlpha}), the presentation~$\Colo_2(n)$ has exactly one critical branching of the form
\begin{equation}
\label{CriticalBranchingCuCvCt}
\xymatrix @R=0.6em @C=3em {
& c_ec_{e'}c_t
\\
c_uc_vc_t
	\ar@2@/^/ [ur] ^-{\alpha_{u,v}c_t}
	\ar@2@/_/ [dr] _-{c_u\alpha_{v,t}}
\\
& c_uc_wc_{w'}
}
\end{equation}
for any $u$, $v$, $t$ in $\col(n)$ such that \typetroisbasep{$u$}{$v$}{$t$}{00}, where $e$ and $e'$ (resp. $w$ and $w'$) denote the two columns of the tableau $P(uv)$ (resp. $P(vt)$).
We prove in this section that all of these critical branchings are confluent. This gives an alternative proof of the confluence of the $2$-polygraph $\Colo_2(n)$ given in \ref{Subsubsection:ConfluenceColumnPresentation}.
Moreover, we prove that all the confluence diagrams of these branchings are of the following form:
\begin{equation}
\label{celluleX}
\xymatrix @!C @C=2.3em @R=0.6em {
&
c_{e} c_{e'}c_t
  \ar@2[r] ^-{c_{e}\alpha_{e',t}}
     \ar@3 []!<30pt,-8pt>;[dd]!<30pt,8pt> ^{\mathcal{X}_{u,v,t}}
&
c_{e} c_{b}c_{b'}
  \ar@2[dr] ^-{\;\;\;\;\alpha_{e,b}c_{b'}}
\\
c_uc_vc_t
  \ar@2[ur] ^-{\alpha_{u,v}c_t}
  \ar@2[dr] _-{c_u\alpha_{v,t}}
&&&
c_{a}c_{d} c_{b'}
\\
&
c_uc_{w}c_{w'}
  \ar@2[r] _-{\alpha_{u,w}c_{w'}}
&
c_{a}c_{a'}c_{w'}
 \ar@2[ur] _-{\;\;c_{a}\alpha_{a',w'}}
}	
\end{equation}
where~$a$ and $a'$ (resp. $b$ and $b'$) denote the two columns of the tableau $P(uw)$ (resp. $P(e't)$) and $a$, $d$, $b'$ are the three columns of the tableau $P(uvt)$, which is a normal form for the $2$-polygraph $\Colo_2(n)$. 
Note that in some cases described below, one or further columns $e'$, $w'$, $a'$ and $b'$ can be empty. In those cases some indicated $2$-cells $\alpha$ in the confluence diagram correspond to identities.

Let us denote by $\Colo_3(n)$ the extended presentation of the monoid $\P_n$ obtained from~$\Colo_2(n)$ by adjunction of one $3$-cell $\mathcal{X}_{u,v,t}$ of the form~(\ref{celluleX}), for every columns $u$, $v$ and $t$ such that~$\typetroisbasep{$u$}{$v$}{$t$}{00}.$

\begin{theorem}
\label{MainTheoremA}
For $n>0$, the $(3,1)$-polygraph $\Colo_3(n)$ is a coherent presentation of the monoid $\P_n$.
\end{theorem}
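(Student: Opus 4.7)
The strategy is to invoke Squier's theorem through the homotopical completion procedure recalled in~\ref{Subsubsection:HomotopicalCompletion}. Since $\Colo_2(n)$ is already convergent (Proposition~\ref{Proposition:ColumnPresentation} together with the termination and confluence established in Section~\ref{Section:ColumnPresentation}), the procedure adjoins no new $2$-cell and simply produces a coherent presentation by adding one $3$-cell for the confluence diagram of each critical branching. It therefore suffices to enumerate the critical branchings of $\Colo_2(n)$ and to determine the shape of their confluence diagrams.

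Every $2$-cell $\alpha_{u,v}$ of $\Colo_2(n)$ has source of length two in $\Colo_1(n)^\ast$, so an overlapping critical branching must share a middle column. The set of critical branchings is therefore in bijection with triples $(u,v,t)$ of columns satisfying \typetroisbasep{$u$}{$v$}{$t$}{00}, and each such branching is exactly the one displayed in~(\ref{CriticalBranchingCuCvCt}). For any such triple, the unique normal form property of Proposition~\ref{normalformproperty} guarantees that both branches eventually reach the common normal form $C(P(uvt))$, so the branching is confluent; the resulting $3$-cell is the generator $\mathcal{X}_{u,v,t}$ adjoined in $\Colo_3(n)$.

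The remaining task is to verify that $\mathcal{X}_{u,v,t}$ admits the hexagonal shape~(\ref{celluleX}). The key inputs are the associativity of Schensted's insertion, namely $P(uvt) = P(P(uv)\,t) = P(u\,P(vt))$, together with the inequality $\lnds(uvt) \leq 3$ which forces $P(uvt)$ to contain at most three columns, say $a,d,b'$ from left to right. Denoting by $e,e'$ the columns of $P(uv)$ and by $w,w'$ those of $P(vt)$, the upper branch of the hexagon rewrites $c_uc_vc_t$ to $c_ec_{e'}c_t$ via $\alpha_{u,v}c_t$, applies $c_e\alpha_{e',t}$ to reach $c_ec_bc_{b'}$ where $b,b'$ are the columns of $P(e't)$, and finally reduces the pair $(e,b)$ into the first two columns $(a,d)$ of $P(uvt)$. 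Symmetrically, the lower branch passes through $c_uc_wc_{w'}$, then $c_ac_{a'}c_{w'}$ with $(a,a')$ the columns of $P(uw)$, and the reduction of $(a',w')$ produces $(d,b')$. Both paths terminate at $c_ac_dc_{b'}=C(P(uvt))$ in exactly three steps, producing the hexagonal shape.

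The main obstacle is a careful case analysis establishing the existence of these intermediate rewrites: one must show that each of the four pairs $(e',t)$, $(e,b)$, $(u,w)$ and $(a',w')$ is of type $0$ (hence admits an $\alpha$-cell), and that in the degenerate situations where one of $e'$, $w'$, $a'$ or $b'$ is empty the corresponding $\alpha$-cell collapses to an identity and the hexagon degenerates accordingly. This splits into the four subtypes \typetroisbasep{$u$}{$v$}{$t$}{11}, \typetroisbasep{$u$}{$v$}{$t$}{12}, \typetroisbasep{$u$}{$v$}{$t$}{21} and \typetroisbasep{$u$}{$v$}{$t$}{22} according to the column structure of $P(uv)$ and $P(vt)$, and each subcase is resolved by direct inspection of Schensted's column insertion on three columns; the uniqueness of the normal form forces both branches to agree at the bottom vertex of the hexagon.
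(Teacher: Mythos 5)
Your overall strategy coincides with the paper's: invoke the homotopical completion procedure on the convergent polygraph $\Colo_2(n)$, identify the critical branchings with the triples $(u,v,t)$ such that \typetroisbasep{$u$}{$v$}{$t$}{00}, split into the four subtypes according to the shapes of $P(uv)$ and $P(vt)$, and check that each confluence diagram has the form~(\ref{celluleX}). The structural facts you isolate ($\lnds(uvt)\leq 3$, hence at most three columns in $P(uvt)$, and $P(uvt)=P(P(uv)t)=P(uP(vt))$) are correct and are indeed what the paper uses. The gap is that everything which makes the theorem nontrivial is deferred to ``direct inspection of Schensted's column insertion'', and the one concrete description you give of what that inspection must establish is wrong: it is \emph{not} true that the four pairs $(e',t)$, $(e,b)$, $(u,w)$ and $(a',w')$ are always of type~$0$, nor that the degeneracies occur only when a column is empty. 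When $uv$ and $vt$ are both columns the diagram is the square $A_{u,v,t}$ of Lemma~\ref{Lemma1}, with two rewriting steps on each side; in Case~2 of Lemma~\ref{Lemma4} the word $c_ec_bc_{b'}$ may already be a tableau, so that $(e,b)$ is of type~$1$ and $\alpha_{e,b}$ is an identity (diagram~(\ref{3-cellDprime})) even though no column is empty; and in Case~2.B of Lemma~\ref{Lemma2} the closing cell is $c_{uv}\alpha_{z_l\ldots z_{q+1},w'}$ rather than a cell of the form $c_a\alpha_{a',w'}$ with $a'$ a column of $P(uw)$. So the branches do not always take ``exactly three steps'', and establishing which pairs admit a rule and which do not is precisely the content of the case analysis you omit.

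More substantively, the unique normal form property of Proposition~\ref{normalformproperty} (together with termination) only guarantees that the two branches of~(\ref{CriticalBranchingCuCvCt}) can be prolonged to a common normal form; it does not identify the intermediate $1$-cells, nor does it show that the two specific reduction paths drawn in~(\ref{celluleX}) end at the \emph{same} word $c_ac_dc_{b'}$. The heart of the proof is the verification that $C(P(eb))=ad$ and $C(P(a'w'))=db'$ (and their analogues in the degenerate configurations), which the paper carries out in Lemmas~\ref{Lemma1}--\ref{Lemma4} by a case analysis on the relative lengths of $u$, $v$, $t$ and the inequalities among their entries. Without that analysis one cannot conclude that the $3$-cells $\mathcal{X}_{u,v,t}$, \emph{with the boundaries prescribed in}~(\ref{celluleX}), actually fill the generating confluences, which is what the Squier argument requires. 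Your proposal correctly names the skeleton of the proof but leaves its body unproved, and the plan it sketches for filling it in would fail as stated.
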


The extended presentation $\Colo_3(n)$ is called the \emph{column coherent presentation} of the monoid~$\P_n$. In~\ref{SubSection:Computations}, we give the values of number of cells of the coherent presentation $\Colo_3(n)$ for plactic monoids of low-dimensional rank~$n$. 
The rest of this section consists in the proof of Theorem~\ref{MainTheoremA}. It is based on the following arguments. The presentation $\Colo_2(n)$ is convergent, thus using the homotopical completion procedure described in~\ref{Subsubsection:HomotopicalCompletion}, it suffices to prove that the $3$-cells $\mathcal{X}_{u,v,t}$ with \typetroisbasep{$u$}{$v$}{$t$}{00} form a family of generating confluences for the presentation $\Colo_2(n)$.
There are four possibilities for the critical branching~(\ref{CriticalBranchingCuCvCt}) depending on the following four cases:
\[
\typetroisbasep{$u$}{$v$}{$t$}{11},
\qquad
\typetroisbasep{$u$}{$v$}{$t$}{21},
\qquad
\typetroisbasep{$u$}{$v$}{$t$}{12},
\qquad
\typetroisbasep{$u$}{$v$}{$t$}{22}.
\]
Each of these cases is examined in the following four lemmas.
In the rest of this section, we will suppose that
\[
u = x_{p}\ldots x_{1},
\qquad
v =y_{q}\ldots y_{1}
\qquad\text{and}\qquad
t = z_{l}\ldots z_{1} 
\]
denote columns of length $p$, $q$ and $l$ respectively.

\begin{lemma}
\label{Lemma1}
If \typetroisbasep{$u$}{$v$}{$t$}{11}, we have the following confluent critical branching:
\[
\xymatrix @!C @C=2.3em @R=0.6em {
&
c_{uv}c_t
   \ar@2[dr] ^-{\alpha_{uv,t}}
  \ar@3 []!<0pt,-10pt>;[dd]!<0pt,+10pt> ^{A_{u,v,t}} 
&
\\
c_uc_vc_t
  \ar@2[ur] ^-{\alpha_{u,v}c_t}
  \ar@2[dr] _-{c_u\alpha_{v,t}}
&&
c_{uvt}
\\
&
c_uc_{vt}
  \ar@2[ur] _-{\alpha_{u,vt}}
&
}	
\] 
\end{lemma}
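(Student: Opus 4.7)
The plan is to verify directly that under $\typetroisbasep{$u$}{$v$}{$t$}{11}$, both reduction paths starting at $c_u c_v c_t$ terminate at the single column generator $c_{uvt}$, so that the $3$-cell $A_{u,v,t}$ is well-defined as the $2$-sphere it bounds. Write $u = x_p \ldots x_1$, $v = y_q \ldots y_1$ and $t = z_l \ldots z_1$ as in the paragraph preceding the lemma.

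The key combinatorial observation is obtained by direct analysis of Schensted's algorithm: inserting the column $v$ into the column $u$ produces a single column precisely when every element of $v$ is strictly smaller than the minimum entry of $u$, since otherwise the top row of the tableau would acquire a second entry. Hence $P(uv)$ has one column if and only if $x_1 > y_q$, and symmetrically $P(vt)$ has one column if and only if $y_1 > z_l$. Combining these two inequalities yields the total ordering
\[
z_1 < \ldots < z_l < y_1 < \ldots < y_q < x_1 < \ldots < x_p,
\]
so the word $uvt = x_p \ldots x_1 y_q \ldots y_1 z_l \ldots z_1$ is itself strictly decreasing. It is therefore a column of length $p + q + l$, and $P(uvt)$ is exactly this column, which we denote $c_{uvt}$.

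It remains to check that the $2$-cells $\alpha_{uv,t}$ and $\alpha_{u,vt}$ are defined and both target $c_{uvt}$. For $\alpha_{uv,t}$: the top entry of the column $uv$ is $y_1$, which strictly exceeds the top entry $z_1$ of $t$, so the pair $(uv, t)$ satisfies $\typedeuxbase{$uv$}{$t$}{0}$; and since $P((uv)\,t) = uvt$ is a single column, the rule takes the form $\alpha_{uv,t} : c_{uv} c_t \dfl c_{uvt}$. A symmetric argument yields $\alpha_{u,vt} : c_u c_{vt} \dfl c_{uvt}$. The two composites $\alpha_{u,v} c_t \star_1 \alpha_{uv,t}$ and $c_u \alpha_{v,t} \star_1 \alpha_{u,vt}$ then share source $c_u c_v c_t$ and target $c_{uvt}$, and the $3$-cell $A_{u,v,t}$ of $\Colo_3(n)$ is adjoined to fill this $2$-sphere. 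This also shows that the branching is confluent, since both branches reach the same normal form.

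No real obstacle arises in this case: the hypothesis $\typetroisbasep{$u$}{$v$}{$t$}{11}$ forces a fully layered structure in which $c_{uvt}$ is manifestly the unique normal form of $c_u c_v c_t$, and the general hexagonal diagram of $\mathcal{X}_{u,v,t}$ collapses to the displayed square once the empty columns $e'$ and $w'$ trivialize the corresponding $2$-cells. The genuine work will lie in the subsequent lemmas corresponding to the cases $\typetroisp{12}$, $\typetroisp{21}$ and $\typetroisp{22}$, where the Schensted outputs $(e,e')$ and $(w,w')$ genuinely have two columns and the full hexagonal shape of $\mathcal{X}_{u,v,t}$ must be traced through the insertion algorithm.
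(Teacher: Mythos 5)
Your proof is correct and takes essentially the same route as the paper's: the hypothesis means $uv$ and $vt$ are both columns, hence $uvt$ is a column, so the pairs $(uv,t)$ and $(u,vt)$ are each of the ``one-column'' type and the rules $\alpha_{uv,t}$ and $\alpha_{u,vt}$ both target $c_{uvt}$, closing the diagram. The paper disposes of this in one line; you merely make explicit the chain of inequalities $z_l < y_1$ and $y_q < x_1$ and the verification that the two $2$-cells exist.
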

\begin{proof}
By hypothesis $uv$ and $vt$ are columns, then $uvt$ is a column. Thus \typedeuxbase{$uv$}{$t$}{01} and \typedeuxbase{$u$}{$vt$}{01} and there exist $2$-cells $\alpha_{uv,t}$ and $\alpha_{u,vt}$ in $\Colo_2(n)$ making the critical branching~(\ref{CriticalBranchingCuCvCt}) confluent, where $e=uv$, $w=vt$ and $e'$, $w'$ are the empty column.
\end{proof}

\begin{lemma}
\label{Lemma3}
If \typetroisbasep{$u$}{$v$}{$t$}{21}, we have the following confluent critical branching:
\[
\xymatrix @!C @C=1.4em @R=0.6em {
&
c_{e} c_{e'}c_t
  \ar@2[rr] ^-{c_{e}\alpha_{e',t}}
&
\ar@3 []!<0pt,-10pt>;[dd]!<0pt,10pt> ^-{B_{u,v,t}} 
&
c_{e} c_{e't}
  \ar@2[dr] ^-{\alpha_{e,e't}}
&
\\
c_uc_vc_t
  \ar@2[ur] ^-{\alpha_{u,v}c_t}
  \ar@2[drr] _-{c_u\alpha_{v,t}}
&&&&
c_{s}c_{s'}
\\
&&
c_uc_{vt}
  \ar@2[urr] _-{\alpha_{u,vt}}
&&
}	
\]
where $e$ and $e'$ (resp. $s$ and $s'$) denote the two columns of the tableau $P(uv)$ (resp. $P(uvt)$).
\end{lemma}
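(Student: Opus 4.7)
The plan is to fill in the displayed diagram by verifying, for each arrow, that the corresponding rule of $\Colo_2(n)$ exists with the indicated source and target; the common endpoint $c_sc_{s'}$ is then forced to be the normal form of $c_uc_vc_t$ by Proposition~\ref{normalformproperty}.

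First I would determine the shape of $P(uvt)$. Since $u$, $v$ and $t$ are each strictly decreasing words, every non-decreasing subsequence of $uvt$ picks at most one letter from each, so a length-$3$ non-decreasing subsequence would yield $b \in v$ and $c \in t$ with $b \leq c$, contradicting the hypothesis that $P(vt)$ is a single column, i.e.\ $\min(v) > \max(t)$. Combined with $\lnds(uvt) \geq \lnds(uv) = 2$, this gives $\lnds(uvt)=2$, so by Schensted's theorem $P(uvt)$ has exactly two columns, which we name $s$ and $s'$.

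Next, I would verify that $P(e't)$ is a single column, so that the $2$-cell $\alpha_{e',t} \colon c_{e'}c_t \dfl c_{e't}$ has the target drawn in the diagram. The idea is to prove $\min(e') \geq \min(v)$ and then combine it with $\min(v) > \max(t)$ to obtain $\lnds(e't)=1$. The inequality $\min(e') \geq \min(v)$ amounts to saying that every letter of the right column $e'$ of $P(uv)$ is a letter of $v$, which I would establish by induction on the number of letters of $v$ already inserted into the column $u$, with the invariant that every letter of $u$ remains in the left column of the growing tableau; a case analysis on each insertion shows that a letter of $u$, when bumped downwards, always lands again in the left column of the next row.

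For the two remaining $2$-cells $\alpha_{u,vt}$ and $\alpha_{e,e't}$, the targets are forced by the equalities $P(u \cdot vt) = P(e \cdot e't) = P(uvt)$, and the non-tableau conditions follow from the structure of the column $vt$: since $\min(v) > \max(t)$, its top $l$ entries are those of $t$ and its bottom $q$ are those of $v$. If $(u, vt)$ were in tableau form, the inequalities $u_i \leq (vt)_i$ together with $u_j \leq u_{l+j}$ (because $u$ is strictly increasing top to bottom) would force $u_j \leq v_j$ for every $j \leq q$, contradicting the hypothesis that $(u,v)$ is not in tableau form; the case of $(e,e't)$ is handled analogously. The main obstacle will be the bookkeeping of the Schensted invariant used to prove $\min(e') \geq \min(v)$; once it is in hand, the diagram closes by direct comparison of normal forms.
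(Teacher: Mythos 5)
Your overall strategy coincides with the paper's: establish $\lnds(uvt)=2$, show that every letter of the right column $e'$ of $P(uv)$ comes from $v$, so that $\min(e')\geq y_1>z_l$ makes $e't$ a column, and then identify both composites with the two columns $s,s'$ of $P(uvt)=P(ee't)=P(u\cdot vt)$. Your route to the key sub-claim is in fact cleaner than the paper's: where the paper splits \typedeuxbase{$u$}{$v$}{02} into three cases and locates the smallest element of $e'$ in each, you prove the uniform invariant that the letters of $u$ never leave the first column during the insertion of $v$ into $u$ (a column-$1$ entry, when bumped, always lands in column $1$ of the next row because the entry below it there is strictly larger), which gives $e'\subseteq v$ at once. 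You also verify explicitly that $(u,vt)$ is not in tableau form --- a point the paper passes over in silence --- and that argument is correct.

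The sentence ``the case of $(e,e't)$ is handled analogously'' hides a genuine gap, however. Running the same computation for $(e,e't)$, the tableau inequalities $e_i\leq (e't)_i$ combined with $e_j< e_{l+j}$ only yield $e_j\leq e'_j$ for $j\leq\len(e')$ and $\len(e)>\len(e')$, i.e.\ that the pair $(e,e')$ is in tableau form. But $(e,e')$ \emph{is} in tableau form --- it is $P(uv)$ --- so no contradiction arises and the argument proves nothing. The claim is still true, but it needs a different justification. For instance: since \typedeuxbase{$u$}{$v$}{0}, one has $e'\neq v$, hence $\len(e)>p$; by the dual Schensted statement, $\len(e)$ is the length of the longest strictly decreasing subsequence of $uv$, which therefore must end with a letter of $v$, and appending the $l$ letters of $t$ (all smaller than $y_1$) produces a strictly decreasing subsequence of $uvt$ of length $\len(e)+l$. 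Thus the left column $s$ of $P(uvt)$ is strictly longer than $e$, so $P(uvt)$ is not the juxtaposition of $e$ and $e't$, and $(e,e't)$ is of type $0$ as required. (To be fair, the paper does not address this point either and simply asserts the existence of the $2$-cell $\alpha_{e,e't}$; but as written, your ``analogous'' step would fail if carried out.)
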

\begin{proof}
By hypothesis, $vt$ is a column and~$y_{1}>z_{l}$. 
The tableau $P(uv)$ consists of two columns, that we will denote $e$ and $e'$, then $\lnds(uv)=2$ and~$x_{1}\leqslant y_{q}$.
We have \typedeuxbase{$u$}{$v$}{02}, so that we distinguish the   following possible three cases.

\medskip

\noindent {\bf Case 1:} $p\geq q$ and $x_{i_0}>y_{i_0}$ for some $1\leq i_0\leq q$.

Suppose that $i_0=1$, that is, $x_{1}>y_{1}$. We consider  $y_{j}$ the biggest element of the column $v$ such that~$x_{1}>y_{j}$, then the smallest element of the column~$e'$ is $y_{j+1}$.
By hypothesis, the word $vt$ is a column, in particular $y_{j+1}>z_{l}$. It follows that $e't$ is a column.
Suppose that $i_0>1$, then $x_{1}\leq y_{1}$ and the smallest element of $e'$ is $y_{1}$. 
Since~$y_{1}>z_{l}$ by hypothesis, the word $e't$ is a  column.
Hence, in all cases, $e't$ is a column and there is a $2$-cell $\alpha_{e',t}:c_{e'}c_t \dfl c_{e't}$.

\medskip

\noindent {\bf Case 2:} $p < q$ and $x_{i}\leq y_{i}$ for any $1\leq i\leq p$.

We have $e=y_{q}\ldots y_{p+1}x_{p}\ldots x_{1}$ and $e'=y_{p}\ldots y_{1}$.
By hypothesis, $y_{1}>z_{l}$, hence $e't$ is a column and there is a $2$-cell $\alpha_{e',t}: c_{e'}c_t \dfl c_{e't}$.

\medskip

\noindent {\bf Case 3:} $p < q$ and $x_{i_0}>y_{i_0}$ for some $1\leq i_0\leq p$.

With the same arguments of Case~1, the smallest element of $e'$ is $y_{1}$ or $y_{j+1}$, where~$y_{j}$ is the biggest element of  the column $v$ such that $y_{j}<x_{1}$. Hence, $e't$ is a column and there is a $2$-cell~$\alpha_{e',t} : c_{e'}c_t \dfl c_{e't}$.

\medskip

In case $1$,~$2$ and~$3$,  we have $\lnds(uv)=2$, hence $\lnds(uvt)=2$. Thus the tableau $P(uvt)$ consists of two columns, that we denote $s$ and $s'$ and there is a $2$-cell $\alpha_{u,vt}:c_u c_{vt} \dfl c_{s}c_{s'}$.
Moreover, to compute the tableau $P(uvt)$, one begins by computing $P(uv)$ and after by introducing the elements of the column $t$ on the tableau $P(uv)$. As $C(P(uv))=ee'$, we have $P(uvt) = P(P(uv)t) = P(ee't)$. Hence~$C(P(ee't))= ss'$ and there is a $2$-cell $\alpha_{e,e't}$ which yields the following confluence diagram
\begin{equation}
\label{figure1}
\xymatrix @C=4.2em @R=-4.5em @!C{
& {\begin{tabular}{|c|c|}
\hline
&\\
&\\
\mbox{\tiny$e$}&\mbox{\tiny$e'$}\\
& \\
\cline{2-2}
\\
\cline{1-1}
\end{tabular}
\;
\raisebox{0.35cm}{
\begin{tabular}{|c|}\hline
\mbox{\tiny$z_1$}\\ \hline
$\vdots$\\\hline
\mbox{\tiny$z_l$}\\\hline
\end{tabular}}}
  \ar@2[r] ^-{c_{e}\alpha_{e',t}}
&{\raisebox{0.35cm}{
  \begin{tabular}{|c|}\hline
  \\
  \\
 \mbox{\tiny$e$}
  \\
  \\ 
\\\hline
\end{tabular}}
\quad
\begin{tabular}{|c|}\hline
\mbox{\tiny$z_1$}\\\hline
$\vdots$\\\hline
\mbox{\tiny$z_l$}\\\hline
\\
\mbox{\tiny$e'$}
\\
\\
\hline
\end{tabular}}
\ar@2@/^10ex/[dd] ^-{\alpha_{e,e't}}
\\
{\begin{tabular}{|c|}\hline
\mbox{\tiny$x_1$}\\ \hline
\\
$\vdots$\\
\\
\hline
\mbox{\tiny$x_p$}\\\hline
\end{tabular}
\quad
\begin{tabular}{|c|}\hline
\mbox{\tiny$y_1$}\\ \hline
\\
$\vdots$\\
\\
\hline
\mbox{\tiny$y_q$}\\\hline
\end{tabular}
\quad
\raisebox{0.4cm}{
\begin{tabular}{|c|}\hline
\mbox{\tiny$z_1$}\\ \hline
$\vdots$\\\hline
\mbox{\tiny$z_l$}\\\hline
\end{tabular}}}
\ar@2@/^/ [ur] ^-{\alpha_{u,v}c_{t}}
 \ar@2@/_/ [dr] _-{c_{u}\alpha_{v,t}}
\\
& {\begin{tabular}{|c|}\hline
\mbox{\tiny$x_1$}\\ \hline
$\vdots$\\\hline
\mbox{\tiny$x_p$}\\\hline
\end{tabular}
\quad
\begin{tabular}{|c|}\hline
\mbox{\tiny$z_1$}\\ \hline
\\
$\vdots$\\
\\
\hline
\mbox{\tiny$z_l$}\\\hline
\mbox{\tiny$y_1$}\\ \hline
\\
$\vdots$\\
\\
\hline
\mbox{\tiny$y_q$}\\\hline
\end{tabular}}
 \ar@2[r] _-{\alpha_{u,vt}}
&{\begin{tabular}{|c|c|}
\hline
&\\
&\\
\mbox{\tiny$s$}&\mbox{\tiny$s'$}\\
& \\
\cline{2-2}
\\
\cline{1-1}
\end{tabular}}
}
\end{equation}
\end{proof}

\begin{lemma}
\label{Lemma2}
If \typetroisbasep{$u$}{$v$}{$t$}{12}, we have the following confluent critical branching:
\begin{equation}
\label{CelluleCuvt}
\xymatrix @!C @C=1.4em @R=0.6em {
&&
c_{uv}c_t
     \ar@3 []!<0pt,-10pt>;[dd]!<0pt,10pt> ^{C_{u,v,t}} 
  \ar@2[drr] ^{\alpha_{uv,t}}
&&
\\
c_uc_vc_t
  \ar@2[urr] ^{\alpha_{u,v}c_t}
  \ar@2[dr] _{c_u\alpha_{v,t}}
&&&&
c_{a}c_{a'w'}
\\
&
c_uc_{w}c_{w'}
  \ar@2[rr] _{\alpha_{u,w}c_{w'}}
&&
c_{a}c_{a'}c_{w'}
 \ar@2[ur] _{c_{a}\alpha_{a',w'}}
&
}	
\end{equation}
where $w$ and $w'$ (resp. $a$ and $a'$) denote the two columns of the tableau $P(vt)$ (resp. $P(uw)$).
\end{lemma}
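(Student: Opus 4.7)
The proof constructs the $3$-cell $C_{u,v,t}$ witnessing the confluence of the critical branching at $c_uc_vc_t$, in the spirit of Lemmas~\ref{Lemma1} and~\ref{Lemma3}. I would identify the two columns of $P(uvt)$ with the targets of the two reduction paths and verify each edge of the hexagon.

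I would first show that $P(uvt)$ has exactly two columns. Since $uv$ is a column, every element of $u$ strictly exceeds every element of $v$, so $\lnds(uv)=1$; together with $t$ being a column, any non-decreasing subsequence of $uvt$ contains at most one letter from $uv$ and one from $t$, yielding $\lnds(uvt)\leq 2$. This bound is saturated because $\lnds(vt)=2$ (as $vt$ is not a column), so $P(uvt)$ has exactly two columns, which I denote $L$ (left) and $R$ (right).

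Next, I would check the existence of each edge of the hexagon. The rule $\alpha_{u,v}:c_uc_v\dfl c_{uv}$, arising from the case $\typedeuxbase{$u$}{$v$}{01}$, reduces $c_uc_vc_t$ to $c_{uv}c_t$; the subsequent arrow $\alpha_{uv,t}$ is either a genuine rule producing $c_Lc_R$ or, when $(uv)\cdot t$ is already the tableau $uv\,|\,t$, the identity on $c_{uv}c_t$ (in which case $L=uv$ and $R=t$, following the paper's convention on identity $2$-cells when some columns are empty). For the lower path, $c_u\alpha_{v,t}$ sends $c_uc_vc_t$ to $c_uc_wc_{w'}$. The rule $\alpha_{u,w}$ then exists because the smallest entry of $w$ equals $\min(vt)\leq y_1<x_1$, which violates the row condition for $u\,|\,w$ to be a valid tableau; applying $\alpha_{u,w}c_{w'}$ takes us to $c_ac_{a'}c_{w'}$, where $a,a'$ are the two columns of $P(uw)$, with $a'$ possibly empty when $uw$ is itself a column.

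The main obstacle is to prove that $a'w'$ is a column and that $c_ac_{a'w'}=c_Lc_R$, so that $c_a\alpha_{a',w'}$ closes the hexagon on the nose. I would establish this by a direct case analysis paralleling Lemma~\ref{Lemma3}, split on the three subtypes of $\typedeuxbase{$v$}{$t$}{02}$: the case $q\geq l$ with a crossover in the row condition, the case $q<l$ without crossover, and the case $q<l$ with a crossover. In each subcase, I would track the Schensted algorithm step by step through the insertion of $t$ into the column $v$ (producing $w$ and $w'$) and then of $w$ into the column $u$ (producing $a$ and $a'$), and verify that the maximum entry of $w'$ is strictly less than the minimum entry of $a'$, so that $a'w'$ is strictly decreasing and hence a column; and that the two-column tableau $a\,|\,a'w'$ coincides with $P(uvt)$, yielding $a=L$ and $a'w'=R$. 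With these identifications the diagram commutes and defines the generating $3$-cell $C_{u,v,t}$.
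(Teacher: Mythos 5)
Your proposal takes essentially the same route as the paper's proof: the same case split on the three subtypes of the pair $(v,t)$, the same key claims (existence of $\alpha_{u,w}$ because $\min(w)<x_1$, that $a'w'$ is a column because $\min(a')>\max(w')$, and that $c_ac_{a'w'}$ recovers the two columns of $P(uvt)$), and the same handling of the degenerate subcase where $c_{uv}c_t$ is already a normal form, which is the paper's $3$-cell $C'_{u,v,t}$ with $a=uv$ and $a'w'=t$. You defer the subcase-by-subcase Schensted computations that occupy most of the paper's proof, but the criteria you propose to verify in each subcase are exactly the ones the paper checks.
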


\begin{proof}
By hypothesis, $uv$ is a column hence $x_{1}>y_{q}$. Moreover, the tableau $P(vt)$ consists of two columns $w$ and $w'$, then $\lnds(vt)=2$, hence $y_{1}\leqslant z_{l}$. 
We have \typedeuxbase{$v$}{$t$}{02}, so that we distinguish the three possible following cases.

\medskip

\noindent {\bf Case 1:} $q\geq l$ and $y_{i_{0}}>z_{i_{0}}$  for some  $1\leq i_{0}\leq l$.

Let us denote $w=w_r\ldots w_1$ and $w'=w'_{r'}\ldots w_1'$.
Since $q\geq l$,  we have $w_r=y_{q}$. By hypothesis,~$x_{1}>y_{q}$. Then the word $uw$ is a column. As a consequence, there is a $2$-cell $\alpha_{u,w}:c_uc_w  \dfl c_{uw}$.
In addition, the column $w$ appears to the left of $w'$ in the planar representation of the tableau $P(vt)$, that is,~$\len(w) \geq \len(w')$ and $w_{i}\leq w_{i}'$ for any $i\leq \len(w')$. Then $\len(uw)\geq \len(w')$. We set $uw=\xi_{\len(uw)}\ldots \xi_{1}$ and we have $\xi_{i}\leq w_{i}'$ for any $i\leq \len(w')$. 
Then \typedeuxbase{$uw$}{$w'$}{1} and $c_{uw}c_{w'}$ is a normal form.

On the other hand, the tableau  $P(vt)$ consists of two columns, hence $\lnds(vt)=2$. As a consequence,~$\lnds(uvt)=2$ and the tableau $P(uvt)$ consists of two columns.
Since $q\geq l$, we have $C(P(uvt))=uww'$, hence the two columns of $P(uvt)$ are $uw$ and $w'$. Then there is a $2$-cell $\alpha_{uv,t}:c_{uv}c_{t} \dfl c_{uw}c_{w'}$ which yields the confluence of the critical branching on $c_uc_vc_t$, as follows

\begin{equation}
\label{figure2}
\xymatrix @C=5em @R=-6em @!C{
& 
{\begin{tabular}{|c|}\hline
\mbox{\tiny$y_1$}\\ \hline
\\
$\vdots$\\
\\
\hline
\mbox{\tiny$y_q$}\\\hline
\mbox{\tiny$x_1$}\\ \hline
\\
$\vdots$\\
\\
\hline
\mbox{\tiny$x_p$}\\\hline
\end{tabular}
\;
\raisebox{1.15cm}{
\begin{tabular}{|c|}\hline
\mbox{\tiny$z_1$}\\ \hline
$\vdots$\\\hline
\mbox{\tiny$z_l$}\\\hline
\end{tabular}}}
  \ar@2[dr] ^-{\mbox{\small$\alpha_{uv,t}$}}
\\
{\begin{tabular}{|c|}\hline
\mbox{\tiny$x_1$}\\ \hline
\\
$\vdots$\\
\\
\hline
\mbox{\tiny$x_p$}\\\hline
\end{tabular}
\;
\begin{tabular}{|c|}\hline
\mbox{\tiny$y_1$}\\ \hline
\\
$\vdots$\\
\\
\hline
\mbox{\tiny$y_q$}\\\hline
\end{tabular}
\;
\raisebox{0.3cm}{
\begin{tabular}{|c|}\hline
\mbox{\tiny$z_1$}\\ \hline
$\vdots$\\\hline
\mbox{\tiny$z_l$}\\\hline
\end{tabular}}}
\ar@2@/^/ [ur] ^-{\mbox{\small$\alpha_{u,v}c_t$}}
 \ar@2@/_/ [dr] _-{\mbox{\small$c_u\alpha_{v,t}$}}
&&{\begin{tabular}{|c|c|}
\hline
&\\
&\\
\mbox{\tiny$w$}&\mbox{\tiny$w'$}\\
& \\
\cline{2-2}
\\ 
\cline{1-1}
\mbox{\tiny$x_1$}\\ \cline{1-1}
\\
$\vdots$\\
\\
\cline{1-1}
\mbox{\tiny$x_p$}\\\cline{1-1}
\end{tabular}}
\\
& {\footnotesize \raisebox{0.05cm}{
\begin{tabular}{|c|}
\hline
\mbox{\tiny$x_1$}\\
\hline
\\
$\vdots$\\
\\
\hline
\mbox{\tiny$x_p$}\\
\hline
\end{tabular}}
\;
\begin{tabular}{|c|c|}
\hline
&\\
&\\
\mbox{\tiny$w$}&\mbox{\tiny$w'$}\\
& \\
\cline{2-2}
\\ 
\cline{1-1}
\end{tabular}}
 \ar@2[ur] _-{\mbox{\small$\alpha_{u,w}c_{w'}$}}
}
\end{equation}

\medskip

\noindent {\bf Case 2:} $q<l$ and  $y_{i} \leq z_{i} $ for any $i\leq q$. 

We have $w=z_{l}\ldots z_{q+1}y_{q}\ldots y_{1}$ and $w'=z_{q}\ldots z_{1}$.
There are two cases along 
\[
uw=x_{p}\ldots x_{1}z_{l}\ldots z_{q+1}y_{q}\ldots y_{1} 
\]
is a column or not.

\medskip

{\bf Case 2. A.} If $x_{1}>z_{l}$, then $uw$ is a column. Hence, there is a $2$-cell $\alpha_{u,w}: c_uc_w \dfl c_{uw}$. Moreover, using Schensted's algorithm we prove that $C(P(uvt)) = uww'$, it follows that the columns of $P(uvt)$ are $uw$ and $w'$. Thus there is a $2$-cell $\alpha_{uv,t}:c_{uv}c_t \dfl c_{uw}c_{w'}$ which yields the confluence diagram~(\ref{figure2}).

\medskip

{\bf Case 2. B.} If $x_{1}\leq z_{l}$, then $\lnds(uw)=2$ and $P(uw)$ consists of two columns, that we denote by~$a$ and $a'$. Then there is a $2$-cell $\alpha_{u,w} : c_uc_w \dfl c_{a}c_{a'}$. 
In addition, by Schensted's algorithm, we deduce  that~ $a'=z_{i_{k}}\ldots z_{i_{1}}$, with $q+1\leq i_1 < \ldots < i_k \leq l$.  We have $a'w' = z_{i_{k}}\ldots z_{i_{1}}z_{q}\ldots z_{1}$. Since all the elements of $a'$ are elements of $t$ and bigger than $z_{q}$,  we have $z_{i_{1}}>z_{q}$. It follows that $a'w'$ is a column and there is a $2$-cell $\alpha_{a',w'}:c_{a'}c_{w'} \dfl c_{a'w'}$.

In the other hand, we have two cases whether \typedeuxbase{$uv$}{$t$}{0} or $\typedeuxbase{$uv$}{$t$}{1}.$
Suppose \typedeuxbase{$uv$}{$t$}{0}. By Schensted's algorithm, we have $C(P(uvt)) = aa'w'$, showing that the two columns of $P(uvt)$ are $a$ and $a'w'$.  Hence there is a $2$-cell $\alpha_{uv,t}:c_{uv}c_t \dfl c_ac_{a'w'}$, which yields the confluence of Diagram~(\ref{CelluleCuvt}).
Suppose~$\typedeuxbase{$uv$}{$t$}{1}.$  Then we obtain $C(P(uw))= uvz_{l}\ldots z_{q+1}$, and $C(P(z_{l}\ldots z_{q+1}w')) = t$.
Hence there is a $2$-cell~$\alpha_{z_{l}\ldots z_{q+1}, w'}$ yielding the confluence diagram
\[
\xymatrix @C=3em @R=0.6em {
& {c_{uv}c_{t}}
\ar@3 []!<0pt,-10pt>;[dd]!<0pt,10pt> ^{C'_{u,v,t}}
\\
{c_{u}c_{v}c_{t}}
	\ar@2@/^/ [ur] ^-{\alpha_{u,v}c_{t}}
	\ar@2@/_/ [dr] _-{c_{u}\alpha_{v,t}}
\\
& {c_{u}c_{w}c_{w'}}
	\ar@2 [r] _-{\alpha_{u,w}c_{w'}}
& {c_{uv}c_{z_{l}\ldots z_{q+1}}c_{w'}}
	\ar@2 [luu] _-{\;\;c_{uv}\alpha_{z_{l}\ldots z_{q+1}, w'}}
}
\]

\medskip

\noindent {\bf Case 3:} $q<l$ and  $y_{i_{0}}>z_{i_{0}}$  for some $1\leq i_{0}\leq q$.

We compute the columns $w$ and $w'$ of the tableau $P(vt)$. If the biggest element of the column $w$ is~$y_{q}$, then we obtain the same confluent branching as in Case 1. If the first element of $w$ is~$z_{l}$, then one obtains the same confluent critical branchings as in Case 2.
\end{proof}

\begin{lemma}
\label{Lemma4}
If \typetroisbasep{$u$}{$v$}{$t$}{22}, we have the following confluent critical branching:
\begin{equation}
\label{celluleDuvt}
\xymatrix @!C @C=2.3em @R=0.6em {
&
c_{e} c_{e'}c_t
  \ar@2[r] ^{c_{e}\alpha_{e',t}}
     \ar@3 []!<40pt,-10pt>;[dd]!<40pt,10pt> ^{D_{u,v,t}} 
&
c_{e} c_{b}c_{b'}
  \ar@2[dr] ^{\alpha_{e,b}c_{b'}}
\\
c_uc_vc_t
  \ar@2[ur] ^{\alpha_{u,v}c_t}
  \ar@2[dr] _{c_u\alpha_{v,t}}
&&&
c_{a}c_{d} c_{b'}
\\
&
c_uc_{w}c_{w'}
  \ar@2[r] _{\alpha_{u,w}c_{w'}}
&
c_{a}c_{a'}c_{w'}
 \ar@2[ur] _{c_{a}\alpha_{a',w'}}
}	
\end{equation}
where $e$, $e'$ (resp. $w$, $w'$) denote the two columns of the tableau $P(uv)$ (resp. $P(vt)$) and $a$, $a'$ (resp. $b$, $b'$) denote the two columns of the tableau $P(uw)$ (resp. $P(e't)$).
\end{lemma}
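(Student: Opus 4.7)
My strategy is to identify the common normal form of $c_uc_vc_t$ under $\Colo_2(n)$, then verify that both rewriting paths of the purported hexagon converge to it through the stated intermediate shapes. Throughout, I rely on the termination order $\ll$ and on the unique normal form property (Proposition~\ref{normalformproperty}), which together give a confluent reduction to the tableau $P(uvt)$.

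\textbf{Step 1 (the shape of $P(uvt)$).} Since $u$, $v$, $t$ are each strictly decreasing, any non-decreasing subsequence of $uvt$ picks at most one element from each of them, so $\lnds(uvt)\leq 3$. Consequently $P(uvt)$ has at most three columns, which I denote from left to right by $a$, $d$, $b'$ (some possibly empty, as allowed by the conventions in~\eqref{celluleX}). By Schensted's insertion, the three-column tableau admits the two equivalent descriptions
\[
P(uvt) \;=\; P\bigl(P(uv)\cdot t\bigr) \;=\; P(e\,e'\,t)
\qquad\text{and}\qquad
P(uvt) \;=\; P\bigl(u\cdot P(vt)\bigr) \;=\; P(u\,w\,w').
\]

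\textbf{Step 2 (the four auxiliary $2$-cells).} I need to check that the pairs $(e',t)$, $(u,w)$, $(e,b)$, $(a',w')$ all have type $\typedeuxbase{$\phantom{x}$}{$\phantom{x}$}{0}$, and identify the resulting columns. For $(e',t)$: the column $e'$ consists of the rightmost column of $P(uv)$, whose bottom entry is either $y_1$ or some $y_{j+1}$, and since $\lnds(vt)=2$ these entries interact with $t$ exactly as in the subcases analysed in Lemma~\ref{Lemma2}, producing $P(e't)=bb'$. For $(u,w)$ the argument is symmetric, producing $P(uw)=aa'$. For $(e,b)$: since $eb$ appears as a prefix in the two computations of $P(uvt)$ from Step~1 (namely $P(e\cdot bb')$ and, after bumping out $b'$, reducing to inserting $b$ into $e$), the Schensted tableau $P(eb)$ is the left two-column sub-tableau of $P(uvt)$, namely $ad$. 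The case of $(a',w')$ is analogous and gives $P(a'w')=db'$.

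\textbf{Step 3 (closing the hexagon).} Combining Steps 1 and 2, both branches of the diagram rewrite to $c_ac_dc_{b'}$:
\[
c_ec_{e'}c_t \;\overset{c_e\alpha_{e',t}}{\dfll}\; c_ec_bc_{b'} \;\overset{\alpha_{e,b}c_{b'}}{\dfll}\; c_ac_dc_{b'},
\qquad
c_uc_wc_{w'} \;\overset{\alpha_{u,w}c_{w'}}{\dfll}\; c_ac_{a'}c_{w'} \;\overset{c_a\alpha_{a',w'}}{\dfll}\; c_ac_dc_{b'}.
\]
Since $\Colo_2(n)$ terminates and has the unique normal form property, and since $c_ac_dc_{b'}$ is already in normal form (it is the column decomposition of the tableau $P(uvt)$), this closes the critical branching and defines the $3$-cell $D_{u,v,t}$ as in~\eqref{celluleDuvt}.

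\textbf{Main obstacle.} The delicate part is Step~2: showing that the columns $a,d,b'$ of $P(uvt)$ really factor through the intermediate pairs $(a,a')$ and $(b,b')$ in the prescribed way, i.e.\ that $P(eb)$ has $a$ as its left column and $P(a'w')$ has $b'$ as its right column. This requires a case split on the relative lengths of $u,v,t$ and on where the inequalities among the $x_i,y_j,z_k$ switch direction, analogous to the three-case analysis performed in Lemma~\ref{Lemma2}, and additionally a discussion of the degenerate situations where $a'$, $w'$, or $b'$ is empty (in which case the relevant $2$-cells degenerate to identities). The cleanest formulation is via Schensted's bumping: tracking which entries of $u,v,t$ become the left column $a$ of $P(uvt)$ and which become the rightmost column $b'$ shows directly that the two factorisations agree on the middle column $d$.
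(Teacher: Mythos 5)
Your high-level strategy --- factor $P(uvt)$ both as $P(ee't)$ and as $P(uww')$ and match the intermediate columns --- is the same skeleton as the paper's proof, and Steps 1 and 3 are fine (the appeal to termination plus the unique normal form property is legitimate but also redundant once Step 2 is done, since Step 2 already exhibits the common target). The problem is that Step 2, which you yourself flag as the delicate part, is where essentially all of the content of the lemma lives, and two of its assertions are not merely unproven but false as stated. First, it is not true that the four pairs $(e',t)$, $(u,w)$, $(e,b)$, $(a',w')$ always have type $0$: in the paper's Case 2 (and again in Cases 3 and 4) the word $c_ec_bc_{b'}$ can already be a tableau, in which case there is no $2$-cell $\alpha_{e,b}$ at all, one gets $a=e$ and $P(a'w')=bb'$, and the hexagon degenerates to the pentagon $D^{(1)}_{u,v,t}$; in Case 4 the tableau $P(e't)$ can consist of a single column $e't$ (when $z_l<y_{j+1}$), producing the shapes $D^{(3)}_{u,v,t}$ and $D^{(4)}_{u,v,t}$. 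These degeneracies are not covered by your remark that some columns may be empty: they concern pairs being of type $1$ rather than type $0$, and deciding which situation occurs is exactly what forces the case split on $p,q,l$ and on where the inequalities $x_i\leq y_i$ and $y_j\leq z_j$ fail.

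Second, your argument that $P(eb)=ad$ because $eb$ ``appears as a prefix'' in the computation of $P(uvt)$ has a gap. What is true is $P(uvt)=P(ebb')=P\bigl(P(eb)\,b'\bigr)$; but to conclude that $P(eb)$ equals the left two columns of $P(uvt)$ you must also know that appending $b'$ to the two-column tableau $P(eb)$ causes no further bumping, i.e.\ that $P(eb)\,b'$ is already a tableau. That is precisely what the paper verifies by writing out $a$, $a'$, $b$, $b'$, $d$, $d'$, $s$, $s'$ explicitly in each case and checking identities such as $a=s$, $d=s'$ and $d'=b'$. Without that verification (or an appeal to cancellativity of the plactic monoid, which is not available in the paper), the identification of the middle column $d$ along the two branches is not established. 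So the proposal is a correct plan with the right factorisations, but the actual content of the lemma --- which $2$-cells exist, which degenerate to identities, and why the two length-two reduction paths meet at $c_ac_dc_{b'}$ --- is deferred to a case analysis that you do not carry out.
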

\begin{proof}
By hypothesis,  $\lnds(uv)=2$ and $\lnds(vt)=2$, hence $x_{1}\leqslant y_{q}$ and $y_{1}\leqslant z_{l}$. 
In addition, since~\typedeuxbase{$u$}{$v$}{02}, the tableau $P(uw)$ consists of two columns, that we denote by $a$ and $a'$.  Thus there is a  $2$-cell $\alpha_{u,w}:c_uc_w \dfl c_ac_{a'}$.
Moreover, as \typedeuxbase{$u$}{$v$}{02} and \typedeuxbase{$v$}{$t$}{02}, we have 

\begin{center}
(($p<q$)  or ($x_{i_0}> y_{i_0}$ for some $i_0\leqslant q$))
\quad 
and 
\quad
(($q<l$)  or ($y_{j_0}> z_{j_0}$ for some $j_0\leqslant l$)), 
\end{center}

\noindent thus we consider the following cases.

\medskip

\noindent {\bf Case 1:}  $p<q < l$ and  $y_{i}\leqslant z_{i}$, for all $i\leqslant q$, and $x_{i}\leqslant y_{i}$, for all $i\leqslant p$.

We have
\[
w=~z_{l}\ldots z_{q+1}y_{q}\ldots y_{1},
\quad
w'=~z_{q}\ldots z_{1},
\quad
e=~y_{q}\ldots y_{p+1}x_{p}\ldots x_{1}
\quad\text{and}\quad
e'=~y_{p}\ldots y_{1}.
\]
Since  $z_{l}\geqslant y_{1}$, the tableau  $P(e't)$ consists of two columns, that we denote by $b$ and $b'$. Thus there is a $2$-cell $\alpha_{e',t}:~c_{e'}c_t\dfl c_bc_{b'}$. In addition, we have
\[
b=~z_{l}\ldots z_{p+1}y_{p}\ldots y_{1},
\quad
b'= z_{p}\ldots z_{1},
\quad
a=~z_{l}\ldots z_{q+1}y_{q}\ldots y_{p+1}x_{p}\ldots x_{1}
\quad\text{and}\quad
a'=~y_{p}\ldots y_{1}.
\] 
Since $z_{q}\geqslant y_{1}$, the tableau $P(a'w')$ consists of two columns, that we denote by $d$ and $d'$. Thus there is a  $2$-cell $\alpha_{a',w'}:c_{a'}c_{w'}\dfl c_dc_{d'}$. Since $z_{l}\geqslant x_{1}$,  the tableau $P(e b)$ consists of two columns, that we denote by $s$ and $s'$. Then  there is a  $2$-cell $\alpha_{e,b}:c_ec_b\dfl~c_sc_{s'}$.
In the other hand, we have
\[
d=z_{q}\ldots z_{p+1}y_{p}\ldots y_{1},
\;
d'=z_{p}\ldots z_{1}, 
\;
s=z_{l}\ldots z_{q+1}y_{q}\ldots y_{p+1}x_{p}\ldots x_{1}
\;\text{and}\;
s'=z_{q}\ldots z_{p+1}y_{p}\ldots y_{1}.
\]
Hence $a = s$, $d = s'$ and  $d' = b'$ which yields the confluence diagram~(\ref{celluleDuvt}).

\medskip

\noindent {\bf Case 2:}
$\left\{
\begin{array}{l}
q < l \;\; \text{and}\; y_{i}\leqslant z_{i}\;\text{for all}\; i\leqslant q\\ 
p\geqslant q \;\text{and}\; x_{i_0}> y_{i_0}\;\text{for some}\;\ i_0\leqslant q
\end{array}\right.$
or
$\left\{\begin{array}{l}
q < l\;\; \text{and} \;y_{i}\leqslant z_{i}\;\text{for all}\; i\leqslant q\\
p < q\;\; \text{and} \; x_{i_0}>y_{i_0}\;\text{for some}\; i_0\leqslant p\end{array}\right.$ 

\medskip

We have $w=z_{l}\ldots z_{q+1}y_{q}\ldots y_{1}$ and $w'=z_{q}\ldots z_{1}$. Using Schensted's algorithm  the smallest element  of the column $a'$ is an element of $v$. Since $z_{q}$ is greater or equal than each element of $v$,  the tableau $P(a'w')$ consists of two columns, that we denote by~$d$ and~$d'$.

On the other hand, all the elements  of $e'$ are elements of  $v$. Since $z_{l}$ is bigger than each element of  $v$, the tableau $P(e't)$ consists of two columns, that we denote by $b$ and $b'$. Thus there is a  $2$-cell~$\alpha_{e',t}:c_{e'}c_t\dfl c_{b}c_{b'}$. Hence, we consider two cases depending on whether or not $c_{e}c_{b}c_{b'}$ is a tableau. Suppose $c_{e}c_{b}c_{b'}$ is a tableau. The column $e$ does not contain elements from the column $t$,  then during inserting the column $w$ into the column $u$, we can only insert some elements of $y_{q}\ldots y_{1}$ into $u$ and we obtain $a=e$. Since~$c_{e}c_{b}c_{b'}$ is the unique tableau obtained from~$c_uc_vc_t$ and~$a=e$, we obtain~$C(P(a'w')) = bb'$.  As a consequence, there is a  $2$-cell $\alpha_{a',w'}:c_{a'}c_{w'}\dfl c_bc_{b'}$ yielding the following confluence diagram:

\begin{equation}
\label{3-cellDprime}
\xymatrix @!C @C=2.3em @R=0.6em {
&
c_{e} c_{e'}c_t
  \ar@2[r] ^{c_{e}\alpha_{e',t}}
     \ar@3 []!<10pt,-10pt>;[dd]!<10pt,10pt> ^{D^{(1)}_{u,v,t}} 
&
c_{e} c_{b}c_{b'}
\\
c_uc_vc_t
  \ar@2[ur] ^{\alpha_{u,v}c_t}
  \ar@2[dr] _{c_u\alpha_{v,t}}
&&&
\\
&
c_uc_{w}c_{w'}
  \ar@2[r] _{\alpha_{u,w}c_{w'}}
&
c_{a}c_{a'}c_{w'}
 \ar@2[uu] _{c_{a}\alpha_{a',w'}}
}
\end{equation}
Suppose $c_{e}c_{b}c_{b'}$ is not a tableau. The first element of the column $b$ is $z_{l}$. The smallest element of the column $e$ is either $x_{1}$ or $y_{j}$, where $y_{j}$ is the biggest element of the column $v$ such that $y_{j}<x_{1}$. By hypothesis the tableau $P(uw)$ consists of two columns, then $x_{1}\leq z_{l}$. In addition, $z_{l}$ is greater than each element of $v$ then $y_{j}\leq z_{l}$. Hence, in all cases, the tableau $P(eb)$ consists of two columns. On the other hand, using Schensted's algorithm,  we have $a'= z_{i_{k}}\ldots z_{i_{1}}y_{j_{k'}}\ldots y_{j_{1}}$ with $q+1\leq i_{1}<\ldots <i_{k}\leq l$, $1\leq j_{1}< \ldots <j_{k'}\leq q$ and we have  $e'= y_{j_{k'}}\ldots y_{j_{1}}$. In addition, we have $b'=d'=z_{i_{k''}}\ldots z_{i_{1}}$ with $1\leq i_{1}<\ldots <i_{k''}\leq q$ and $C(P(eb))=ad$. Hence there is a $2$-cell $\alpha_{e,b}:c_{e}c_{b}\dfl c_{a}c_{d}$ which yields the confluence diagram~(\ref{celluleDuvt}).

\medskip

\noindent {\bf Case 3:}
$\left\{
\begin{array}{l}
 q\geqslant l    \;\; \text{and}\; y_{i_0}> z_{i_0}\;\text{for some }\; i_0\leqslant l\\ 
  p < q     \;\; \text{and}\; x_{i}\leqslant  y_{i}\; \text{for all}\;\ i\leqslant p
\end{array}\right.$
or
\;
$\left\{\begin{array}{l}
q < l\;\; \text{and} \; y_{i_0} > z_{i_0}\;\text{for some}\;\; i_0\leqslant q\\
p < q\;\; \text{and} \; x_{i}\leqslant y_{i}\;\text{for all }\;\; i\leqslant p
\end{array}\right.$ 

We have $e = y_{q}\ldots y_{p+1}x_{p}\ldots x_{1}$ and  $e'= y_{p}\ldots y_{1}$. Since  $y_{1}\leqslant z_{l}$, the tableau $P(e't)$ consists of two columns, that we denote by $b$ and $b'$. The first element of the column $b$ is either $z_{l}$ or~$y_{p}$ which are bigger or equal to $x_{1}$, then the tableau $P(eb)$ consists of two columns, that we denote by $s$ and $s'$. Suppose $l\leq p$. Then by Schensted's insertion algorithm, we have $C(P(e't))= bw'$ and $w=y_{q}\ldots y_{p+1}b$. On the other hand, since $x_{p}<y_{p+1}$, we have $P(uw) = P(u(y_{q}\ldots y_{p+1}b))= P(eb)$. Hence, there is a $2$-cell $\alpha_{e,b}:c_{e}c_{b}\dfl c_{a}c_{a'}$ which yields the confluence diagram: 
\begin{equation}
\label{3-cellD2prime}
\xymatrix @!C @C=2.3em @R=0.6em {
&
c_{e} c_{e'}c_t
  \ar@2[r] ^{c_{e}\alpha_{e',t}}
     \ar@3 []!<10pt,-10pt>;[dd]!<10pt,10pt> ^{D^{(2)}_{u,v,t}} 
&
c_{e} c_{b}c_{w'}
  \ar@2[dd] ^{\alpha_{e,b}c_{w'}}
\\
c_uc_vc_t
  \ar@2[ur] ^{\alpha_{u,v}c_t}
  \ar@2[dr] _{c_u\alpha_{v,t}}
&&&
\\
&
c_uc_{w}c_{w'}
  \ar@2[r] _{\alpha_{u,w}c_{w'}}
&
c_{a}c_{a'}c_{w'}
}
\end{equation}
Suppose~$l>p$, then we consider two cases depending on whether or not the first element of the column~$b$ is $y_p$. If this element is~$y_p$, then when computing the tableau $P(vt)$ no element of the column~$t$ is inserted in~$y_q\ldots y_{p+1}$. Hence we have~$w = y_q\ldots y_{p+1}b$ and~$b'=w'$. On the other hand, by Schensted's insertion procedure we have~$P(uw) = P(eb)$. Hence, there is a $2$-cell $\alpha_{e,b}:c_{e}c_{b}\dfl c_{a}c_{a'}$ which yields the confluence diagram~(\ref{3-cellD2prime}). Suppose that the first element of the column~$b$ is~$z_l$. Then when computing the tableau~$P(vt)$ some elements of the column~$t$ are inserted in~$y_q\ldots y_{p+1}$. In this case, we have that the column~$w'$ contains more elements than~$b'$ and that~$c_{s}c_{s'}c_{b'}$ is a tableau. Moreover, by Schensted's insertion procedure, we have~$a=s$. Since $c_{s}c_{s'}c_{b'}$ is the unique tableau obtained from~$c_uc_vc_t$ and~$a=s$, we obtain that~$C(P(a'w')) = s'b'$. As a consequence, there is a $2$-cell~$\alpha_{a',w'}: c_{a'}c_{w'}\dfl c_{s'}c_{b'}$ which yields the confluence diagram~(\ref{celluleDuvt}).
\medskip

\noindent {\bf Case 4:}
$\left\{
\begin{array}{l}
q\geqslant l    \; \text{and}\; y_{i_0}> z_{i_0} \;\text{for some }\;\; i_0\leqslant l\\ 
p\geqslant q    \; \text{and}\;x_{j_0}>y_{j_0}\; \text{for some}\;\ j_0\leqslant q
\end{array}\right.$
or
\;
$\left\{\begin{array}{l}
q\geqslant l\;\; \text{and} \; y_{i_0} > z_{i_0}\;\text{for some}\;i_0\leqslant q\\
p < q\;\; \text{and} \; x_{j_0}> y_{j_0}\;\text{for some }\; j_0\leqslant p
\end{array}\right.$ 
\\
\\ or \; $\left\{
\begin{array}{l}
q < l    \; \text{and}\; y_{i_0}> z_{i_0} \;\text{for some }\; i_0\leqslant q,\\ 
p\geqslant q    \; \text{and}\;x_{j_0}>y_{j_0}\; \text{for some}\;\ j_0\leqslant q.
\end{array}\right.$
or
\;
$\left\{\begin{array}{l}
q < l\;\; \text{and} \;\; y_{i_0} > z_{i_0}\;\text{for some}\;i_0\leqslant q\\
p < q\;\; \text{and} \;\; x_{j_0}> y_{j_0}\;\text{for some }\;j_0\leqslant p
\end{array}\right.$ 

\medskip

By Lemma~\ref{Lemma3}, the last term of $e'$ is $y_{1}$ or $y_{j+1}$, where $y_{j}$ is the biggest element of $v$ such that~ $y_{j}<x_{1}$. Suppose that the last term of $e'$ is $y_{1}$. Since $z_{l}\geq y_{1}$, the tableau $P(e't)$ consists of two columns.
Furthermore, if the last term of $e'$ is $y_{j+1}$, then we consider two cases: $z_{l}\geqslant y_{j+1} \text{ or } z_{l} < y_{j+1}$. Suppose $z_{l}<y_{j+1}$, then the tableau $P(e't)$ consists of one column $e't$. We consider two cases depending on whether or not $c_{e}c_{e't}$ is a tableau. With the same arguments of Case~2, we obtain a confluence diagram of the following forms: 

\[
\xymatrix @!C @C=2.3em @R=0.6em {
&
c_{e} c_{e'}c_t
  \ar@2[r] ^{c_{e}\alpha_{e',t}}
     \ar@3 []!<10pt,-10pt>;[dd]!<10pt,10pt> ^{D^{(3)}_{u,v,t}} 
&
c_{e} c_{e't}
\\
c_uc_vc_t
  \ar@2[ur] ^{\alpha_{u,v}c_t}
  \ar@2[dr] _{c_u\alpha_{v,t}}
&&&
\\
&
c_uc_{w}c_{w'}
  \ar@2[r] _{\alpha_{u,w}c_{w'}}
&
c_{e}c_{a'}c_{w'}
 \ar@2[uu] _{c_{e}\alpha_{a',w'}}
}
\!\!\!\!\!\!\!\!\!\!\!\!\!\!\!\!\!
\xymatrix @!C @C=2.3em @R=0.6em {
&
c_{e} c_{e'}c_t
  \ar@2[r] ^{c_{e}\alpha_{e',t}}
     \ar@3 []!<40pt,-10pt>;[dd]!<40pt,10pt> ^{D^{(4)}_{u,v,t}} 
&
c_{e} c_{e't}
  \ar@2[dr] ^{\alpha_{e,e't}}
\\
c_uc_vc_t
  \ar@2[ur] ^{\alpha_{u,v}c_t}
  \ar@2[dr] _{c_u\alpha_{v,t}}
&&&
c_{a}c_{a'w'}
\\
&
c_uc_{w}c_{w'}
  \ar@2[r] _{\alpha_{u,w}c_{w'}}
&
c_{a}c_{a'}c_{w'}
 \ar@2[ur] _{c_{a}\alpha_{a',w'}}
}
\]	
Suppose the tableau $P(e't)$ consists of two columns. Using the same arguments of Case~2 and Case~3, we obtain a confluence diagram of the form $D_{u,v,t}$, $D^{(1)}_{u,v,t}$ or $D^{(2)}_{u,v,t}$.
\end{proof}

\begin{remark}
\label{Subsection:QuadraticNormalisation}

In the proof of Theorem~\ref{MainTheoremA}, we don't use the fact that the~$2$-polygraph~$\Colo_2(n)$ is convergent. Using the notion of quadratic normalisation of monoids introduced in~\cite{GuiraudDehornoy15}, our construction allows us to give a new proof of the termination of the $2$-polygraph~$\Colo_2(n)$ without considering the combinatorial properties of tableaux.
Indeed, consider the map $\Phi: \Colo_1(n)^{\ast}\to \Colo_1(n)^{\ast}$  sending a  $1$-cell in $\Colo_1(n)^{\ast}$ to its unique corresponding tableau. Then $(\Colo_1(n), \Phi)$ is a quadratic normalisation of the monoid~$\P_n$ in the sense of~\cite{GuiraudDehornoy15}. That is, the map $\Phi$ satisfies
\begin{enumerate}[{\bf i)}]
\item $\len(\Phi(w)) = \len(w)$,
\item $\len(w) = 1$ implies $\Phi(w) = w$,
\item $\Phi(u\Phi(w)v) = \Phi(uwv)$,  for all $1$-cells $u$, $v$ and $w$ in~$\Colo_1(n)^{\ast}$,
\end{enumerate}
and a quadraticity property, see~{\cite[Definition 3.1.2.]{GuiraudDehornoy15}} for details.
Using the fact that the~$2$-polygraph~$\Colo_2(n)$ has the unique normal form property as proved in Proposition~\ref{normalformproperty}, we show by Theorem~\ref{MainTheoremA} that the quadratic normalisation~$(\Colo_1(n), \Phi)$ is of class~$(3,3)$, that is, one obtains the normal form after at most~$3$ steps when starting from the left and~$3$ steps from the right. 
Hence, we obtain by~{\cite[Proposition~5.1.1]{GuiraudDehornoy15}}, that the corresponding $2$-polygraph~$\Colo_2(n)$ is finite and convergent. As a consequence, we obtain a new proof of the termination of the $2$-polygraph~$\Colo_2(n)$ .
\end{remark}

\section{Reduction of the coherent presentation}
\label{Section:ReductionCoherentPresentation}

In this section, using the homotopical reduction procedure, we reduce the coherent presentation~$\Colo_3(n)$ into a smaller coherent presentation of the monoid $\P_n$.
Let us  begin by recalling the homotopical completion-reduction procedure introduced in~{\cite[2.3.3]{GaussentGuiraudMalbos14}.

\subsection{Homotopical completion-reduction procedure}

\subsubsection{Homotopical reduction procedure} 

Let $\Sigma$ be a $(3,1)$-polygraph. A \emph{$3$-sphere} of the \linebreak $(3,1)$-category $\tck{\Sigma}_3$ is a pair $(f,g)$ of $3$-cells of $\tck{\Sigma}_3$ such that $s_2(f)=s_2(g)$ and $t_2(f)=t_2(g)$.
A \emph{collapsible part of $\Sigma$} is a triple $ \Gamma=(\Gamma_2,\Gamma_3,\Gamma_4)$ made of a
family~$\Gamma_2$ of $2$-cells of $\Sigma$, a family $\Gamma_3$ of
$3$-cells of $\Sigma$ and a family~$\Gamma_4$ of $3$-spheres of
$\tck{\Sigma}_3$, such that the following conditions are satisfied:
\begin{enumerate}[{\bf i)}]
\item every $\gamma$ of every $\Gamma_k$ is collapsible,
that is, $t_{k-1}(\gamma)$ is in $\Sigma_{k-1}$ and $s_{k-1}(\gamma)$
does not contain $t_{k-1}(\gamma)$,
\item no cell of $\Gamma_2$ (resp. $\Gamma_3$) is the target of a
collapsible $3$-cell of $\Gamma_3$ (resp. $3$-sphere of $\Gamma_4$),
\item there exists a well-founded order on the cells of $\Sigma$ such
that, for every~$\gamma$ in every $\Gamma_k$, $t_{k-1}(\gamma)$ is
strictly greater than every generating $(k-1)$-cell that occurs in the
source of~$\gamma$.
\end{enumerate}

The \emph{homotopical reduction} of the $(3,1)$-polygraph $\Sigma$ with respect to a collapsible part $\Gamma$ is the Tietze transformation, denoted by $R_{\Gamma}$, from the $(3,1)$-category
$\tck{\Sigma}_3$ to the $(3,1)$-category freely generated by the
$(3,1)$-polygraph obtained from $\Sigma$ by removing the cells of
$\Gamma$ and all the corresponding redundant cells. We refer the reader to {\cite[2.3.1]{GaussentGuiraudMalbos14}} for details on the definition of the Tietze transformation $R_{\Gamma}$ defined by well-founded induction as follows. For any $\gamma$ in $\Gamma$
\[
R_\Gamma(t(\gamma)) = R_\Gamma(s(\gamma))
\qquad\text{and}\qquad
R_\Gamma(\gamma)=1_{R_\Gamma(s(\gamma))}.
\]
In any other cases, the transformation $R_\Gamma$ acts as an identity.

\subsubsection{Generating triple confluences}

A \emph{local triple branching} of a $2$-polygraph $\Sigma$ is a triple $(f,g,h)$ of rewriting steps of $\Sigma$ with a common source.
An \emph{aspherical} triple branchings have two of their $2$-cells equal. 
A \emph{Peiffer} triple branchings have at least one of their $2$-cells that form a Peiffer branching with the other two.
The \emph{overlap} triple branchings are the remaining local triple branchings.
Local triple branchings are ordered by inclusion of their sources and a minimal overlap triple branching is called \emph{critical}. 
If $\Sigma$ is a coherent and convergent $(3,1)$-polygraph, a \emph{triple generating confluence of $\Sigma$} is a $3$-sphere
\[
\xymatrix@!C @C=1.8em @R=0.7em{
& v
	\ar@2 @/^/ [rr] ^-{f'_1}
	\ar@3 []!<10pt,-7pt>;[dr]!<-8pt,10pt>
&& x'
	\ar@2 @/^/ [dr] ^-{h''}
		\ar@3 []!<-1pt,-9pt>;[dd]!<-1pt,17pt>
&&&&& v
	\ar@2 @/^/ [rr] ^-{f'_1}
	\ar@2 [dr] |-{f'_2}
	\ar@3 []!<-8pt,-9pt>;[dd]!<-8pt,17pt>
&& x'
	\ar@2 @/^/ [dr] ^-{h''}
	\ar@3 []!<8pt,-2pt>;[dl]!<15pt,8pt>
\\
u
	\ar@2 @/^/ [ur] ^-{f}
	\ar@2 [rr] |-{g}
	\ar@2 @/_/ [dr] _-{h}
&& w
	\ar@2 [ur] |-{g'_1}
	\ar@2 [dr] |-{g'_2}
	\ar@3 []!<8pt,-2pt>;[dl]!<15pt,8pt>
&& {\rep{u}}
& \strut
	\ar@4 [r] ^-*+{\omega_{f,g,h}}
&& u
	\ar@2 @/^/ [ur] ^-{f}
	\ar@2 @/_/ [dr] _-{h}
&& w'
	\ar@2 [rr] |-{g''}
	\ar@3 []!<10pt,-7pt>;[dr]!<-8pt,10pt>
&& {\rep{u}}
\\
& x
	\ar@2 @/_/ [rr] _-{h'_2}
&& v'
	\ar@2 @/_/ [ur] _-{f''}
&&&&& x
	\ar@2 [ur] |-{h'_1}
	\ar@2 @/_/ [rr] _-{h'_2}
&& v'
	\ar@2 @/_/ [ur] _-{f''}
}
\]
where $(f,g,h)$ is a triple critical branching of the $2$-polygraph $\Sigma_2$ and the other cells are obtained by confluence, see~\cite[2.3.2]{GaussentGuiraudMalbos14} for details.

\subsubsection{Homotopical reduction of the polygraph \pdf{\Colo_3(n)}}

In the rest of this section, we apply three steps of homotopical reduction on the $(3,1)$-polygraph $\Colo_3(n)$.
As a first step, we apply in~\ref{Subsection:ReducedColumnPresentation} a homotopical reduction  on the $(3,1)$-polygraph $\Colo_3(n)$ with a collapsible part defined by some of the generating triple confluences of the $2$-polygraph $\Colo_2(n)$. In this way, we reduce the coherent presentation~$\Colo_3(n)$ of the monoid~$\P_n$ into the coherent presentation~$\overline{\Colo}_3(n)$ of~$\P_n$, whose underlying $2$-polygraph is $\Colo_2(n)$ and the $3$-cells $\mathcal{X}_{u,v,t}$ are those of $\Colo_3(n)$, but with $\len(u)=1$.
We reduce in~\ref{Subsection:PreColumnCoherentPresentation} the coherent presentation~$\overline{\Colo}_3(n)$ into a coherent presentation $\PreColo_3(n)$ of $\P_n$, whose underlying $2$-polygraph is $\PreColo_2(n)$. This reduction is given by a collapsible part defined by a set of $3$-cells of $\overline{\Colo}_3(n)$. 
In a final step, we reduce in \ref{Section:KnuthCoherentPresentation} the coherent presentation $\PreColo_3(n)$ into a coherent presentation $\Knuth_3(n)$ of $\P_n$ whose underlying $2$-polygraph is $\Knuth_2(n)$.
By {\cite[Theorem 2.3.4]{GaussentGuiraudMalbos14}}, all these homotopical reductions preserve coherence. That is, the $(3,1)$-polygraph~$\Colo_3(n)$ being a coherent presentation of the monoid~$\P_n$, the $(3,1)$-polygraphs~$\overline{\Colo}_3(n)$ and~$\Knuth_3(n)$ are coherent presentations of $\P_n$.

\subsection{A reduced column presentation}
\label{Subsection:ReducedColumnPresentation}

We apply the homotopical reduction procedure in order to reduce the $(3,1)$-polygraph~$\Colo_3(n)$ using the generating triple confluences.

\subsubsection{Generating triple confluences of $\Colo_2(n)$}
\label{Generating triple confluences}

Consider the homotopical reduction procedure on the $(3,1)$-polygraph $\Colo_3(n)$ defined using the collapsible part made of generating triple confluences.
By Theorem~\ref{MainTheoremA}, the family of $3$-cells $\mathcal{X}_{u,v,t}$ given in~(\ref{celluleX}) and indexed by columns $u$, $v$ and $t$ in~$\col(n)$ such that  \typetroisbasep{$u$}{$v$}{$t$}{00} forms a homotopy basis of the $(2,1)$-category $\tck{\Colo_2(n)}$.
Let us consider such a triple $(u,v,t)$ with $\len(u)\geq 2$. Let $x_p$  be in~$[n]$ such that $u=x_pu_1$ with~$u_1$ in $\col(n)$. There is a critical triple branching with source $c_{x_p}c_{u_1}c_vc_t$. Let us show that the confluence diagram induced by this triple branching is represented by the $3$-sphere $\Omega_{x_p,u_1,v,t}$ whose source is the following $3$-cell
\[
\scalebox{0.9}{
\xymatrix @C=0.65em@R=1.5em{
&
&
c_uc_{v}c_t
\ar@2[rr] ^-{\alpha_{u,v}} _-{}
\ar@3 []!<-8pt,-12pt>;[dd]!<-8pt,17pt> ^{\mathcal{X}_{x_p,u_1,v}c_{t}} 
&
&
c_ec_{e'}c_{t}
\ar@2[drrr] ^-{\alpha_{e',t}} _-{}
 \ar@3 []!<-9pt,-12pt>;[dd]!<-9pt,20pt> ^{c_e\mathcal{X}_{y,s',t}} 
&
&
\\
&
&
&
c_{e}c_{y}c_{s'}c_{t}
\ar@2[ur] ^-{\alpha_{y,s'}} _-{}
\ar@2[dr] ^-{\alpha_{s',t}} _-{}
&&&&
c_{e}c_{b}c_{b'}
\ar@2[dr] ^-{\alpha_{e,b}} _-{}
\\
c_{x_p}c_{u_1}c_vc_t
\ar@2[rr] ^-{\alpha_{u_1,v}} _-{}
\ar@2[uurr] ^-{\alpha_{x_p,u_1}} _-{}
\ar@2[dr] ^-{} _-{\alpha_{v,t}}
&
&
c_{x_{p}}c_{s}c_{s'}c_{t}
\ar@2[ur] ^-{\alpha_{x_{p},s}} _-{}
\ar@2[dr] ^-{\alpha_{s',t}} _-{}
\ar@3 []!<-5pt,-12pt>;[dd]!<-5pt,15pt> ^{c_{x_p}\mathcal{X}_{u_1,v,t}} 
&
\equiv
&
c_{e}c_{y}c_{d_{1}}c_{d_{1}'}
\ar@2[rr] ^-{\alpha_{y,d_{1}}} _-{}
\ar@3 []!<15pt,-15pt>;[dd]!<15pt,15pt> ^{\mathcal{X}_{x_p,s,d_1}c_{d'_1}} 
&
&
c_{e}c_{b}c_{s_{2}}c_{d_{1}'}
\ar@2[ur] ^-{\alpha_{s_{2},d_{1}'}} _-{}
\ar@2[dr] ^-{\alpha_{e,b}} _-{}
&
\equiv
&
c_{a}c_{d}c_{b'}
\\
&
c_{x_{p}}c_{u_{1}}c_{w}c_{w'}
\ar@2[dr] _-{\alpha_{u_{1},w}} _-{}
&
&
c_{x_{p}}c_{s}c_{d_{1}}c_{d_{1}'}
\ar@2[dr] ^-{\alpha_{s,d_{1}}} _-{}
\ar@2[ur] ^-{\alpha_{x_{p},s}} _-{}
&
&
&
&
c_{a}c_{d}c_{s_{2}}c_{d_{1}'}
\ar@2[ur] _-{\alpha_{\alpha_{s_{2},d_{1}'}}} _-{}
\\
&
&
c_{x_{p}}c_{a_{1}}c_{a_{1}'}c_{w'}
\ar@2[rr] ^-{} ^-{\alpha_{a_{1}',w'}}
\ar@2[ddrr] ^-{} _-{\alpha_{x_{p},a_{1}}}
&
&
c_{x_{p}}c_{a_{1}}c_{s_{3}}c_{d_{1}'}
\ar@2[rr] ^-{\alpha_{x_{p},a_{1}}} _-{}
&
&
c_{a}c_{z}c_{s_{3}}c_{d_{1}'}
\ar@2[ur] _-{\alpha_{z,s_{3}}} _-{}
\\
&&&& \equiv
\\
&&&&
c_{a}c_{z}c_{a_{1}'}c_{w'}
\ar@2[uurr] _-{\alpha_{a_{1}',w'}} _-{}
}
}
\]
and whose target is the following $3$-cell
\[
\scalebox{0.9}{
\xymatrix @C=0.65em@R=1.5em{
&
&
c_ec_{e'}c_t
\ar@2[rr] ^-{\alpha_{e',t}} _-{}
 \ar@3 []!<20pt,-15pt>;[dd]!<20pt,15pt> ^{\mathcal{X}_{u,v,t}} 
&
&
c_ec_bc_{b'}
\ar@2[dr] ^-{\alpha_{e,b}} _-{}
&
&
\\
&
c_uc_vc_t
\ar@2[ur] ^-{\alpha_{u,v}} _-{}
\ar@2[dr] ^-{} _-{\alpha_{v,t}}
&
&
&
&
c_ac_dc_{b'}
&
&
\\
c_{x_p}c_{u_1}c_vc_t
\ar@2[ur] ^-{\alpha_{x_p,u_1}} _-{}
\ar@2[dr] ^-{} _-{\alpha_{v,t}}
&
\equiv
&
c_uc_wc_{w'}
\ar@2[rr] ^-{\alpha_{u,w}} _-{}
\ar@3 []!<10pt,-15pt>;[dd]!<10pt,15pt> ^{\mathcal{X}_{x_p,u_1,w}c_{w'}} 
&
&
c_ac_{a'}c_{w'}
\ar@2[ur] ^-{\alpha_{a',w'}} _-{}
\ar@3 []!<15pt,-15pt>;[dr]!<15pt,15pt> ^{c_{a}\mathcal{X}_{z,a'_1,w'}}
&
&
c_{a}c_{d}c_{s_{2}}c_{d_{1}'}
\ar@2[ul] _-{\alpha_{s_{2},d_{1}'}} _-{}
&
&
\\
&
c_{x_{p}}c_{u_{1}}c_{w}c_{w'}
\ar@2[ur] ^-{\alpha_{x_{p},u_{1}}} _-{}
\ar@2[dr] ^-{} _-{\alpha_{u_{1},w}}
&
&
&
&
c_{a}c_{z}c_{s_{3}}c_{d_{1}'}
\ar@2[ur] _-{\alpha_{z,s_{3}}} _-{}
&
&
\\
&
&
c_{x_{p}}c_{a_{1}}c_{a_{1}'}c_{w'}
\ar@2[rr] ^-{\alpha_{x_{p},a_{1}}} _-{}
&
&
c_{a}c_{z}c_{a_{1}'}c_{w'}
\ar@2[ur] _-{\alpha_{a_{1}',w'}} _-{}
\ar@2[uu] _-{\alpha_{z,a_{1}'}} _-{}
&
&
\\
}
}
\]
In the generating triple confluence, some columns may be empty and thus the indicated $2$-cells $\alpha$ may be identities. To facilitate the reading of the diagram, we have omitted the context of the $2$-cells $\alpha$.

The $3$-sphere $\Omega_{x_p,u_1,v,t}$ is constructed as follows.
We have \typedeuxbase{$x_p$}{$u_1$}{01} and \typedeuxbase{$u_1$}{$w$}{0}, thus $\mathcal{X}_{x_p,u_1,w}$ is either of the form $A_{x_p,u_1,w}$ or $C_{x_p,u_1,w}$. Let us denote by $a_{1}$ and $a_{1}'$ the two columns of the tableau $P(u_{1}w)$.  The $3$-cell $\mathcal{X}_{x_{p},u_1,w}$ being confluent, we have~$C(P(x_{p}a_{1})) = az$ with $z$ in $[n]$ and $C(P(za_{1}^{'})) = a'$. 
In addition, from \typedeuxbase{$z$}{$a'_1$}{01} and \typedeuxbase{$a'_1$}{$w'$}{0}, we deduce that $\mathcal{X}_{z,a'_{1},w'}$ is either of the form $A_{z,a'_{1},w'}$ or $C_{z,a'_{1},w'}$. 
From \typedeuxbase{$x_p$}{$u_1$}{01} and \typedeuxbase{$u_1$}{$v$}{0}, we deduce that
$\mathcal{X}_{x_p,u_1,v}$ is either of the form~$A_{x_p,u_{1},v}$ or~$C_{x_p,u_{1},v}$. Let us denote by $s$ and $s'$ the two columns of the tableau $P(u_1v)$. The $3$-cell $\mathcal{X}_{x_p,u_1,v}$ being confluent, we obtain that~$C(P(x_{p}s)) = ey$ with $y$ in $[n]$ and $C(P(ys')) = e'$. From \typedeuxbase{$y$}{$s'$}{01} and \typedeuxbase{$s'$}{$t$}{0}, we deduce that $\mathcal{X}_{y,s',t}$ is either of the form~$A_{y,s',t}$ or~$C_{y,s',t}$. Denote by $d_1$ and $d_1'$ the two columns of the tableau $P(s't)$. The $3$-cell~$\mathcal{X}_{y,s',t}$ being confluent and  $C(P(e't))= bb'$,  we have  $C(P(yd_{1})) = bs_{2}$ and $C(P(s_{2}d_{1}')) = b'$.
On the other hand, the $3$-cell $\mathcal{X}_{u_{1},v,t}$ is confluent, then we have  $C(P(sd_{1})) = a_{1}s_{3}$ and $C(P(a_{1}'w')) = s_{3}d_{1}'$. 
Finally, since the $3$-cell $\mathcal{X}_{x_{p},s,d_{1}}$ is confluent, we obtain $C(P(zs_{3})) = ds_{2}$.

\subsubsection{Reduced coherent column presentation}

Let us define by $\overline{\Colo}_3(n)$ the extended presentation of the monoid $\P_n$ obtained from $\Colo_2(n)$ by adjunction of one  family of $3$-cells~$\mathcal{X}_{x,v,t}$ of the form~(\ref{celluleX}), for every $1$-cell $x$ in $[n]$ and columns $v$ and $t$ in $\col(n)$ such that \typetroisbasep{$x$}{$v$}{$t$}{00}. The following result shows that this reduced presentation is also coherent.

\begin{proposition}
\label{Proposition:Colo3Reduced}
For $n > 0$, the $(3,1)$-polygraph $\overline{\Colo}_3(n)$ is a coherent presentation of the monoid~$\P_n$. 
\end{proposition}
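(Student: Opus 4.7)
The plan is to obtain $\overline{\Colo}_3(n)$ from $\Colo_3(n)$ as the result of a homotopical reduction along a collapsible part built out of the generating triple confluences $\Omega_{x_p,u_1,v,t}$ described in~\ref{Generating triple confluences}, and then invoke \cite[Theorem 2.3.4]{GaussentGuiraudMalbos14} to conclude that coherence is preserved.

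Concretely, I would define a collapsible part
\[
\Gamma \;=\; (\emptyset,\,\emptyset,\,\Gamma_4)
\]
of the $(3,1)$-polygraph $\Colo_3(n)$, where $\Gamma_4$ is the family of all $3$-spheres $\Omega_{x_p,u_1,v,t}$ indexed by triples $(u,v,t)$ of columns with $\typetroisbasep{$u$}{$v$}{$t$}{00}$ and with a chosen decomposition $u = x_p u_1$, $x_p \in [n]$, $u_1 \in \col(n)$ (with $\len(u)\geq 2$). By inspection of the two $3$-cells forming $\Omega_{x_p,u_1,v,t}$, every generating $3$-cell $\mathcal{X}_{a,b,c}$ appearing in its source or target either has first argument of length~$1$, or has first argument strictly shorter than~$u$ (namely $u_1$). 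The cell $\mathcal{X}_{u,v,t}$ itself appears only in the target, exactly once, so it is the redundant $3$-cell collapsed by $\Omega_{x_p,u_1,v,t}$.

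To verify the three axioms of a collapsible part: (i) each $\Omega_{x_p,u_1,v,t}$ is a collapsible $3$-sphere because its target contains exactly the cell $\mathcal{X}_{u,v,t}$ (which is absent from the source, since the source is built from $\mathcal{X}$-cells whose first column is either $x_p$ or $u_1$); (ii) distinct $\Omega$'s collapse distinct $3$-cells $\mathcal{X}_{u,v,t}$ (the decomposition $u = x_p u_1$ being uniquely determined by, say, taking $x_p$ to be the top entry of $u$), so no $3$-cell is the target of two $3$-spheres in $\Gamma_4$; (iii) the well-founded order
\[
\mathcal{X}_{u,v,t} \;\prec\; \mathcal{X}_{u',v',t'}
\quad\Longleftrightarrow\quad
\len(u) < \len(u')
\]
(extended lexicographically by any fixed well-order on $(v,t)$) shows that the target $\mathcal{X}_{u,v,t}$ of $\Omega_{x_p,u_1,v,t}$ is strictly greater than every $\mathcal{X}_{a,b,c}$ occurring in its source, since those satisfy $\len(a)\in\{1,\len(u_1)\} < \len(u)$.

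With $\Gamma$ in hand, the homotopical reduction $R_\Gamma$ of \cite[2.3.3]{GaussentGuiraudMalbos14} yields a Tietze transformation from $\tck{\Colo_3(n)}_3$ to the $(3,1)$-category freely generated by the sub-polygraph obtained by removing, for every $(u,v,t)$ with $\len(u)\geq 2$, the $3$-sphere $\Omega_{x_p,u_1,v,t}$ together with its target cell $\mathcal{X}_{u,v,t}$. By construction this sub-polygraph is exactly $\overline{\Colo}_3(n)$. Since $\Colo_3(n)$ is a coherent presentation of $\P_n$ by Theorem~\ref{MainTheoremA}, \cite[Theorem 2.3.4]{GaussentGuiraudMalbos14} ensures that $\overline{\Colo}_3(n)$ is also a coherent presentation of $\P_n$.

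The main obstacle I anticipate is the bookkeeping for condition~(ii): one must make a canonical choice of the splitting $u = x_p u_1$ for each $u$ with $\len(u)\geq 2$ so that each $\mathcal{X}_{u,v,t}$ is eliminated by exactly one $\Omega$. Fixing $x_p$ as the greatest letter of $u$ gives a well-defined choice, and the residual $u_1$ is a column because $u$ is. Once this is settled, the verification of the collapsibility axioms and the identification of the resulting sub-polygraph with $\overline{\Colo}_3(n)$ are straightforward.
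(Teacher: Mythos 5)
Your proposal follows essentially the same route as the paper: both take the collapsible part to be the family of $3$-spheres $\Omega_{x_p,u_1,v,t}$, order the $3$-cells $\mathcal{X}_{u,v,t}$ by the length (or deglex order) of the first column so that the target cell $\mathcal{X}_{u,v,t}$ dominates the cells $\mathcal{X}_{x_p,\cdot,\cdot}$ and $\mathcal{X}_{u_1,\cdot,\cdot}$ occurring elsewhere in the sphere, and conclude by the coherence-preservation of homotopical reduction. The only point you pass over is that the target of $\Omega_{x_p,u_1,v,t}$ is a composite containing $\mathcal{X}_{u,v,t}$ rather than the bare generator, so strictly one must first apply a Nielsen transformation to make the sphere collapsible in the sense of the definition --- a minor adjustment the paper records with the phrase ``up to a Nielsen transformation.''
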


\begin{proof}
Let $\Gamma_4$ be the collapsible part made of the family of $3$-sphere $\Omega_{x_p,u_1,v,t}$, indexed by $x_p$ in $[n]$ and~$u_1,v,t$ in $\col(n)$ such that \typetroisbasep{$u$}{$v$}{$t$}{00} and $u=x_pu_1$.
On the $3$-cells of $\Colo_3(n)$, we define a well-founded order~$\vartriangleleft$ by
\begin{enumerate}[{\bf i)}]
\item $A_{u,v,t} \vartriangleleft C_{u,v,t} \vartriangleleft B_{u,v,t} \vartriangleleft D_{u,v,t}$, 
\item if $\mathcal{X}_{u,v,t}\in\{A_{u,v,t}, B_{u,v,t}, C_{u,v,t}, D_{u,v,t}\}$ and $u'\ordrecoldeglex u$, then $\mathcal{X}_{u',v',t'} \vartriangleleft \mathcal{X}_{u,v,t}$,
\end{enumerate}
for any $u,v,t$ in $\col(n)$ such that  \typetroisbasep{$u$}{$v$}{$t$}{00}.
By construction of the $3$-sphere $\Omega_{x_p,u_1,v,t}$, its source contains the $3$-cell $\mathcal{X}_{u_{1},v,t}$ and its target contains the $3$-cell $\mathcal{X}_{u,v,t}$ with $\len(u_1)<\len(u)$. Up to a Nielsen transformation, the homotopical reduction  $R_{\Gamma_4}$ applied on the $(3,1)$-polygraph $\Colo_{3}(n)$ with respect to $\Gamma_{4}$ and the order~$\vartriangleleft$ give us the $(3,1)$-polygraph~$\overline{\Colo}_3(n)$. In this way, the presentation~$\overline{\Colo}_3(n)$ is a coherent presentation of the monoid~$\P_n$.
\end{proof} 
 
\subsection{Pre-column coherent presentation}
\label{Subsection:PreColumnCoherentPresentation}

We reduce the coherent presentation $\overline{\Colo}_3(n)$ into a coherent presentation whose underlying $2$-polygraph is~$\PreColo_2(n)$.
This reduction is obtained using the homotopical reduction~$R_{\Gamma_3}$ on the $(3,1)$-polygraph~$\overline{\Colo}_3(n)$ whose collapsible part $\Gamma_3$ is defined by
\begin{align*}
\Gamma_3 = 
\{\;
&A_{x,v,t}\;|\;x\in [n],\;v,t\in \col(n)\;\text{such that}\; \typetroisbasep{$x$}{$v$}{$t$}{11}
\;\}
\\
&\qquad\cup
\{\;
B_{x,v,t}\;|\;x\in [n],\;v,t\in \col(n)\;\text{such that}\; \typetroisbasep{$x$}{$v$}{$t$}{21}
\;\}
\\
&\qquad\qquad\cup
\{\;
C_{x,v,t}\;|\;x\in [n],\;v,t\in \col(n)\;\text{such that}\; \typetroisbasep{$x$}{$v$}{$t$}{12}
\;\},
\end{align*}
and the well-founded order defined as follows.

\subsubsection{A well-founded order on $2$-cells}
\label{WellFoundedOrderOn2Cells}

Consider two columns $u$ and $v$ in $\col(n)$ such that \typedeuxbase{$u$}{$v$}{0}. Let denote by $C_{r}(P(uv))$ the reading of the right column of the tableau $P(uv)$. We define a well-founded order~$\vartriangleleft$ on the $2$-cells of~$\Colo_2(n)$ as follows
\[
\alpha_{u',v'}\vartriangleleft \alpha_{u,v} 
\quad\text{if}\quad
\begin{cases}
\len(uv) >  \len(u'v') & \text{or} \\
\len(uv)= \len(u'v')  & \text{and \quad} 
\begin{cases}
\len(u)>\len(C_{r}(P(u'v')))\;\;\text{or} \\
\len(u)\leq \len(C_{r}(P(u'v'))) \text{ and } u'\preccurlyeq _{\text{rev}} u
\end{cases}
\end{cases}
\]
for any columns $u$, $v$, $u'$ and $v'$ in $\col(n)$ such that \typedeuxbase{$u$}{$v$}{0} and \typedeuxbase{$u'$}{$v'$}{0}.

\subsubsection{The homotopical reduction $R_{\Gamma_3}$}
\label{SubSubSectionHomotopicalReduction}

Consider the well-founded order $\vartriangleleft$ on the $2$-cells of $\Colo_2(n)$ defined in \ref{WellFoundedOrderOn2Cells} and the well-founded order $\vartriangleleft$ on $3$-cells defined in the proof of Proposition~\ref{Proposition:Colo3Reduced}.
The reduction $R_{\Gamma_3}$ induced by these orders can be decomposed as follows. 
For any $x$ in $[n]$ and columns $v$, $t$ such that \typetroisbasep{$x$}{$v$}{$t$}{11}, 
we have $\alpha_{x,v}\vartriangleleft \alpha_{xv,t}$, $\alpha_{v,t} \vartriangleleft \alpha_{xv,t}$ and~\mbox{$\alpha_{x,vt} \vartriangleleft \alpha_{xv,t}$}. The reduction $R_{\Gamma_3}$ removes the $2$-cell $\alpha_{xv,t}$ together with the following $3$-cell:
\[
\xymatrix @!C @C=2.3em @R=0.6em {
&
c_{xv}c_t
   \ar@2[dr] ^{\alpha_{xv,t}}
  \ar@3 []!<0pt,-10pt>;[dd]!<0pt,+10pt> ^{A_{x,v,t}} 
&
\\
c_xc_vc_t
  \ar@2[ur] ^{\alpha_{x,v}c_t}
  \ar@2[dr] _{c_x\alpha_{v,t}}
&&
c_{xvt}
\\
&
c_xc_{vt}
  \ar@2[ur] _{\alpha_{x,vt}}
&
}	
\] 
By iterating this reduction on the length of the column~$v$, we reduce all the $2$-cells $\alpha_{u,v}$ of $\Colo_2(n)$ to the following set of $2$-cells 
\begin{equation}
\label{Equations:twoCellsA}
\{
\;\alpha_{u,v} 
\;|\;  
\len(u)\geq 1,\; \len(v)\geq 2 \;\text{and}\; \typedeux{02} 
\; 
\}
\,\cup\,
\{
\;\alpha_{u,v}
\;|\;  
\len(u)= 1,\; \len(v)\geq 1 \;\text{and}\; \typedeux{01}
\;\}.
\end{equation}

For any $x$ in $[n]$ and columns $v$, $t$ such that \typetroisbasep{$x$}{$v$}{$t$}{12}, consider the following $3$-cell:
\[
\xymatrix @!C @C=1.4em @R=0.6em {
&&
c_{xv}c_t
     \ar@3 []!<0pt,-10pt>;[dd]!<0pt,10pt> ^{C_{x,v,t}} 
  \ar@2[drr] ^{\alpha_{xv,t}}
&&
\\
c_xc_vc_t
  \ar@2[urr] ^{\alpha_{x,v}c_t}
  \ar@2[dr] _{c_x\alpha_{v,t}}
&&&&
c_{a}c_{a'w'}
\\
&
c_xc_{w}c_{w'}
  \ar@2[rr] _{\alpha_{x,w}c_{w'}}
&&
c_{a}c_{a'}c_{w'}
 \ar@2[ur] _{c_{a}\alpha_{a',w'}}
&
}	
\]
where $w$, $w'$, $a$ and $a'$ are defined in Lemma~\ref{Lemma2}.
The $2$-cells $\alpha_{x,v}$, $\alpha_{v,t}$, $\alpha_{x,w}$ and $\alpha_{a',w'}$ are smaller than $\alpha_{xv,t}$ for the order $\vartriangleleft$.
The reduction $R_{\Gamma_3}$ removes the $2$-cell $\alpha_{xv,t}$ together with the $3$-cell $C_{x,v,t}$.
By iterating this reduction on the length of $v$, we reduce the set of $2$-cells given in (\ref{Equations:twoCellsA}) to the following set: 
\begin{equation}
\label{Equations:twoCellsB}
\{
\;\alpha_{u,v} 
\;|\;  
\len(u)= 1,\; \len(v)\geq 2 \;\text{and}\; \typedeux{02} 
\; 
\}
\,\cup\,
\{
\;\alpha_{u,v}
\;|\;  
\len(u)= 1,\; \len(v)\geq 1 \;\text{and}\; \typedeux{01}
\;\}.
\end{equation}

For any $x$ in $[n]$ and columns $v$, $t$ such that \typetroisbasep{$x$}{$v$}{$t$}{21}, consider the following $3$-cell:
\[
\xymatrix @!C @C=1.4em @R=0.6em {
&
c_{e} c_{e'}c_t
  \ar@2[rr] ^{c_{e}\alpha_{e',t}}
&
\ar@3 []!<0pt,-10pt>;[dd]!<0pt,10pt> ^{B_{x,v,t}} 
&
c_{e} c_{e't}
  \ar@2[dr] ^{\widetilde{\alpha}_{e,e't}}
&
\\
c_xc_vc_t
  \ar@2[ur] ^{\alpha_{x,v}c_t}
  \ar@2[drr] _{c_x\alpha_{v,t}}
&&&&
c_{s}c_{s'}
\\
&&
c_xc_{vt}
  \ar@2[urr] _{\alpha_{x,vt}}
&&
}	
\]
where~$e$,~$e'$,~$s$ and~$s'$ are defined in Lemma~\ref{Lemma3}. Note that~$\widetilde{\alpha}_{e,e't}$ is the $2$-cell in (\ref{Equations:twoCellsB}) obtained from the $2$-cell $\alpha_{e,e't}$ by the previous step of the homotopical reduction by the $3$-cell $C_{x,v,t}$.
Having $x$ in $[n]$, by definition of $\alpha$ we have $e'$ in $[n]$.
The $2$-cells $\alpha_{x,v}$, $\alpha_{e',t}$, $\alpha_{v,t}$ and~$\widetilde{\alpha}_{e,e't}$ being smaller than $\alpha_{x,vt}$ for the order $\vartriangleleft$, we can remove the $2$-cells $\alpha_{x,vt}$ together with the $3$-cell $B_{x,v,t}$. By iterating this reduction on the length of the column~$t$, we reduce the set (\ref{Equations:twoCellsB}) to the following set
\begin{equation}
\label{Equations:twoCellsC}
\{
\;\alpha_{u,v} 
\;|\;  
\len(u)= 1,\; \len(v)= 2 \;\text{and}\; \typedeux{02} 
\; 
\}
\,\cup\,
\{
\;\alpha_{u,v}
\;|\;  
\len(u)= 1,\; \len(v)\geq 1 \;\text{and}\; \typedeux{01}
\;\}.
\end{equation}

Let us recall from Section~\ref{Section:Pre-columnPresentation} that~$\PC_2(n)$ is the cellular extension of $\Colo_1^\ast(n)$ whose set of $2$-cells is 
\[
\big\{\;
c_{x}c_{zy}\odfll{\alpha'_{x,zy}}c_{zx}c_{y}
\;|\; 1 \leq x\leq y<z \leq n
\;\big\}\,\cup\,\big\{\; c_{y}c_{zx}\odfll{\alpha'_{y,zx}}c_{yx}c_{z}
\;|\; 1 \leq x< y\leq z \leq n
\;\big\}.
\]

\begin{lemma}
\label{Lemma:Precol2}
We have
\[
\PC_2(n) = 
\{\;\alpha_{u,v} : c_uc_v \dfl c_wc_{w'} \;|\;  \len(u)=1,\; \len(v)= 2 \;\text{and}\; \typedeux{02} \;\}.
\]
\end{lemma}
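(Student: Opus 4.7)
The plan is to prove the set equality by a direct case analysis on Schensted's insertion of length-three words. Fix $u = x \in [n]$ and $v = zy$ with $1 \leq y < z \leq n$, so that the source of any candidate $2$-cell $\alpha_{u,v}$ in the right-hand side is $c_x c_{zy}$. First I would determine when $\typedeux{02}$ holds and identify the left and right columns of $P(xzy)$.

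The key preliminary observation is that $\lnds(xzy) = 2$ if and only if $x \leq z$. The three ordered pairs extracted from $xzy$ are $(x,z)$, $(x,y)$ and $(z,y)$; since $z > y$, the pair $(z,y)$ is strictly decreasing, and $(x,y)$ being non-decreasing forces $x \leq y < z$, which is already subsumed in $x \leq z$. Because the number of columns of $P(xzy)$ equals $\lnds(xzy)$, the condition $\typedeux{02}$ is equivalent to $x \leq z$. The complementary case $x > z$ gives $\lnds(xzy) = 1$, so $P(xzy)$ is a single column and we are in $\typedeux{01}$, which is excluded from the right-hand side.

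Next I would split $x \leq z$ into the two subcases $x \leq y$ and $y < x$ and apply Schensted's algorithm to each. For $x \leq y < z$, successive insertion of $x$, $z$, $y$ produces the planar tableau $\young(xy,z)$, whose column reading is $zxy$; hence the left and right columns of $P(xzy)$ are $zx$ and $y$, and $\alpha_{x,zy} : c_x c_{zy} \dfl c_{zx} c_y$, which is exactly the relation $\alpha'_{x,zy}$ of the first family of $\PC_2(n)$. For $y < x \leq z$, Schensted produces $\young(yz,x)$, whose column reading is $xyz$; the left and right columns are $xy$ and $z$, giving $\alpha_{x,zy} : c_x c_{zy} \dfl c_{xy} c_z$. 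Setting $(x',y',z') = (y, x, z)$, which satisfies $1 \leq x' < y' \leq z' \leq n$, this $2$-cell is precisely $\alpha'_{y',z'x'} : c_{y'} c_{z'x'} \dfl c_{y'x'} c_{z'}$ of the second family.

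For the reverse inclusion, each relation $\alpha'_{x,zy}$ of the first family comes from the pair $(u,v) = (x, zy)$ with $x \leq y < z$, which forces $x \leq z$ and hence $\typedeux{02}$; and each relation $\alpha'_{y,zx}$ of the second family comes from $(u,v) = (y, zx)$ with $x < y \leq z$, in which case $y \leq z$ again yields $\typedeux{02}$. Thus the two sets coincide as sets of labelled $2$-cells. The only care needed is tracking the three possible orderings between $x$, $y$, $z$ and the planar convention for column reading; no conceptual obstacle is expected.
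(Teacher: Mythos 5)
Your proof is correct and follows essentially the same route as the paper's: the paper likewise fixes $u$ of length $1$ and $v=xx'$ with $x>x'$, notes that $\typedeux{02}$ forces $u\leq x$, and splits into the two subcases $u\leq x'$ and $x'<u$, identifying the resulting Schensted tableaux with the two families of $2$-cells $\alpha'$ of $\PC_2(n)$. Your additional justification that $\typedeux{02}$ is equivalent to $x\leq z$ via $\lnds$, and the explicit reverse inclusion, only make explicit what the paper leaves implicit.
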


\begin{proof}
Consider the $2$-cells  $\alpha_{u,v}$ in $\Colo_2(n)$ such that $\len(u)=1$, $\len(v)= 2$ and \typedeux{02}. Suppose that~$v=xx'$ with $x>x'$ in $[n]$. Since \typedeux{02}, we obtain  that $u\leq x$. Hence,  we have two cases to consider.
If $u\leq x'$, then $C(P(uv)) = (xu)x'$. Hence, the $2$-cell $\alpha_{u,v}$ is equal to the $2$-cell~$\alpha'_{u,xx'} : c_{u}c_{xx'}\dfl c_{xu}c_{x'}$.
In the other case, if $x'<u$, then $C(P(uv)) = (ux')x$. Hence the $2$-cell~$\alpha_{u,v}$ is equal to~\mbox{$\alpha'_{u,xx'}:c_{u}c_{xx'}\dfl~c_{ux'}c_{x}$.}
\end{proof}

Recall from~\ref{Subsubsection:Pre-columnPresentation} that the set of $2$-cells $\PreColo_2(n)$ is given by 
\[
\PreColo_2(n) = \PC_2(n)
\,\cup\,
\big\{
c_{x}c_{u} \odfl{\alpha'_{x,u}} c_{xu}
\;|\; xu\in \col(n) \;\;\text{and}\;\; 1\leq x \leq n
\big\}.
\]
Thus, by Lemma~\ref{Lemma:Precol2}, the set of $2$-cells defined in (\ref{Equations:twoCellsC}) is equal to $\PreColo_2(n)$.

\subsubsection{Pre-column coherent presentation}

The homotopical reduction $R_{\Gamma_3}$, defined in \ref{SubSubSectionHomotopicalReduction},
 reduces the coherent presentation $\overline{\Colo}_3(n)$ into a coherent presentation of the monoid $\P_n$.
The set of $2$-cells of this coherent presentation is given by~(\ref{Equations:twoCellsC}), which is~$\PreColo_2(n)$ by Lemma~\ref{Lemma:Precol2}.
Let us denote by~$\PreColo_3(n)$ the extended presentation of the monoid $\P_n$ obtained from $\PreColo_2(n)$ by adjunction of  the $3$-cell $R_{\Gamma_3}(C'_{x,v,t})$ where
\[
\xymatrix @C=3em @R=0.6em {
& {c_{xv}c_{t}}
\ar@3 []!<0pt,-10pt>;[dd]!<0pt,+10pt> ^{C'_{x,v,t}}
\\
{c_{x}c_{v}c_{t}}
	\ar@2@/^/ [ur] ^-{\alpha_{x,v}c_{t}}
	\ar@2@/_/ [dr] _-{c_{x}\alpha_{v,t}}
\\
& {c_{x}c_{w}c_{w'}}
	\ar@2 [r] _-{\alpha_{x,w}c_{w'}}
& {c_{xv}c_{z_{l}\ldots z_{q+1}}c_{w'}}
	\ar@2 [luu] _-{c_{xv}\alpha_{z_{l}\ldots z_{q+1}, w'}}
}
\]
with \typetroisbasep{$x$}{$v$}{$t$}{12}, and the $3$-cell $R_{\Gamma_3}(D_{x,v,t})$ where
\[
\xymatrix @!C @C=2.3em @R=0.6em {
&
c_{e} c_{e'}c_t
  \ar@2[r] ^{c_{e}\alpha_{e',t}}
     \ar@3 []!<40pt,-10pt>;[dd]!<40pt,10pt> ^{D_{x,v,t}} 
&
c_{e} c_{b}c_{b'}
  \ar@2[dr] ^{\alpha_{e,b}c_{b'}}
\\
c_xc_vc_t
  \ar@2[ur] ^{\alpha_{x,v}c_t}
  \ar@2[dr] _{c_x\alpha_{v,t}}
&&&
c_{a}c_{d} c_{b'}
\\
&
c_xc_{w}c_{w'}
  \ar@2[r] _{\alpha_{x,w}c_{w'}}
&
c_{a}c_{a'}c_{w'}
 \ar@2[ur] _{c_{a}\alpha_{a',w'}}
}
\]
with \typetroisbasep{$x$}{$v$}{$t$}{22}.
The homotopical reduction $R_{\Gamma_3}$ eliminates the $3$-cells of~$\overline{\Colo}_3(n)$ of the form $A_{x,v,t}$, $B_{x,v,t}$ and $C_{x,v,t}$, which are not of the form $C'_{x,v,t}$. We have then proved the following result.

\begin{theorem}
\label{Theorem:PreColo3Coherent}
For $n > 0$, the $(3,1)$-polygraph $\PreColo_3(n)$ is a coherent presentation of the monoid~$\P_n$.
\end{theorem}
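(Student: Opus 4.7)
The plan is to deduce this theorem from Proposition~\ref{Proposition:Colo3Reduced} by showing that the triple $\Gamma_3$ is a valid collapsible part of $\overline{\Colo}_3(n)$, and then computing the output of the associated homotopical reduction $R_{\Gamma_3}$. By {\cite[Theorem~2.3.4]{GaussentGuiraudMalbos14}}, since coherent presentations are preserved under homotopical reduction, it will suffice to identify the resulting $(3,1)$-polygraph with $\PreColo_3(n)$.

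The first step is to verify that $\Gamma_3 = \{A_{x,v,t}\} \cup \{B_{x,v,t}\} \cup \{C_{x,v,t}\}$ is indeed collapsible. Each $3$-cell in $\Gamma_3$ targets a $2$-cell of the form $\alpha_{xv,t}$ (for $A$ and $C$) or $\alpha_{x,vt}$ (for $B$), and none of these targets appears in the source of the corresponding $3$-cell, so condition~\textbf{i)} holds. For condition~\textbf{ii)}, I would observe that the target $2$-cells $\alpha_{xv,t}$ and $\alpha_{x,vt}$ that appear in $\Gamma_3$ are distinct as one ranges over the families: each redundant $2$-cell is the target of a unique $3$-cell in~$\Gamma_3$. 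The crucial point is condition~\textbf{iii)}, the existence of a well-founded order compatible with the reduction: this is precisely the order $\vartriangleleft$ defined in~\ref{WellFoundedOrderOn2Cells}, and one has to check that for every $3$-cell in $\Gamma_3$, its target is strictly larger than every generating $2$-cell appearing in its source. This is a finite case analysis on the forms of $A_{x,v,t}$, $B_{x,v,t}$, and $C_{x,v,t}$, comparing $\len$ and $C_r(P(-))$ data.

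The second step is to carry out the explicit computation of $R_{\Gamma_3}(\overline{\Colo}_3(n))$, which was already described in~\ref{SubSubSectionHomotopicalReduction}. The three successive reductions—eliminating $A_{x,v,t}$ with target $\alpha_{xv,t}$ in the case~$\typetroisbasep{$x$}{$v$}{$t$}{11}$, then $C_{x,v,t}$ with target $\alpha_{xv,t}$ in the case~$\typetroisbasep{$x$}{$v$}{$t$}{12}$, then $B_{x,v,t}$ with target $\alpha_{x,vt}$ in the case~$\typetroisbasep{$x$}{$v$}{$t$}{21}$—reduce the set of $2$-cells of $\Colo_2(n)$ successively to~(\ref{Equations:twoCellsA}), then~(\ref{Equations:twoCellsB}), then~(\ref{Equations:twoCellsC}). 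By Lemma~\ref{Lemma:Precol2} together with the definition of $\PreColo_2(n)$ in~\ref{Subsubsection:Pre-columnPresentation}, this final set coincides with the $2$-cells of $\PreColo_2(n)$.

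Finally, I would identify the surviving $3$-cells. The $3$-cells of $\overline{\Colo}_3(n)$ that are not eliminated are precisely those $\mathcal{X}_{x,v,t}$ of the form $C'_{x,v,t}$ (with $\typetroisbasep{$x$}{$v$}{$t$}{12}$) and $D_{x,v,t}$ (with $\typetroisbasep{$x$}{$v$}{$t$}{22}$), since $A$, $B$, $C$ are all in $\Gamma_3$. Their images under $R_{\Gamma_3}$ rewrite each occurrence of a removed $2$-cell $\alpha_{xv,t}$ in their boundary according to the substitutions $R_\Gamma(t(\gamma)) = R_\Gamma(s(\gamma))$; this produces exactly the two families of $3$-cells defining $\PreColo_3(n)$. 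The main technical obstacle I anticipate is bookkeeping in the well-founded order verification, since one must simultaneously manage the order on $2$-cells (by length and the $\ordrecolrev$ order) and the order on $3$-cells (the hierarchy $A \vartriangleleft C \vartriangleleft B \vartriangleleft D$) to ensure the iterated elimination terminates and yields well-defined $3$-cells in the reduced polygraph.
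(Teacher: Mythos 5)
Your proposal follows essentially the same route as the paper: Section~\ref{Subsection:PreColumnCoherentPresentation} proves the theorem precisely by taking $\Gamma_3=\{A_{x,v,t}\}\cup\{B_{x,v,t}\}\cup\{C_{x,v,t}\}$ as a collapsible part, using the well-founded order of~\ref{WellFoundedOrderOn2Cells}, performing the three successive eliminations that shrink the $2$-cells to the sets~(\ref{Equations:twoCellsA}), (\ref{Equations:twoCellsB}), (\ref{Equations:twoCellsC}), invoking Lemma~\ref{Lemma:Precol2} to identify the result with $\PreColo_2(n)$, and keeping the images $R_{\Gamma_3}(C'_{x,v,t})$ and $R_{\Gamma_3}(D_{x,v,t})$ as the surviving $3$-cells. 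The only cosmetic difference is that you make explicit the verification of the collapsibility conditions (including the Nielsen reorientation implicit in taking $\alpha_{xv,t}$, resp.\ $\alpha_{x,vt}$, as the target of $A_{x,v,t}$ and $C_{x,v,t}$, resp.\ $B_{x,v,t}$), which the paper leaves tacit.
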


\subsubsection{Example: coherent presentation of monoid \pdf{\P_2}} 
\label{P2Example}

The Knuth presentation $\Knuth_2(2)$ has generators $1$ and $2$ subject to the Knuth relations $\eta_{1,1,2} : 211 \dfl 121$ and $\epsilon_{1,2,2} : 221 \dfl 212$. This presentation is convergent with only one critical branching with source the $1$-cell $2211$. This critical branching is confluent:
\[
\xymatrix@!C@C=3em{
2211
  \ar@2@/^3ex/ [r] ^{2\eta_{1,1,2}}="src"
  \ar@2@/_3ex/ [r] _{\epsilon_{1,2,2}1}="tgt"
& 2121
  \ar@3 "src"!<-5pt,-15pt>;"tgt"!<-5pt,15pt>  ^-{\, C''}
}
\]
Following the homotopical completion procedure given in \ref{Subsubsection:HomotopicalCompletion}, the $2$-polygraph extended by the previous $3$-cell is a coherent presentation of the monoid $\P_2$.
Consider the column presentation~$\Colo_2(2)$ of the monoid $\P_2$   with $1$-cells $c_1$, $c_2$ and $c_{21}$ and $2$-cells~$\alpha_{2,1}$,~$\alpha_{1,21}$ and~$\alpha_{2,21}$. The coherent presentation $\Colo_3(2)$ has only one $3$-cell
\[
\xymatrix @C=3em @R=0.6em {
& {c_{21}c_{21}}
\ar@3 []!<-10pt,-10pt>;[dd]!<-10pt,+10pt> ^{C'_{2,1,21}}
\\
{c_2c_1c_{21}}
	\ar@2@/^/ [ur] ^-{\alpha_{2,1}c_{21}}
	\ar@2@/_/ [dr] _-{c_{2}\alpha_{1,21}}
\\
& {c_2c_{21}c_1}
	\ar@2 [r] _-{\alpha_{2,21}c_1}
& {c_{21}c_2c_1}
	\ar@2 [luu] _-{c_{21}\alpha_{2,1}}
}
\] 
It follows that the $(3,1)$-polygraphs $\overline{\Colo}_3(2)$ and $\Colo_3(2)$ coincide.
Moreover, in this case the set $\Gamma_3$ is  empty and the homotopical reduction $R_{\Gamma_3}$ is the identity and thus $\PreColo_3(2)$ is also equal to $\Colo_3(2)$.

In next section, we will show how to relate the coherent presentations $\Colo_3(2)$ and $\langle\, \Knuth_2(2)\;|\; C''\,\rangle$. 

\subsubsection{Example: coherent presentation of monoid \pdf{\P_3}} 

For the monoid $\P_3$, the Knuth presentation has~$3$ generators and $8$ relations. It is not convergent, but it can be completed by adding $3$ relations. The obtained presentation has $27$ $3$-cells corresponding to the $27$ critical branchings.
The column coherent presentation~$\Colo_3(3)$ of~$\P_3$ has $7$ generators, $22$ relations and $42$ $3$-cells. The coherent presentation~$\overline{\Colo}_3(3)$ has $7$ generators, $22$ relations and $34$ $3$-cells. After applying the homotopical reduction $R_{\Gamma_3}$, the coherent presentation~$\PreColo_3(3)$ admits~$7$~generators,~$22$~relations and $24$ $3$-cells.
We give in~\ref{SubSection:Computations} the values of number of cells of the $(3,1)$-polygraphs~$\overline{\Colo}_3(n)$ and~$\PreColo_3(n)$  for plactic monoids of rank~$n\leq 10$. 

\subsection{Knuth's coherent presentation}
\label{Section:KnuthCoherentPresentation}

We reduce the coherent presentation $\PreColo_3(n)$ into a coherent presentation of the monoid $\P_n$ whose underlying $2$-polygraph is $\Knuth_2(n)$. We proceed in three steps developed in the next sections.
\begin{enumerate}[{\bf Step 1.}]
\item We apply the inverse of the Tietze transformation~$T_{\gamma \leftarrow \alpha'}$, that coherently replaces the $2$-cells~$\gamma_{x_{p}\ldots x_{1}}$ by the $2$-cells~$\alpha'_{x_p,x_{p-1}\ldots x_{1}}$, for each column~$x_p\ldots x_1$  such that~ $\len(x_{p}\ldots x_{1})>2$.
\item We apply the inverse of the Tietze transformation~$T_{\eta,\epsilon\leftarrow \alpha'}$, that coherently replaces the $2$-cells~$\alpha'_{x,zy}$ by~$\eta_{x,y,z}^{c}$ for~$1\leq x\leq y < z\leq n$ and the $2$-cells~$\alpha'_{y,zx}$ by~$\epsilon_{x,y,z}^{c}$ for~\mbox{$1\leq x<y \leq z \leq n$}.
\item Finally for each column~$x_p\ldots x_1$, we coherently eliminate the generator~$c_{x_p\ldots x_1}$ together with  the $2$-cell~$\gamma_{x_p\ldots x_1}$ with respect to the order~$\ordrecoldeglex$.  
\end{enumerate}

\subsubsection{Step 1}

The Tietze transformation
$T_{\gamma \leftarrow \alpha'}: \tck{\CPC_2(n)}\to \tck{\PreColo_2(n)}$ 
defined in~Lemma~\ref{Precolo2nLemma} substitutes a $2$-cell 
$\alpha'_{x_p,x_{p-1}\ldots x_{1}}: c_{x_{p}}c_{x_{p-1}\ldots x_{1}}\odfl{} c_{x_p\ldots x_1}$ to the $2$-cell \linebreak $\gamma_{x_{p}\ldots x_{1}}:c_{x_p}\ldots c_{x_{1}} \odfl{} c_{x_p\ldots x_1}$, for each column $x_{p}\ldots x_{1}$ such that $\len(x_{p}\ldots x_{1})>2$, from the bigger column to the smaller one with respect to the total order~$\ordrecoldeglex$.

We consider the inverse of this Tietze transformation~$T_{\gamma \leftarrow \alpha'}^{-1}: \tck{\PreColo_2(n)}\to\tck{\CPC_2(n)}$
that substitutes the~$2$-cell~$\gamma_{x_{p}\ldots x_{1}}:c_{x_p}\ldots c_{x_{1}} \odfl{} c_{x_p\ldots x_1}$ to the $2$-cell~$\alpha'_{x_p,x_{p-1}\ldots x_{1}}: c_{x_{p}}c_{x_{p-1}\ldots x_{1}}\odfl{} c_{x_p\ldots x_1}$ 
\[
\xymatrix @C=1em @R=0,6em {
{c_{x_{p}} c_{x_{p-1}\ldots x_{1}}}
  \ar@2 [rr] ^-{\alpha'_{x_{p},x_{p-1}\ldots x_{1}}}
   &&{c_{x_{p}\ldots x_{1}}}
\\
   \\
&{c_{x_{p}}\ldots c_{x_{1}}}
\ar@2@/_/ [uur] _-{\gamma_{x_{p}\ldots x_{1}}}
\ar@2@/^/ [uul] ^-{c_{x_{p}}\gamma_{x_{p-1}\ldots x_{1}}}
}
\]
for each column~$x_p\ldots x_1$ such that~$\len(x_{p}\ldots x_{1})>2$ with respect to the order~$\ordrecoldeglex$.

Let us denote by~$\CPC_3(n)$ the $(3,1)$-polygraph whose underlying $2$-polygraph is~$\CPC_2(n)$, and the set of $3$-cells is defined by 
\[
\{
\;
T_{\gamma \leftarrow \alpha'}^{-1}(R_{\Gamma_3}(C'_{x,v,t}))\quad \text{for \typetroisbasep{$x$}{$v$}{$t$}{12}}
\;
\}
\;\cup\;
\{
\;
T_{\gamma \leftarrow \alpha'}^{-1}(R_{\Gamma_3}(D_{x,v,t}))\quad \text{for \typetroisbasep{$x$}{$v$}{$t$}{22}}
\;
\}
.
\]
In this way, we extend the Tietze transformation~$T_{\gamma \leftarrow \alpha'}^{-1}$  into a Tietze transformation between the $(3,1)$-polygraphs~$\PreColo_3(n)$ and~$\CPC_3(n)$.
The $(3,1)$-polygraph $\PreColo_3(n)$ being a coherent presentation of the monoid $\P_n$ and the Tietze transformation $T_{\gamma \leftarrow \alpha'}^{-1}$ preserves the coherence property, hence we have the following result.

\begin{lemma}
For $n > 0$, the monoid $\P_{n}$ admits $\CPC_3(n)$ as a coherent presentation.
\end{lemma}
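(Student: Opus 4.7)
The plan is to recognise $\CPC_3(n)$ as the image of $\PreColo_3(n)$ under a Tietze transformation of $(3,1)$-polygraphs, and then invoke the general principle that Tietze transformations transport coherent presentations to coherent presentations.

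First, I would check that $T_{\gamma \leftarrow \alpha'}^{-1}$ as constructed in the paragraph preceding the statement is indeed a Tietze transformation of the underlying $2$-polygraphs $\PreColo_2(n)$ and $\CPC_2(n)$. Recall from Lemma~\ref{Precolo2nLemma} that $T_{\gamma \leftarrow \alpha'}$ factors as a composite of Nielsen transformations $\kappa'_{\gamma_{x_p\ldots x_1} \leftarrow \alpha'_{x_p,x_{p-1}\ldots x_1}}$ performed from the largest column to the smallest for $\ordrecoldeglex$. Each such Nielsen transformation is by definition an adjunction of the new $2$-cell followed by an elimination of the old one, so its inverse is the symmetric Nielsen transformation $\kappa'_{\alpha'_{x_p,x_{p-1}\ldots x_1} \leftarrow \gamma_{x_p\ldots x_1}}$. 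Composing these inverses in the reverse order yields the $2$-functor $T_{\gamma \leftarrow \alpha'}^{-1}$, which is therefore a genuine Tietze transformation.

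Next, I would promote $T_{\gamma \leftarrow \alpha'}^{-1}$ to a Tietze transformation of $(3,1)$-polygraphs. By the very definition of $\CPC_3(n)$, its generating $3$-cells are exactly the images $T_{\gamma \leftarrow \alpha'}^{-1}(R_{\Gamma_3}(C'_{x,v,t}))$ and $T_{\gamma \leftarrow \alpha'}^{-1}(R_{\Gamma_3}(D_{x,v,t}))$ of the generating $3$-cells of $\PreColo_3(n)$. Consequently, applying a coherent adjunction for each such $3$-cell after having performed $T_{\gamma \leftarrow \alpha'}^{-1}$ at the level of $2$-polygraphs realises a Tietze equivalence between $\PreColo_3(n)$ and $\CPC_3(n)$. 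The inverse extension of $T_{\gamma \leftarrow \alpha'}$ to $(3,1)$-polygraphs is built symmetrically and shows that this equivalence is genuine, not just a one-way transformation.

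Finally, by Theorem~2.1.3 of \cite{GaussentGuiraudMalbos14}, Tietze-equivalent $(3,1)$-polygraphs present isomorphic monoids and have Tietze-equivalent quotient $2$-categories, so the property of being a coherent presentation is preserved. Since $\PreColo_3(n)$ is a coherent presentation of $\P_n$ by Theorem~\ref{Theorem:PreColo3Coherent}, so is $\CPC_3(n)$. The only point that requires a little care is verifying that the sources and targets of the $3$-cells $R_{\Gamma_3}(C'_{x,v,t})$ and $R_{\Gamma_3}(D_{x,v,t})$ in $\PreColo_3(n)$ are correctly transported through $T_{\gamma \leftarrow \alpha'}^{-1}$: this is automatic since $T_{\gamma \leftarrow \alpha'}^{-1}$ merely exchanges each $2$-cell $\alpha'_{x_p,x_{p-1}\ldots x_1}$ with the corresponding composite of $\gamma$'s and leaves the other generating $2$-cells unchanged, so no genuine obstacle arises.
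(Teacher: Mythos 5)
Your proposal is correct and follows essentially the same route as the paper: the paper also defines $\CPC_3(n)$ as the image of the $3$-cells of $\PreColo_3(n)$ under the extension of $T_{\gamma\leftarrow\alpha'}^{-1}$ to $(3,1)$-polygraphs and concludes by the preservation of coherence under Tietze transformations, using Theorem~\ref{Theorem:PreColo3Coherent}. Your version merely spells out a few details (inverses of Nielsen transformations, transport of sources and targets) that the paper leaves implicit.
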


\subsubsection{Step 2}

The Tietze transformation~$T_{\eta,\epsilon\leftarrow \alpha'}$ from~$\tck{\Knuthcc_2(n)}$ into~$\tck{\CPC_2(n)}$ defined in the proof of Lemma~\ref{Gamma2(n)Lemma} replaces the $2$-cells $\eta_{x,y,z}^{c}$ and $\epsilon_{x,y,z}^{c}$ in  $\Knuthcc_2(n)$ by composite of $2$-cells in $\CPC_2(n)$.

Let us consider the inverse of this Tietze transformation 
$T_{\eta,\epsilon\leftarrow \alpha'}^{-1} :
\tck{\CPC_2(n)} \fll \tck{\Knuthcc_2(n)}$.
making the following transformations. For every $1\leq x\leq y<z \leq n$, $T_{\eta,\epsilon\leftarrow \alpha'}^{-1}$ substitutes the $2$-cell~$\eta_{x,y,z}^{c} : c_zc_xc_y \dfl c_xc_zc_y$ to the $2$-cell~$\alpha'_{x,zy}$: 

\[
\xymatrix @C=1.5em @R=0.6em {
& {c_{x}c_{z}c_{y}}
  \ar@2[r] ^-{c_{x}\gamma_{zy}}
& {c_{x}c_{zy}}
  \ar@2@/^/ [ddl] ^-{\alpha'_{x,zy}}
\\
{c_{z}c_{x}c_{y}}
	\ar@2@/_/ [dr] _-{\gamma_{zx}c_{y}}
\\
& {c_{zx}c_{y}}
}
\]
For every $1\leq x<y\leq z \leq n$,  $T_{\eta,\epsilon\leftarrow \alpha'}^{-1}$ substitutes the $2$-cell $\epsilon_{x,y,z}^{c} : c_yc_zc_x \dfl c_yc_xc_z$ to the $2$-cell~$\alpha'_{y,zx}$: 
\[
\xymatrix @C=1.5em @R=0,6em {
& {c_{y}c_{x}c_{z}}
  \ar@2[r] ^-{\gamma_{yx}c_{z}}
& {c_{yx}c_{z}}
\\
{c_{y}c_{z}c_{x}}
	\ar@2@/_/ [dr] _-{c_{y}\gamma_{zx}}
\\
& {c_{y}c_{zx}}
\ar@2@/_/ [uur] _-{\alpha'_{y,zx}}
}
\]
Let us denote by $\Knuthcc_3(n)$ the $(3,1)$-polygraph whose 
underlying $2$-polygraph is $\Knuthcc_{2}(n)$ and whose set of $3$-cells is  
\[
\{
\;
T_{\eta,\epsilon\leftarrow \alpha'}^{-1}(T_{\gamma \leftarrow \alpha'}^{-1}(R_{\Gamma_3}(C'_{x,v,t})))\quad \text{for \typetroisbasep{$x$}{$v$}{$t$}{12}}
\;
\}
\;\cup\;
\{
\;
T_{\eta,\epsilon\leftarrow \alpha'}^{-1}(T_{\gamma \leftarrow \alpha'}^{-1}(R_{\Gamma_3}(D_{x,v,t})))\quad \text{for \typetroisbasep{$x$}{$v$}{$t$}{22}}
\;
\}.
\]
We extend the Tietze transformation $T_{\eta,\epsilon\leftarrow \alpha'}^{-1}$ into a Tietze transformation between $(3,1)$-polygraphs
\[
T_{\eta,\epsilon\leftarrow \alpha'}^{-1} : \tck{\CPC_3(n)} \fll \tck{\Knuthcc_3(n)},
\] 
where~
the $(3,1)$-polygraph~$\CPC_3(n)$ is a coherent presentation of the monoid $\P_n$ and the Tietze transformation~$T_{\eta,\epsilon\leftarrow \alpha'}^{-1}$ preserves the coherence property, hence we have the following result.

\begin{lemma}
For $n > 0$, the monoid $\P_{n}$ admits $\Knuthcc_3(n)$ as a coherent presentation.
\end{lemma}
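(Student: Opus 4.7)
The plan is to mimic exactly the argument used for the previous lemma (the one obtaining $\CPC_3(n)$ from $\PreColo_3(n)$). Since that lemma has already established that $\CPC_3(n)$ is a coherent presentation of $\P_n$, and the construction preceding the statement supplies the Tietze transformation $T_{\eta,\epsilon\leftarrow \alpha'}^{-1}:\tck{\CPC_2(n)} \fll \tck{\Knuthcc_2(n)}$ extended to $(3,1)$-polygraphs, it suffices to invoke the general principle that Tietze transformations preserve coherent presentations, namely {\cite[Theorem 2.1.3.]{GaussentGuiraudMalbos14}}.

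Concretely, I would first check that at the $2$-polygraph level the map $T_{\eta,\epsilon\leftarrow \alpha'}^{-1}$ decomposes as the composition, taken with respect to the lexicographic order on triples $(x,y,z)$, of the Nielsen transformations $\kappa_{\alpha'_{x,zy} \leftarrow \eta^{c}_{x,y,z}}$ for $1\leq x\leq y<z\leq n$ and $\kappa_{\alpha'_{y,zx} \leftarrow \epsilon^{c}_{x,y,z}}$ for $1\leq x<y\leq z\leq n$. Each such Nielsen transformation is an elementary Tietze transformation (adjunction of the new $2$-cell followed by elimination of the old), so the composite is a genuine Tietze equivalence between $\tck{\CPC_2(n)}$ and $\tck{\Knuthcc_2(n)}$, inverse to the transformation $T_{\eta,\epsilon\leftarrow \alpha'}$ constructed in Lemma~\ref{Gamma2(n)Lemma}.

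Next, I would extend this $2$-functor to a $3$-functor $\tck{\CPC_3(n)} \fll \tck{\Knuthcc_3(n)}$ by letting it act on each $3$-cell of $\CPC_3(n)$ via componentwise substitution of the relevant $2$-cells in its source and target. By the very definition of $\Knuthcc_3(n)$, its $3$-cells are precisely the images of $C'_{x,v,t}$ and $D_{x,v,t}$, so the image polygraph coincides with the one displayed before the statement, and the $3$-functor is well-defined on the globular extension.

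Finally, since $\CPC_3(n)$ is a coherent presentation of $\P_{n}$ (by the preceding lemma) and Tietze transformations of $(3,1)$-polygraphs preserve both the presented monoid and the property of forming a homotopy basis, the image $\Knuthcc_3(n)$ is a coherent presentation of~$\P_n$. The only non-clerical point to verify is that the substitutions implicit in the $3$-functor really send a $2$-sphere in $\tck{\CPC_2(n)}$ to a $2$-sphere in $\tck{\Knuthcc_2(n)}$ with the prescribed boundary; this follows at once from the two local triangles displayed above, which exhibit $\alpha'_{x,zy}$ (respectively $\alpha'_{y,zx}$) as the composite of $\eta^{c}_{x,y,z}$ (respectively $\epsilon^{c}_{x,y,z}$) with the appropriate $\gamma$-cells. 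I expect no substantive obstacle; the content of the lemma is genuinely a transport-of-structure argument once the Tietze data has been assembled.
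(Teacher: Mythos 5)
Your proposal is correct and follows essentially the same route as the paper: the paper also defines $\Knuthcc_3(n)$ by transporting the $3$-cells of $\CPC_3(n)$ along the inverse Tietze transformation $T_{\eta,\epsilon\leftarrow \alpha'}^{-1}$ (built from the Nielsen substitutions of Lemma~\ref{Gamma2(n)Lemma}) and concludes by the fact that Tietze transformations of $(3,1)$-polygraphs preserve coherent presentations. Your write-up merely makes explicit the decomposition into elementary Tietze transformations that the paper leaves implicit.
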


\subsubsection{Step 3}

Finally, in order to obtain the Knuth coherent presentation, we perform an homotopical reduction, obtained using the homotopical reduction~$R_{\Gamma_2}$ on the~$(3,1)$-polygraph~$\Knuthcc_3(n)$ whose collapsible part~$\Gamma_2$ is defined by the $2$-cells~$\gamma_u$ of~$\CC_2(n)$ and the well-founded order~$\ordrecoldeglex$. 
Thus, for every $2$-cell $\gamma_{x_p\ldots x_1}:c_{x_{p}}\ldots c_{x_{1}}\odfl{} c_{x_{p}\ldots x_{1}}$ in~$\CC_2(n)$, we eliminate the generator $c_{x_{p}\ldots x_{1}}$ together with the $2$-cell~$\gamma_{x_p\ldots x_1}$, from the bigger column to the smaller one with respect to the order~$\ordrecoldeglex$. 

\subsubsection{Knuth coherent presentation}

Using the Tietze transformations constructed in the previous sections,  we consider the following composite of Tietze transformations
\[
\mathcal{R}\; :=\;   R_{\Gamma_2}\circ T_{\eta,\epsilon\leftarrow \alpha'}^{-1}\circ T_{\gamma \leftarrow \alpha'}^{-1}\circ R_{\Gamma_3}
\]
defined from $\tck{\overline{\Colo}_3(n)}$ to~$\tck{\Knuthcc_3(n)}$ as follows. Firstly, the transformation~$\mathcal{R}$ eliminates the $3$-cells of~$\overline{\Colo}_3(n)$ of the form $A_{x,v,t}$, $B_{x,v,t}$ and $C_{x,v,t}$ which are not of the form $C'_{x,v,t}$ and reduced its set of $2$-cells to~$\PreColo_2(n)$. Secondly, this transformation coherently replaces the $2$-cells~$\gamma_{x_{p}\ldots x_{1}}$ by the $2$-cells~$\alpha'_{x_p,x_{p-1}\ldots x_{1}}$, for each column~$x_p\ldots x_1$  such that~ $\len(x_{p}\ldots x_{1})>2$,
the $2$-cells~$\alpha'_{x,zy}$ by~$\eta_{x,y,z}^{c}$ for~$1\leq x\leq y < z\leq n$ and the $2$-cells~$\alpha'_{y,zx}$ by~$\epsilon_{x,y,z}^{c}$ for~\mbox{$1\leq x<y \leq z \leq n$}. Finally, for each column~$x_p\ldots x_1$, the transformation~$\mathcal{R}$ eliminates the generator~$c_{x_p\ldots x_1}$ together with  the $2$-cell~$\gamma_{x_p\ldots x_1}$ with respect to the order~$\ordrecoldeglex$.
  
Let us denote by $\Knuth_{3}(n)$ the extended presentation of the monoid $\P_{n}$ obtained from $\Knuth_{2}(n)$ by adjunction of  the following set of $3$-cells
\[
\{
\;
\mathcal{R}(C'_{x,v,t})\quad \text{for \typetroisbasep{$x$}{$v$}{$t$}{12}}
\;
\}
\;\cup\;
\{
\;
\mathcal{R}(D_{x,v,t})\quad \text{for \typetroisbasep{$x$}{$v$}{$t$}{22}}
\;
\}
.\]

The transformation~$\mathcal{R}$ being a composite of Tietze transformations, it follows the following result.

%

\begin{theorem}
\label{KnuthcoherentTheorem}
For $n > 0$, the $(3,1)$-polygraph $\Knuth_3(n)$ is a coherent presentation of the monoid~$\P_n$.
\end{theorem}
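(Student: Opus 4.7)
The plan is to obtain Theorem~\ref{KnuthcoherentTheorem} as the terminal step in a chain of coherence-preserving Tietze transformations of $(3,1)$-polygraphs, starting from~$\PreColo_3(n)$, which is already known to be a coherent presentation of~$\P_n$ by Theorem~\ref{Theorem:PreColo3Coherent}. The composite
\[
\mathcal{R} \;=\; R_{\Gamma_2}\circ T_{\eta,\epsilon\leftarrow\alpha'}^{-1}\circ T_{\gamma\leftarrow\alpha'}^{-1}\circ R_{\Gamma_3}
\]
has been assembled in the discussion preceding the theorem; what remains is to check that each factor is a genuine Tietze transformation of $(3,1)$-polygraphs and that the final image is exactly $\Knuth_3(n)$.

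First I would verify that each of the four factors is a Tietze transformation in the sense of~\cite[Section 2.1]{GaussentGuiraudMalbos14}. The two middle factors $T_{\gamma\leftarrow\alpha'}^{-1}$ and $T_{\eta,\epsilon\leftarrow\alpha'}^{-1}$ are inverses of the $2$-polygraph Tietze transformations constructed in Lemmas~\ref{Gamma2(n)Lemma} and~\ref{Precolo2nLemma}; each is a composite of Nielsen-type substitutions, and each extends canonically to a Tietze transformation between $(3,1)$-polygraphs by applying the substitution inside the sources and targets of the $3$-cells. For the outer homotopical reductions, one must verify that $\Gamma_3$ and $\Gamma_2$ are collapsible parts. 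For $\Gamma_3$ this was carried out in~\ref{SubSubSectionHomotopicalReduction} using the order $\vartriangleleft$ on $2$-cells; for $\Gamma_2$ it follows from the fact that each $\gamma_{x_p\ldots x_1}$ has target $c_{x_p\ldots x_1}$ absent from its source, together with the well-foundedness of~$\ordrecoldeglex$ on~$\col(n)$, which ensures that $c_{x_p\ldots x_1}$ is strictly greater than all smaller column generators appearing in the source of $\gamma_{x_p\ldots x_1}$.

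Next I would compute the image of the underlying data under~$\mathcal{R}$, confirming that the underlying $2$-polygraph is $\Knuth_2(n)$ and that the surviving $3$-cells are exactly $\mathcal{R}(C'_{x,v,t})$ and~$\mathcal{R}(D_{x,v,t})$. At the $2$-polygraph level the transformation evolves $\PreColo_2(n)\to\CPC_2(n)\to\Knuthcc_2(n)\to\Knuth_2(n)$, with the final reduction eliminating every column generator $c_u$ of length~$\geq 2$ together with its defining $2$-cell $\gamma_u$, leaving exactly the Knuth generators and relations. Since the only $3$-cells of $\PreColo_3(n)$ are the $R_{\Gamma_3}(C'_{x,v,t})$ for \typetroisbasep{$x$}{$v$}{$t$}{12} and the $R_{\Gamma_3}(D_{x,v,t})$ for \typetroisbasep{$x$}{$v$}{$t$}{22}, applying the remaining three stages of $\mathcal{R}$ to these $3$-cells yields by construction precisely the $3$-cells defining $\Knuth_3(n)$.

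The main technical subtlety, and the likely source of any difficulty, lies in the bookkeeping for $R_{\Gamma_2}$. Each elimination $\pi_{\gamma_u}$ must rewrite every occurrence of $c_u$ inside the (possibly deeply nested) sources and targets of the surviving $3$-cells into the appropriate composite of rewriting steps over smaller column generators. One must confirm that this recursive substitution is well-defined and terminates thanks to the order~$\ordrecoldeglex$, processing columns from longest to shortest, and that each resulting diagram is still a pair of $1$-composable paths in $\tck{\Knuth_2(n)}$ with matching source and target $2$-cells. Once this is verified, the conclusion is immediate: $\mathcal{R}$ is a Tietze transformation carrying the coherent presentation $\PreColo_3(n)$ to $\Knuth_3(n)$, and by~\cite[Theorem~2.1.3.]{GaussentGuiraudMalbos14} the latter is also a coherent presentation of~$\P_n$.
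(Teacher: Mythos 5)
Your proposal follows essentially the same route as the paper: starting from the coherent presentation $\PreColo_3(n)$ of Theorem~\ref{Theorem:PreColo3Coherent} and applying the composite $\mathcal{R}= R_{\Gamma_2}\circ T_{\eta,\epsilon\leftarrow \alpha'}^{-1}\circ T_{\gamma \leftarrow \alpha'}^{-1}\circ R_{\Gamma_3}$ of coherence-preserving Tietze transformations, whose image is by construction $\Knuth_3(n)$. Your identification of the checks required for each factor (collapsibility of $\Gamma_2$ and $\Gamma_3$, the well-founded order $\ordrecoldeglex$ governing the eliminations in $R_{\Gamma_2}$) matches the paper's Steps 1--3 in Section~\ref{Section:KnuthCoherentPresentation}.
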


\subsubsection{Example: Knuth's coherent presentation of the monoid \pdf{\P_2}}

We have seen in Example~\ref{P2Example} that the $(3,1)$-polygraphs~$\Colo_3(2)$, $\overline{\Colo}_3(2)$ and $\PreColo_3(2)$ are equal. The coherent presentation~$\PreColo_3(2)$ is given by
\[
\PreColo_1(2)=\{c_1,c_2,c_{21}\},
\qquad
\PreColo_2(2)=\{\alpha_{2,1}, \alpha_{1,21}, \alpha_{2,21}\},
\qquad
\PreColo_3(2)=\{C'_{2,1,21}\},
\]
where $C'_{2,1,21}$ is the following $3$-cell:
\[
\xymatrix @C=3em @R=0.6em {
& {c_{21}c_{21}}
\ar@3 []!<-10pt,-10pt>;[dd]!<-10pt,+10pt> ^{C'_{2,1,21}}
\\
{c_2c_1c_{21}}
	\ar@2@/^/ [ur] ^-{\alpha_{2,1}c_{21}}
	\ar@2@/_/ [dr] _-{c_{2}\alpha_{1,21}}
\\
& {c_2c_{21}c_1}
	\ar@2 [r] _-{\alpha_{2,21}c_1}
& {c_{21}c_2c_1}
	\ar@2 [luu] _-{c_{21}\alpha_{2,1}}
}
\] 
By definition of the $2$-cells of $C_2(2)$, we have $\gamma_{21}:= \alpha_{2,1}$. Thus we obtain that~\mbox{$T_{\gamma \leftarrow \alpha'}^{-1}(C'_{2,1,21}) = C'_{2,1,21}$} up to replace all the $2$-cells~$\alpha_{2,1}$ in~$C'_{2,1,21}$ by~$\gamma_{21}$. Hence, the coherent presentation~$CPC_3(2)$ is equal to~$\PreColo_3(2)$.
In order to compute the $3$-cell~$T_{\eta,\epsilon\leftarrow \alpha'}^{-1}(T_{\gamma \leftarrow \alpha'}^{-1}(C'_{2,1,21}))$, the $2$-cells $\alpha_{1,21}$ and~$\alpha_{2,21}$ in~$C'_{2,1,21}$ are respectively replaced  by the $2$-cells~$\eta_{1,1,2}^{\text{c}}$ and~$\epsilon_{1,2,2}^{\text{c}}$  as in the following diagram
\begin{equation}
\label{T2transformation}
\xymatrix @C=3em @R=0.6em{
&&{c_{21}c_{21}}
    \ar@3 []!<-10pt,-10pt>;[dd]!<-10pt,+10pt> ^{C'_{2,1,21}}
\\
&c_2c_1c_{21}
    \ar@2@/^/ [ur] ^-{\gamma_{21}c_{21}}
   \ar@{..>}@2 @/_/ [dr] _-{c_2\xcancel{\alpha_{1,21}}}
\\
 {c_2c_1c_2c_1}   
  \ar@2@/^/ [ur] ^-{c_2c_1\gamma_{21}}
&& c_2c_{21}c_1
    \ar@{..>}@2  [r] _-{\xcancel{\alpha_{2,21}}c_1}
& c_{21}c_2c_1
   \ar@2 [luu] _-{c_{21}\gamma_{21}}
\\
\\
&&{c_2c_2c_1c_1}
\ar@2@/^/ [uull] ^-{c_2\eta_{1,1,2}^{\text{c}}}
 \ar@2 [r] _-{\epsilon_{1,2,2}^{\text{c}}c_1}
 \ar@2 [uu] _-{c_2\gamma_{21}c_1}
&{c_2c_1c_2c_1}
\ar@2 [uu] _-{\gamma_{21}c_2c_1}
}
\end{equation}
where the cancel symbol means that the corresponding $2$-cell is removed. Hence the coherent presentation~$\Knuthcc_3(2)$ of~$\P_2$ has for $1$-cells $c_1$, $c_2$ and $c_{21}$,  for $2$-cells~$\alpha_{2,1}$, $\alpha_{1,21}$ and $\alpha_{2,21}$ and the only $3$-cell~(\ref{T2transformation}).
Let us compute the Knuth coherent presentation $\Knuth_3(2)$. The $3$-cell~$R_{\Gamma_2}(T_{\eta,\epsilon\leftarrow \alpha'}^{-1}(T_{\gamma \leftarrow \alpha'}^{-1}(C'_{2,1,21})))$ is obtained from~(\ref{T2transformation}) by removing the $2$-cell~$\gamma_{21}$ together with the~$1$-cell~$c_{21}$. Thus we obtain the following $3$-cell, where the cancel symbol means that the corresponding element is removed,
\[
\xymatrix @C=3em @R=0.6em{
&&{\xcancel{c_{21}}\xcancel{c_{21}}}
\\
&c_2c_1\xcancel{c_{21}}
   \ar@{..>}@2 @/^/ [ur] ^-{\xcancel{\gamma_{21}c_{21}}}
\\
 {c_2c_1c_2c_1}   
 \ar@{..>}@2  @/^/ [ur] ^-{c_2c_1\xcancel{\gamma_{21}}}
&& c_2\xcancel{c_{21}}c_1
& \xcancel{c_{21}}c_2c_1
   \ar@{..>}@2 [luu] _-{c_{21}\xcancel{\gamma_{21}}}
\\
\\
&&{c_2c_2c_1c_1}
\ar@2@/^/ [uull] ^-{c_2\eta_{1,1,2}^{\text{c}}}
 \ar@2 [r] _-{\epsilon_{1,2,2}^{\text{c}}c_1}
\ar@{..>}@2[uu] _-{c_2\xcancel{\gamma_{21}}c_1}
&{c_2c_1c_2c_1}
\ar@{..>}@2[uu] _-{\xcancel{\gamma_{21}}c_2c_1}
}
\]
Hence, the Knuth coherent presentation $\Knuth_3(2)$ of the monoid $\P_2$ has generators $c_1$ and $c_2$ subject to the Knuth relations $\eta_{1,1,2}^{\text{c}} : c_2c_1c_1 \dfl c_1c_2c_1$ and $\epsilon_{1,2,2}^{\text{c}} : c_2c_2c_1 \dfl c_2c_1c_2$ and the following~$3$-cell
\[
\xymatrix@!C@C=3em{
c_2c_2c_1c_1
  \ar@2@/^3ex/ [r] ^{2\eta_{1,1,2}^{\text{c}}}="src"
  \ar@2@/_3ex/ [r] _{\epsilon_{1,2,2}^{\text{c}}1}="tgt"
& c_2c_1c_2c_1
    \ar@3 "src"!<-5pt,-15pt>;"tgt"!<-5pt,15pt>  ^-{\, C''}
}
\]
In this way, we obtain the Knuth coherent presentation of the monoid $\P_2$ that we obtain in Example~\ref{P2Example} as a consequence of the fact that the $2$-polygraph $\Knuth_2(2)$ is convergent.

\subsubsection{Coherent presentations in small ranks}
\label{SubSection:Computations}

Let us denote by $\Knuthkb_2(n)$ the convergent $2$-polygraph obtained from $\Knuth_2(n)$ by the Knuth-Bendix completion using the lexicographic order.
For $n=3$, the polygraph $\Knuthkb_2(3)$ is finite, but $\Knuthkb_2(n)$ is infinite for $n\geq 4$, \cite{KubatOkninski14}.
Let us denote by $\Knuthkb_3(n)$ the Squier completion of $\Knuthkb_2(n)$. For $n\geq 4$, the polygraph $\Knuthkb_2(n)$ having an infinite set of critical branching, the set of $3$-cells of $\Knuthkb_3(n)$ is infinite. However, the $(3,1)$-polygraph $\Knuth_3(n)$ constructed in this section is a finite coherent convergent presentation of $\P_n$.
Table~\ref{Table:NombreCellules} presents the number of cells of the coherent presentations~$\Knuth_3(n)$, $\overline{\Colo}_3(n)$ and~$\Colo_3(n)$ of the monoid~$\P_n$.

\medskip

\begin{table}[H]
\begin{small}
\hspace{-2cm}
\begin{tabular}{|r|r|r|r|r|r|r|r|r|r|}
  \hline
$n$  & $\Knuth_1(n)$ & $\Colo_1(n)$ & $\Knuth_2(n)$& $\Knuthkb_2(n)$&$\Colo_2(n)$&$\Knuthkb_3(n)$& $\Knuth_3(n)$& $\overline{\Colo}_3(n)$ & $\Colo_3(n)$ \\
  \hline
  $1$ &1 &1  &0 & 0& 0&0& 0 &0  & 0 \\
  $2$ &2 &3  &2 & 2& 3&1& 1 &1  & 1 \\
  $3$ &3 &7  &8 & 11& 22&27& 24 &34 &42 \\
  $4$ &4 &15  &20  &$\infty$& 115&$\infty$&242 &330  &621 \\
  $5$ &5 &31  &40  & $\infty$& 531&$\infty$& 1726&2225 &6893\\
  $6$ &6 &63  &70 & $\infty$& 2317&$\infty$& 10273&12635&67635\\
  $7$ &7 &127 &112 &$\infty$& 9822&$\infty$&  55016&65282 &623010\\
  $8$ &8 &255  &168 &$\infty$&40971&$\infty$&  275868&318708&5534197\\
  $9$ &9 &511  &240  &$\infty$&169255&$\infty$&   1324970&1500465&48052953\\
  $10$ &10 &1023  &330&$\infty$& 694837&$\infty$&   6178939&6892325 &410881483\\
  \hline
\end{tabular}
\end{small}
\caption{Number of cells of $(3,1)$-polygraphs~$\Knuth_3(n)$, $\overline{\Colo}_3(n)$ and~$\Colo_3(n)$, for $1\leq n \leq 10$.}
\label{Table:NombreCellules}
\end{table}

\subsubsection{Actions of plactic monoids on categories}

In~\cite{GaussentGuiraudMalbos14}, the authors give a description of the category of actions of a monoid on categories in terms of coherent presentations.
Using this description, Theorem~\ref{KnuthcoherentTheorem} allows to present actions of plactic monoids on categories as follows. The category~$\mathrm{Act}(\P_n)$ of actions of the monoid $\P_n$ on categories is equivalent to the category of $2$-functors from the $(2,1)$-category~$\tck{\Knuth_2(n)}$ to the category $\Cat$ of categories, that sends the $3$-cells of $\Knuth_3(n)$ to commutative diagrams in~$\Cat$. 

\subsubsection{Higher syzygies for the plactic monoid}

In \cite{GuiraudMalbos12advances}, the authors show how to extend a convergent presentation of a monoid into a polygraphic resolution of the monoid, that is, a cofibrant replacement of the monoid in the category of $(\infty,1)$-categories.
The column presentation $\Colo_2(n)$ of the monoid~$\P_n$ can then be extended into a polygraphic resolution whose $n$-cells, for every $n\geq 3$, are indexed by~\mbox{$(n-1)$-fold} branching of $\Colo_2(n)$.
We can explicit the $4$-cells of this resolution, which correspond to the confluence diagrams induced by critical triple branchings. That is,  for columns~$u$, $v$, $t$ and~$e$ in~$\col(n)$ such that\mbox{~\typedeuxbase{$u$}{$v$}{0},}~\typedeuxbase{$v$}{$t$}{0} and~\typedeuxbase{$t$}{$e$}{0}, there is a critical triple branching with source $c_{u}c_{v}c_tc_e$. Using the same arguments of Section~\ref{Generating triple confluences}, we can show that the confluence diagram induced by this triple branching is represented by the $3$-sphere $\Omega_{u,v,t,e}$ whose the source is the $3$-cell  
\[
\scalebox{0.75}{
\xymatrix @C=0.6em @R=1em{
&
&
{c_uc_vc_{m}c_{m'}}
\ar@2[rr] ^-{\alpha_{v,m}} _-{}
\ar@3 []!<10pt,12pt>;[dd]!<10pt,5pt> ^{c_u\mathcal{X}_{v,t,e}} 
&
&
{c_uc_{s_1}c_{s'_1}c_{m'}}
\ar@2[rr] ^-{\alpha_{s'_1,m'}} _-{}
&
&
{c_uc_{s_1}c_{e_1}c_{e'_1}}
\ar@2[rr] ^-{} ^-{\alpha_{u,s_1}} 
\ar@3 []!<30pt,-12pt>;[dddd]!<30pt,30pt> ^{\mathcal{X}_{u,p_1,f_1}c_{e'_1}} 
&
&
{c_{n_1}c_{n'_1}c_{e_1}c_{e'_1}}
\ar@2[ddrr] ^-{\alpha_{n'_1,e_1}} _-{}
\\
\\
&
&
&
&
{c_{u}c_{p_1}c_{f_1}c_{e'_1}}
\ar@2[uurr] _-{\alpha_{p_1,f_1}} _-{}
\ar@2[ddrr] ^-{\alpha_{u,p_1}} _-{}
&&&&&&
{c_{n_1}c_{b_1}c_{b'_1}c_{e'_1}}
\ar@2[ddrr] ^-{\alpha_{b'_1,e'_1}} _-{}
\\
\\
{c_{u}c_{v}c_{t}c_{e}}
\ar@2[uuuurr] ^-{\alpha_{t,e}} _-{}
\ar@2[rr] ^-{\alpha_{v,t}} _-{}
\ar@2[ddddrr] _-{\alpha_{u,v}} _-{}
&&
{c_{u}c_{p_1}c_{p'_1}c_{e}}
\ar@2[uurr] _-{\alpha_{p'_1,e}} _-{}
\ar@2[ddrr] ^-{} ^-{\alpha_{u,p_1}}
\ar@3 []!<5pt,-12pt>;[dddd]!<5pt,20pt> ^{\mathcal{X}_{u,v,t}c_e} 
&&
\equiv
&&
{c_{d_1}c_{g_1}c_{f_1}c_{e'_1}}
\ar@2[rr] ^-{\alpha_{g_1,f_1}} _-{}
\ar@3 []!<30pt,-12pt>;[dddd]!<30pt,30pt> ^{c_{d_1}\mathcal{X}_{g_1,p'_1,e}} 
&&
{c_{d_1}c_{m_1}c_{b'_1}c_{e'_1}}
\ar@2[uurr] ^-{\alpha_{d_1,m_1}} _-{}
\ar@2[ddrr] ^-{\alpha_{b'_1,e'_1}} _-{}
&&
\equiv
&&
{c_{n_1}c_{b_1}c_{m'_1}c_{a'_1}}
\\
\\
&&&&
{c_{d_1}c_{g_1}c_{p'_1}c_{e}}
\ar@2[uurr] ^-{\alpha_{p'_1,e}} _-{}
\ar@2[ddrr] ^-{\alpha_{g_1,p'_1}} _-{}
&&&&&&
{c_{d_1}c_{m_1}c_{m'_1}c_{a'_1}}
\ar@2[uurr] _-{\alpha_{d_1,m_1}} _-{}
\\
\\
&&
{c_{w}c_{w'}c_{t}c_{e}}
\ar@2[rr] _-{\alpha_{w',t}} _-{}
&&
{c_{w}c_{s}c_{s'}c_{e}}
\ar@2[rr] ^-{\alpha_{w,s}} _-{}
\ar@2[ddrr] _-{\alpha_{s',e}} _-{}
&&
{c_{d_1}c_{d'_1}c_{s'}c_{e}}
\ar@2[rr] ^-{\alpha_{s',e}} _-{}
&&
{c_{d_1}c_{d'_1}c_{a_1}c_{a'_1}}
\ar@2[uurr] ^-{} _-{\alpha_{d'_1,a_1}}
\\
\\
&&&&&&
{c_{w}c_{s}c_{a_1}c_{a'_1}}
\ar@2[uurr] _-{\alpha_{w,s}} _-{}
\ar@{} [uu] |-{\displaystyle \equiv}
}}
\]
and the target is the $3$-cell
\[
\scalebox{0.75}{
\xymatrix @C=0.6em @R=1em{
&&&&&&
{c_{u}c_{s_1}c_{e_1}c_{e'_1}}
\ar@{} [dd] |-{\displaystyle \equiv}
\ar@2[rr] ^-{\alpha_{u,s_1}} _-{}
&&
{c_{n_1}c_{n'_1}c_{e_1}c_{e'_1}}
\ar@2[ddrr] ^-{\alpha_{n'_1,e_1}} _-{}
\\
\\
&&&&
{c_{u}c_{s_1}c_{s'_1}c_{m'}}
\ar@2[rr] ^-{\alpha_{u,s_1}} _-{}
\ar@2[uurr] ^-{\alpha_{s'_1,m'}} _-{}
\ar@3 []!<30pt,-12pt>;[dddd]!<30pt,30pt> ^{\mathcal{X}_{u,v,m}c_{m'}} 
&&
{c_{n_1}c_{n'_1}c_{s'_1}c_{m'}}
\ar@2[uurr] _-{\alpha_{s'_1,m'}} _-{}
\ar@2[ddrr] ^-{\alpha_{n'_1,s'_1}} _-{}
\ar@3 []!<135pt,-12pt>;[dd]!<135pt,-7pt> ^{c_{n_1}\mathcal{X}_{n'_1,s'_1,m'}} 
&&&&
{c_{n_1}c_{b_1}c_{b'_1}c_{e'_1}}
\ar@2[ddddrr] ^-{\alpha_{b'_1,e'_1}} _-{}
\\
\\
&&
{c_{u}c_{v}c_{m}c_{m'}}
\ar@2[uurr] ^-{\alpha_{v,m}} _-{}
\ar@2[ddrr] ^-{\alpha_{u,v}} _-{}
&&&&&&
{c_{n_1}c_{b'}c_{n'}c_{m'}}
\ar@2[ddrr] ^-{\alpha_{n',m'}} _-{}
\\
\\
{c_{u}c_{v}c_{t}c_{e}}
\ar@2[uurr] ^-{\alpha_{t,e}} _-{}
\ar@2[ddrr] _-{\alpha_{u,v}} _-{}
&&
\equiv
&&
{c_{w}c_{w'}c_{m}c_{m'}}
\ar@2[rr] ^-{\alpha_{w',m}} _-{}
\ar@3 []!<30pt,-12pt>;[dddd]!<30pt,30pt> ^{\mathcal{X}_{w,w',t}c_e} 
&&
{c_{w}c_{n}c_{n'}c_{m'}}
\ar@2[uurr] ^-{\alpha_{w,n}} _-{}
\ar@2[ddrr] ^-{\alpha_{n',m'}} _-{}
&&
\equiv
&&
{c_{n_1}c_{b'}c_{e'}c_{a'_1}}
\ar@2[rr] ^-{\alpha_{b',e'}} _-{}
\ar@3 []!<1pt,-12pt>;[dddd]!<1pt,20pt> ^{\mathcal{X}_{w,s,a_1}c_{a'_1}} 
&&
{c_{n_1}c_{b_1}c_{m'_1}c_{a'_1}}
\\
\\
&&
{c_{w}c_{w'}c_{t}c_{e}}
\ar@2[uurr] _-{\alpha_{t,e}} _-{}
\ar@2[ddrr] _-{\alpha_{w',t}} _-{}
&&&&&&
{c_{w}c_{n}c_{e'}c_{a'_1}}
\ar@2[uurr] _-{\alpha_{w,n}} _-{}
&&&&
{c_{d_1}c_{m_1}c_{m'_1}c_{a'_1}}
\ar@2[uu] _-{\alpha_{d_1,m_1}} _-{}
\\
\\
&&&&
{c_{w}c_{s}c_{s'}c_{e}}
\ar@2[rr] _-{\alpha_{s',e}} _-{}
&&
{c_{w}c_{s}c_{a_1}c_{a'_1}}
\ar@2[rrrr] _-{\alpha_{w,s}} _-{}
\ar@2[uurr] ^-{\alpha_{s,a_1}} _-{}
&&&&
{c_{d_1}c_{d'_1}c_{a_1}c_{a'_1}}
\ar@2[uurr] _-{\alpha_{d'_1,a_1}} _-{}
}}
\]
In the generating triple confluence, some columns may be empty and thus the indicated $2$-cells $\alpha$ may be identities. To facilitate the reading of the diagram, we have omitted the context of the $2$-cells $\alpha$.
More generally, we expect that the generating $n$-cell of the resolution has the form of the permutohedron of dimension $n$.

\section{Coherence and Lakshmibai-Seshadri's paths}
\label{CoherenceLakshmibaiSeshadriPaths}

In this section, we construct a coherent presentation of the monoid~$\P_n$ in term of Lakshmibai-Seshadri's paths. After recalling the notions of paths and crystal graphs, we briefly recall in~\ref{Subsection:TableauxLSpaths} the notion of Lakshmibai-Seshadri's paths and we refer the reader to~\cite{Littelmann94,Littelmann96} for more informations. Finally, we construct in~\ref{Subsection:CoherenceOfPaths} a convergent presentation of the monoid~$\P_n$ using Yamanouchi paths and a coherent presentation of it in terms of Lakshmibai-Seshadri's paths.

\subsection{Paths and crystal graphs}
\label{Paths and crystal graphs} 

Denote by $\mathfrak{gl}_{n}$ the general linear Lie algebra of $n$ by $n$ matrices.
Consider $\mathbb{R}^{n}$ with its canonical basis~$(\epsilon_1,\ldots, \epsilon_n)$.
The set of \emph{weights} of $\mathfrak{gl}_{n}$, denoted by $X$, is the lattice $\mathbb{Z}\epsilon_1\oplus\ldots\oplus\mathbb{Z}\epsilon_n$. The \emph{simple roots} of~$\mathfrak{gl}_{n}$ are the weight $\alpha_i = \epsilon_i - \epsilon_{i+1}$, for $1\leq i \leq n$.
Its \emph{fundamental weights} are~$\omega_i=\epsilon_1+\ldots+\epsilon_i$, for $1\leq i \leq n$. We will denote by~$\mathbb{F}$  the set of the fundamental weights. The \emph{dominant weights} are of the form~$a_1\omega_1 + \ldots+a_n\omega_n$ where~$a_1\geq \ldots\geq a_n\geq 0$. A dominant weight can be also written on the following form~$p_1\epsilon_1 + \ldots+p_n\epsilon_n$, with~$p_1\geq \ldots\geq p_n\geq 0$.

\subsubsection{Paths}

We will denote by $X_{\mathbb{R}}$ the real vector space~$X\otimes_{\mathbb{Z}}\mathbb{R}$.
A \emph{path} is a piecewise linear continuous map
$\pi: [0,1]\fll X_{\mathbb{R}}$.
We will consider paths up to a reparametrization, that is, a path $\pi$ is equal to any path $\pi\circ \varphi$, where $\varphi:[0,1]\fll [0,1]$ is a piecewise linear non-decreasing surjective continuous map. The target $\pi(1)$ of a path $\pi$ is called the \emph{weight} of $\pi$ and denoted by $\textrm{wt}(\pi)$.
We denote by 
\[
\Pi= \big\{\; \pi: [0,1]\fll X_{\mathbb{R}} \; \big| \; \pi(0) = 0 \text{ and } \pi(1)\in X \; \big\}
\]
the set of all paths whose source is $0$ and weight lies in $X$. We will denote by $\theta : [0,1]\fll X_{\mathbb{R}}$ the trivial path defined by $\theta(t)=0$, for any $t\in[0,1]$.
Given two paths  $\pi_1$ and $\pi_2$ in $\Pi$, the concatenation~$\pi_1\star\pi_2$ is defined  by:
\[
\pi_1\star\pi_2(t) :=\left \{
\begin{array}{ll}
\pi_{1}(2t)  &  \text{ for } 0\leq t\leq \frac{1}{2}, \\
\pi_{1}(1)+\pi_{2}(2t-1)&   \text{ for }\frac{1}{2}\leq t\leq 1. \\
\end{array}
\right. 
\]
With the concatenation $\star$ the set $\Pi$ forms a monoid whose unity is the trivial path and called the \emph{monoid of paths}.

\subsubsection{Words and paths}
\label{WordsandPaths}

For $\lambda$ in $X_{\mathbb{R}}$, consider  the path 
$\pi_{\lambda} : [0,1]\fll X_{\mathbb{R}}$
that connects the origin with~$\lambda$ by a straight line, that is $\pi_\lambda(t)=t\lambda$, for any $t\in[0,1]$. The path $\pi_{\lambda}$ is in $\Pi$ if and only if $\lambda$ is in~$X$.

Any $1$-cell in the free monoid $X_{\mathbb{R}}^\ast$ on $X_{\mathbb{R}}$ is a finite sequence of weights.
We define a map~$X_{\mathbb{R}}^\ast \fll \Pi$
sending any $1$-cell $w=\lambda_{1} \ldots \lambda_{r}$, with $\lambda_i$ in $X_{\mathbb{R}}$, to a path $\pi_w= \pi_{\lambda_1}\star \ldots\star \pi_{\lambda_r}$. The path~$\pi_w$ is in $\Pi$ if and only if $\lambda_1+\ldots+\lambda_r$ is in $X$. In addition, if we identify every path $\pi_{\epsilon_i}$ with the integer~$i$, for $1\leq i \leq n$, then the set of paths $\{\pi_{\epsilon_i}\;|\; 1\leq i \leq n \}$ is identified with the set~$[n]$. Hence, for every $1$-cell~$w=x_1\ldots x_r$ in the free monoid $[n]^{\ast}$, with $x_i$ in $[n]$, we  associate a path $\pi_w = \pi_{\epsilon_{x_1}}\star\ldots\star \pi_{\epsilon_{x_r}}$. We will denote by~$\Pi_W$ the free monoid over~$\{\pi_{\epsilon_i}\;|\; 1\leq i \leq n \}$.

\subsubsection{Root operators}
\label{Rootoperators}

Let  $\pi_w$ be a path in~$\Pi_W$. For each $i$ in $[n]$ and each simple root $\alpha_i$  of~$\mathfrak{gl}_n$, one defines the root operators
\[
e_{\alpha_i},f_{\alpha_i} : \Pi_W \fll \Pi_W\cup\{0\}
\]
as follows.
First, one considers the path~$\pi_{w}^{i}$ obtained by deleting all the paths other that $\pi_{\epsilon_i}$ and  $\pi_{\epsilon_{i+1}}$ from~$\pi_w$. 
Second, one removes the concatenation~$\pi_{\epsilon_i}\star\pi_{\epsilon_{i+1}}$ of adjacent paths, that is \linebreak with~\mbox{$\pi_{\epsilon_i}(1)=\pi_{\epsilon_{i+1}}(0)$}. After these two operations we obtain a new path. The second step of the  process is repeated until it is impossible to remove adjacent paths. Let $r$ and $s$ be respectively the number of paths~$\pi_{\epsilon_{i+1}}$ and $\pi_{\epsilon_{i}}$  in the final path. 
\begin{itemize}
\item If $r>0$, then $e_{\alpha_i}(\pi_w)$ is obtained by replacing in $\pi_w$ the rightmost path $\pi_{\epsilon_{i+1}}$ of the final path by~$\pi_{\epsilon_i}$ and the others paths of $\pi_w$ stay unchanged. If $r = 0$, then $e_{\alpha_i}(w) = 0$.
\item If $s>0$, then $f_{\alpha_i}(\pi_w)$ is obtained by replacing in $\pi_w$ the leftmost path $\pi_{\epsilon_i}$ of  the final path by~$\pi_{\epsilon_{i+1}}$ and the others paths of $\pi_w$ stay unchanged. If $s = 0$, we set $f_{\alpha_i}(w) = 0$.
\end{itemize}
These operators preserve the length of the paths. We have also that if $f_{\alpha}(\pi)=\pi^{'}\neq 0$ then~$e_{\alpha}(\pi^{'})=\pi$. 

\begin{example}
Consider the path $\pi_w= \pi_{\epsilon_3}\star\pi_{\epsilon_1}\star  \pi_{\epsilon_2}\star  \pi_{\epsilon_2}\star  \pi_{\epsilon_1}\star  \pi_{\epsilon_3}\star \pi_{\epsilon_3}\star\pi_{\epsilon_1}\star  \pi_{\epsilon_3}$. Let us compute~$f_{\alpha_1}(\pi_w)$ and~$e_{\alpha_1}(\pi_w)$. We have~$\pi_{w}^{1}= \pi_{\epsilon_1}\star  \pi_{\epsilon_2}\star  \pi_{\epsilon_2}\star  \pi_{\epsilon_1}\star  \pi_{\epsilon_1}$. After removing the concatenation~$\pi_{\epsilon_1}\star\pi_{\epsilon_2}$ of the adjacent paths~$\pi_{\epsilon_1}$ and~$\pi_{\epsilon_2}$, we can not eliminate more paths. 
Then the final path is   $\pi_{\epsilon_2}\star  \pi_{\epsilon_1}\star  \pi_{\epsilon_1}$, with~$r=1$ and~$s=2$. Hence we obtain that
\begin{align*}
f_{\alpha_1}(\pi_w) &= \pi_{\epsilon_3}\star\pi_{\epsilon_1}\star  \pi_{\epsilon_2}\star  \pi_{\epsilon_2}\star  \pi_{\epsilon_2}\star  \pi_{\epsilon_3}\star \pi_{\epsilon_3}\star\pi_{\epsilon_1}\star  \pi_{\epsilon_3},\\
e_{\alpha_1}(\pi_w) &= \pi_{\epsilon_3}\star\pi_{\epsilon_1}\star  \pi_{\epsilon_2}\star  \pi_{\epsilon_1}\star  \pi_{\epsilon_1}\star  \pi_{\epsilon_3}\star \pi_{\epsilon_3}\star\pi_{\epsilon_1}\star  \pi_{\epsilon_3}.
\end{align*}
\end{example}

\subsubsection{Crystal graphs}

A \emph{crystal graph} is a $1$-polygraph~$\mathcal{G}$ whose set of $0$-cells is~$\Pi_{W}$ and whose set of $1$-cells is
\[
\mathcal{G}_1\; := \; \big\{\; \pi\overset{i_\pi}{\rightarrow} f_{\alpha_i}(\pi) \;\big|\; i\in\{1,\ldots,n\} \;\big\}
.\]
Note that $\pi' = f_{\alpha_i}(\pi)$ if and only if~$\pi = e_{\alpha_i}(\pi')$,~\cite{Kashiwara94}. If there is no confusion, we will denote~$i_\pi$ by~$i$. 

\subsubsection{Connected components of crystal graphs}

For any path~$\pi$ in~$\Pi_{W}$, we denote by~$B(\pi)$ the connected component of the crystal graph containing~$\pi$. Every connected component contains a path~$\pi$ that satisfies the following property:
\[
e_{\alpha_i}(\pi) = 0,
\]
for any $1\leq i \leq n$ and called a \emph{highest weight path}. We will denote by~$\Pi_{W}^{+}$ the set of highest weight paths in~$\Pi_{W}$.
An \emph{isomorphism} between two connected components~$B(\pi)$ and~$B(\pi')$ is a bijective \break map~$\psi:B(\pi)\to B(\pi')$ that satisfies the following conditions:
\begin{enumerate}[{\bf i)}]
\item it is \emph{weight-preserving}, that is~$\textrm{wt}(\pi_w) = \textrm{wt}(\psi(\pi_w))$, for all~$\pi_w$ in~$B(\pi)$,
\item for all~$\pi_w$ and~$\pi_{w'}$ in~$B(\pi)$, if there is a $1$-cell~$\pi_w\overset{i}{\rightarrow}\pi_{w'}$, then there is a $1$-cell~~$\psi(\pi_w)\overset{i}{\rightarrow}\psi(\pi_{w'})$.
\end{enumerate}
Recall that for two paths $\pi_1$ and $\pi_2$ in $\Pi_W^{+}$,~$B(\pi_1)$ and~$B(\pi_2)$ are isomorphic if and only if their highest weight paths $\pi_1$ and $\pi_2$ have the same weight,~{\cite[Theorem~1]{Littelmann96}}.

\begin{example}
\label{CrysalExample}
For~$n=3$, the connected component~$B(\pi_{\epsilon_3}\star\pi_{\epsilon_1}\star\pi_{\epsilon_3})$ containing the path~$\pi_{\epsilon_3}\star\pi_{\epsilon_1}\star\pi_{\epsilon_3}$ has the following form
\[\scalebox{0.8}{
\xymatrix@C=0.5em @R=1cm{
&& {\pi_{\epsilon_1}\star\pi_{\epsilon_1}\star\pi_{\epsilon_2}}
 	\ar@<+0.4ex>[dl] _-{1}
 	\ar@<+0.4ex>[dr]^-{2}
 \\
&{\pi_{\epsilon_2}\star\pi_{\epsilon_1}\star\pi_{\epsilon_2}}
	\ar@<+0.4ex>[d]_-{2}
&& {\pi_{\epsilon_1}\star\pi_{\epsilon_1}\star\pi_{\epsilon_3}}
\ar@<+0.4ex>[d]^-{1}
\\
&{\pi_{\epsilon_3}\star\pi_{\epsilon_1}\star\pi_{\epsilon_2}}
\ar@<+0.4ex>[d]_-{2}
&& {\pi_{\epsilon_2}\star\pi_{\epsilon_1}\star\pi_{\epsilon_3}}
\ar@<+0.4ex>[d]^-{1}
\\
&{\pi_{\epsilon_3}\star\pi_{\epsilon_1}\star\pi_{\epsilon_3}}
\ar@<+0.4ex>[dr]_-{1}
&&{\pi_{\epsilon_2}\star\pi_{\epsilon_2}\star\pi_{\epsilon_3}}
\ar@<+0.4ex>[dl]^-{2}
\\
&&{\pi_{\epsilon_3}\star\pi_{\epsilon_2}\star\pi_{\epsilon_3}}
 } 
}
\]
The highest weight path of this connected component is~$\pi_{\epsilon_1}\star\pi_{\epsilon_1}\star\pi_{\epsilon_2}$.
\end{example}

\subsection{Tableaux and Lakshmibai-Seshadri's paths}
\label{Subsection:TableauxLSpaths}

\subsubsection{Tableaux}

Let~$\lambda=p_1\epsilon_1+ \ldots+p_k\epsilon_k$ be a dominant weight.  A \emph{Young tableau of shape $\lambda$} is a collection of boxes in left-justified rows filled by elements in~$[n]$ strictly increasing in the columns, such that the $i$th row contains $p_i$ boxes, for~$1\leq i \leq k$. 
For instance, a Young tableau of shape~$\lambda = 4\epsilon_1 + 3\epsilon_2 + \epsilon_3$ is the following diagram
\[\small
\young(1211,233,3)
\]
A \emph{tableau of shape}~$\lambda$, or \emph{tableau} for short, is a Young tableau of shape~$\lambda$ where the entries are non-decreasing in the rows. 
For example, a tableau of shape~$\lambda = 4\epsilon_1 + 3\epsilon_2 + \epsilon_3$ is the following diagram
\[\small
\raisebox{2.5ex}{\tableau{T} \;=\;}  \young(1112,233,3)
\]
The \emph{Japanese reading} of a tableau~$\tableau{T}$, denoted by~$\textrm{J}(\tableau T)$, is the $1$-cell obtained by reading the tableau~$\tableau T$ column-wise from top to bottom and from right to left. We will denote by~$\pi_{\textrm{J}(\tableau T)}$ the path in~$\Pi_{W}$ corresponding to the $1$-cell~$\textrm{J}(\tableau T)$, as presented  in~\ref{WordsandPaths}.  
For example, the Japanese reading of the previous tableau~$\tableau T$ is~$\textrm{J}(\tableau T) = 21313123$ and its corresponding path in~$\Pi_{W}$ is~$\pi_{\textrm{J}(\tableau T)} = \pi_{\epsilon_2}\star\pi_{\epsilon_1}\star\pi_{\epsilon_3}\star\pi_{\epsilon_1}\star\pi_{\epsilon_3}\star\pi_{\epsilon_1}\star\pi_{\epsilon_2}\star\pi_{\epsilon_3}$.

\subsubsection{Lakshmibai-Seshadri's paths}

By definition, a tableau of shape~$\omega_i$, for~$1\leq i \leq n$,   consists of one column with~$i$ elements satisfying~$x_1<\ldots<x_i$ from top to bottom. For each tableau of shape~$\omega_i$,  we will associate the path~$t\mapsto t(\epsilon_{x_1}+\ldots+\epsilon_{x_i})$ that connects the origin with the weight~$\epsilon_{x_1}+\ldots+\epsilon_{x_i}$ by a straight line. In this way, every column of a tableau will be represented by a path.
For a fundamental weight~$\omega_i$, the  \emph{Lakshmibai-Seshadri paths}, or \emph{L-S paths} for short, of shape~$\omega_i$ are the paths obtained from  all the columns of length~$i$.
 
\begin{example}
For~$n=3$, let us compute the L-S paths of shape~$\omega_1,\omega_2$ and~$\omega_3$. The only three columns of length~$1$ contains respectively the elements~$1$,~$2$ and~$3$, then the L-S paths of shape~$\omega_1$ are~$\pi_{\epsilon_1},\pi_{\epsilon_2}$ and~$\pi_{\epsilon_3}$.
The columns of length~$2$ are 
\[
\small
 \young(1,2)\;,\;\;\;\;\young(1,3)\;\;\;\; \text{ and } \;\;\;\;\young(2,3)
. \] 
Hence the L-S paths of~$\omega_2$ are the paths~$\pi_{\epsilon_1+\epsilon_2}$,~$\pi_{\epsilon_1+\epsilon_3}$ and~$\pi_{\epsilon_2+\epsilon_3}$.
 Moreover, the only column of length~$3$ is
 \[
\small \young(1,2,3) 
\] 
 Hence the only L-S path of shape~$\omega_3$ is~$\pi_{\epsilon_1+\epsilon_2+\epsilon_3}$.
\end{example} 
 
\subsubsection{Tableaux and L-S paths}

An \emph{L-S monomial of shape} $(\omega_1,\ldots, \omega_k)$ is a concatenation~$\pi_{1}\star\ldots\star\pi_{k}$,
where the path~$\pi_{i}$ is an L-S path of shape $\omega_{i}$, for every $1\leq i \leq k$.
A Young tableau of shape \linebreak $\lambda= a_{1}\omega_{1}+\ldots+a_{n}\omega_{n}$ is represented by the L-S monomial 
\[
\underset{1 \leq i \leq n}{\bigstar}{\pi_{1,\omega_i}\star\ldots\star\pi_{a_i,\omega_i}}
\]
where $\pi_{i,\omega_i}$ is an L-S path of shape $\omega_i$. That is, the first $a_{1}$ paths are of shape $\omega_{1}$, the next $a_{2}$ paths are of shape $\omega_{2}$ and so on until the final $a_{n}$ paths are of shape $\omega_{n}$. 
In the sequel, if there is no confusion we will identify Young tableaux with their corresponding L-S monomials.

\begin{example}
For $n=3$, the L-S monomial $\pi_{\epsilon_1}\star\pi_{\epsilon_1+\epsilon_3}\star\pi_{\epsilon_2+\epsilon_3}\star \pi_{\epsilon_1+\epsilon_2+\epsilon_3}$ of shape $\omega_1+2\omega_2+\omega_3$ corresponds to the following Young tableau
\[\small
\young(1211,233,3)
\]
The Japanese reading of this tableau is represented by the path~$\pi_{\epsilon_1}\star\pi_{\epsilon_1}\star\pi_{\epsilon_3}\star\pi_{\epsilon_2}\star\pi_{\epsilon_3}\star\pi_{\epsilon_1}\star\pi_{\epsilon_2}\star\pi_{\epsilon_3}$ in~$\Pi_{W}$. 
\end{example}

\subsubsection{Tableaux and crystal graphs}
\label{TableauxCrystal}

Let~$\tableau T$ be a Young tableau of shape~$\lambda$. 
To compute the root operators on~$\tableau T$, it is sufficient to compute them on the path~$\pi_{\textrm{J(\tableau T)}}$ and then to transform the resulted paths on  Young tableaux.
For example, to compute the operator~$f_{\alpha_1}$ on the tableau
\[
\small
\raisebox{1.5ex}{{\tableau{T}}\;=\;} \young(12,3)
\]
it is sufficient to compute~$f_{\alpha_1}(\pi_{\textrm{J(\tableau{T})}})$.  We have~$f_{\alpha_1}(\pi_{\textrm{J(\tableau{T})}}) = f_{\alpha_1}(\pi_{\epsilon_2}\star\pi_{\epsilon_1}\star\pi_{\epsilon_3}) = \pi_{\epsilon_2}\star\pi_{\epsilon_2}\star\pi_{\epsilon_3}$ and the path~$\pi_{\epsilon_2}\star\pi_{\epsilon_2}\star\pi_{\epsilon_3}$ corresponds to the following tableau
 \[
\small
\raisebox{1.5ex}{\tableau{T'}\;=\;} \young(22,3)
\]
Hence~$f_{\alpha_1}(\tableau{T}) = \tableau{T'}$.
We will denote by~$B(\tableau{T})$ the connected component of the crystal graph containing a Young tableau~$\tableau{T}$. Note that a tableau of shape~$\omega_1+\ldots+\omega_k$ is a vertex of the connected component~$B(\pi_{\omega_{1}}\star \ldots\star \pi_{\omega_{k}})$. Moreover, the  highest weight tableau of $B(\pi_{\omega_{1}}\star \ldots\star \pi_{\omega_{k}})$ has only $i$'s in the $i$-th row, for~\mbox{$1\leq i \leq k$}. In particular, the L-S paths of shape~$\omega_i$, for~$i$ in~$[n]$, are the vertices of the connected component~$B(\pi_{\omega_i})$.

\begin{example}
For~$n=3$, the L-S monomial $\tableau{T}=\pi_{\epsilon_2}\star\pi_{\epsilon_1+\epsilon_3}\star\pi_{\epsilon_1+\epsilon_3}\star \pi_{\epsilon_1+\epsilon_2+\epsilon_3}$ of \linebreak  shape~$\omega_1+2\omega_2+\omega_3$ corresponds to the following tableau
\[\small
\young(1112,233,3)
\]
The path $\pi_{\epsilon_2}\star\pi_{\epsilon_1+\epsilon_3}\star\pi_{\epsilon_1+\epsilon_3}\star \pi_{\epsilon_1+\epsilon_2+\epsilon_3}$ is a vertex of $B(\pi_{\omega_1}\star\pi_{\omega_2}\star\pi_{\omega_2}\star\pi_{\omega_3}),$ 
 with
\[\pi_{\epsilon_2}\star\pi_{\epsilon_1+\epsilon_3}\star\pi_{\epsilon_1+\epsilon_3}\star \pi_{\epsilon_1+\epsilon_2+\epsilon_3} = f_{\alpha_2}(f_{\alpha_1}(f_{\alpha_2}(\pi_{\omega_1}\star\pi_{\omega_2}\star\pi_{\omega_2}\star\pi_{\omega_3}))),\]
where the path $\pi_{\omega_1}\star\pi_{\omega_2}\star\pi_{\omega_2}\star\pi_{\omega_3}$ corresponds to the following tableau
\[\small
\young(1111,222,3)
\]
The tableau~$\tableau{T}$ is represented by the path~$\pi_{\textrm{J}(\tableau{T})}=\pi_{\epsilon_2}\star\pi_{\epsilon_1}\star\pi_{\epsilon_3}\star\pi_{\epsilon_1}\star\pi_{\epsilon_3}\star\pi_{\epsilon_1}\star\pi_{\epsilon_2}\star\pi_{\epsilon_3}$ in~$\Pi_{W}$.
\end{example}

\begin{example}
For~$n=3$, the tableaux of shape $\omega_1+\omega_2$ on the set $[3]$ are the vertices of the following connected component~$B(\pi_{\omega_1}\star\pi_{\omega_2})$
\[\scalebox{0.8}{
\xymatrix@C=0.01em @R=1.2cm{
&& {\pi_{\omega_1}\star\pi_{\omega_2} = \raisebox{-0.3cm}{\young(11,2)}}
 	\ar@<+0.4ex>[dl] _-{1}
 	\ar@<+0.4ex>[dr]^-{2}
 \\
&{\pi_{\epsilon_2}\star\pi_{\epsilon_1+\epsilon_2} = \raisebox{-0.3cm}{\young(12,2)}}
	\ar@<+0.4ex>[d]_-{2}
&& {\pi_{\epsilon_1}\star\pi_{\epsilon_1+\epsilon_3} =\raisebox{-0.3cm}{\young(11,3)}}
\ar@<+0.4ex>[d]^-{1}
\\
&{\pi_{\epsilon_3}\star\pi_{\epsilon_1+\epsilon_2} = \raisebox{-0.3cm}{\young(13,2)}}
\ar@<+0.4ex>[d]_-{2}
&& {\pi_{\epsilon_2}\star\pi_{\epsilon_1+\epsilon_2} =\raisebox{-0.3cm}{\young(12,3)}}
\ar@<+0.4ex>[d]^-{1}
\\
&{ \pi_{\epsilon_3}\star\pi_{\epsilon_1+\epsilon_3} =\raisebox{-0.3cm}{\young(13,3)}}
\ar@<+0.4ex>[dr]_-{1}
&&{ \pi_{\epsilon_2}\star\pi_{\epsilon_2+\epsilon_3} =\raisebox{-0.3cm}{\young(22,3)}}
\ar@<+0.4ex>[dl]^-{2}
\\
&&{\pi_{\epsilon_3}\star\pi_{\epsilon_2+\epsilon_3} =\raisebox{-0.3cm}{\young(23,3)}}
 } 
}
\]
The paths corresponding to the Japanese readings of the vertices of this connected component are the vertices of the connected component of Example~\ref{CrysalExample}.
\end{example}

\subsubsection{Yamanouchi path tableau}

A \emph{Yamanouchi path} is a path~$\pi$ in~$\Pi_{W}$ such that any of its left factor path~$\pi'$  satisfies 
\[
|\pi'|_{1}\geq\ldots \geq |\pi'|_{n}
\]
where~$|\pi'|_{i}$ denotes the number of occurrences of the path~$\pi_{\epsilon_i}$ in~$\pi'$. A path is a Yamanouchi path if and only if it is  a highest weight path,~{\cite[Proposition~2.6.1]{Shimozono05}}. As a consequence, all the paths of~$\Pi_{W}^{+}$ are Yamanouchi paths.
As we have seen previously, the highest weight tableau of the connected component containing tableaux of shape~$a_1\omega_1+\ldots+a_k\omega_k$ has only $i$'s in the $i$-th row for~$1\leq i\leq k$. Then this highest weight tableau is represented by the following Yamanouchi path
\[
\underbrace{(\pi_{\epsilon_{1}}\star\ldots\star\pi_{\epsilon_1})}_\textrm{$a_1$ times }\star\underbrace{(\pi_{\epsilon_1}\star\pi_{\epsilon_{2}})\star\ldots\star (\pi_{\epsilon_1}\star\pi_{\epsilon_{2}})}_\textrm{$a_2$ times }\star\ldots\star
\underbrace{(\pi_{\epsilon_1}\star\ldots\star\pi_{\epsilon_{k}})\star\ldots\star (\pi_{\epsilon_1}\star\ldots\star\pi_{\epsilon_{k}})}_\textrm{$a_k$ times}.
\]
A Yamanouchi path that represents a tableau is called a \emph{Yamanouchi path tableau}. Yamanouchi paths form a single plactic class whose representative path is a unique Yamanouchi path tableau,~{\cite[Lemma~5.4.7]{Lothaire02}}.

\subsubsection{Yamanouchi's map}

Let us define a map
\[
Y:\Pi_{W}^{+} \to\Pi_{W}^{+}
\] 
that transforms a non-Yamanouchi path tableau to a Yamanouchi path tableau as follows. Let~$\pi_w$ be a non-Yamanouchi path tableau, then~$Y(\pi_w)$ is equal to the path~$\pi_{\textrm{J(\tableau{T})}}$, where~$\tableau{T}$ is the tableau obtained from~$\pi_w$ by putting for every~$\pi_{\epsilon_i}$ in~$\pi_w$ an element~$i$ in the~$i$-th row of~$\tableau{T}$.

\begin{example}
For~$n=3$, the path~$\pi_w=\pi_{\epsilon_1}\star\pi_{\epsilon_1}\star\pi_{\epsilon_2}\star\pi_{\epsilon_3}\star\pi_{\epsilon_1}\star\pi_{\epsilon_2}\star\pi_{\epsilon_1}\star\pi_{\epsilon_2}\star\pi_{\epsilon_3}$ is a Yamanouchi path that is not a Yanamouchi path tableau. Moreover, this path can be transformed to the following tableau
\[
\small
\raisebox{3,5ex}{\tableau{T}\;=\;}\young(1111,222,33)
\]
after replacing each~$\pi_{\epsilon_1}$ in~$\pi_w$ by the element~$1$ in the first row of~$\tableau{T}$, each~$\pi_{\epsilon_2}$ in~$\pi_w$ by the element~$2$ in its second row and each~$\pi_{\epsilon_3}$ in~$\pi_w$ by the element~$3$ in its third row. Hence we obtain
\[
Y(\pi_w) = \pi_{\textrm{J(\tableau{T})}} = \pi_{\epsilon_1}\star\pi_{\epsilon_1}\star\pi_{\epsilon_2}\star\pi_{\epsilon_1}\star\pi_{\epsilon_2}\star\pi_{\epsilon_3}\star\pi_{\epsilon_1}\star\pi_{\epsilon_2}\star\pi_{\epsilon_3}.
\]
\end{example}

\subsection{Coherence of paths}
\label{Subsection:CoherenceOfPaths} 

In the free monoid~$\Pi_W$ over the set~$\{\pi_{\epsilon_i}\;|\; 1\leq i \leq n \}$, the  Knuth relations~(\ref{KnuthRelations}) can  be written  in the following form
\begin{equation}
\label{KnuthRelationsCrys}
\begin{array}{rl}
\big\{ \pi_{\epsilon_z}\star\pi_{\epsilon_x}\star\pi_{\epsilon_y} \odfll{\eta_{x,y,z}^{\pi}}& \pi_{\epsilon_x}\star\pi_{\epsilon_z}\star\pi_{\epsilon_y} \;\big|\;  1 \leq x\leq y<z  \leq n \big\}\\
&\;\cup\;
\big\{ \pi_{\epsilon_y}\star\pi_{\epsilon_z}\star\pi_{\epsilon_x}\odfll{\epsilon_{x,y,z}^{\pi}}\pi_{\epsilon_y}\star\pi_{\epsilon_x}\star\pi_{\epsilon_z} \;\big|\;  1 \leq x< y\leq z \leq n \big\}.
\end{array}
\end{equation}
We denote by $\Knuthpath_2(n)$ the $2$-polygraph whose set of $1$-cells is~$\{\pi_{\epsilon_i}\;|\; 1\leq i \leq n \}$ and whose set of $2$-cells is given by~(\ref{KnuthRelationsCrys}). The $2$-polygraphs~$\Knuthpath_2(n)$ and~$\Knuth_2(n)$ are Tietze equivalent, by the mapping~$i\mapsto\pi_{\epsilon_i}$ that induces an isomorphism between the two presented monoids.

\subsubsection{Equivalence on paths}
\label{Equivalenceonpaths}

Let $\pi_w$ and $\pi_{w'}$ be two paths in $\Pi_W$. One can define a relation~$\sim_{\textrm{path(n)}}$ on~$\Pi_W$ by : $\pi_w\sim_{\textrm{path(n)}} \pi_{w'}$ if, and only if, the two following conditions are satisfied:
\begin{enumerate}[{\bf i)}]
\item the connected components $B(\pi_w)$ and $B(\pi_{w'})$ are isomorphic, that is $\textrm{wt}(\pi_{w}^{+}) = \textrm{wt}(\pi_{w'}^{+})$, where $\pi_{w}^{+}$ and $\pi_{w'}^{+}$ are the highest weight  paths of $B(\pi_w)$ and $B(\pi_w')$.
\item $\pi_w$ and $\pi_{w'}$ have the same position in the components $B(\pi_w)$ and $B(\pi_{w'})$, that is, there exist~$i_{1},\ldots, i_{r}$ such that~$\pi_w=f_{\alpha_{i_1}}\cdot\cdot\cdot f_{\alpha_{i_r}}(\pi_{w}^{+})$ and~$\pi_{w'}=f_{\alpha_{i_1}}\cdot\cdot\cdot f_{\alpha_{i_r}}(\pi_{w'}^{+})$.
\end{enumerate}

\subsubsection{$2$-polygraph of crystals}
\label{2polygraphofcrystals}

Let~$\Crys^{0}_2(n)$ be the $2$-polygraph whose set of $1$-cells is~\mbox{$\{\pi_{\epsilon_i}\;|\; 1\leq i \leq n \}$} and whose set of $2$-cells is
\[
\big\{\; \pi_{w} \odfll{\vartheta_{\pi_w}} Y(\pi_{w}) \; \big| \; \pi_{w}\in \Pi_{W}^{+}
\;\;\text{and}\; \textrm{wt}(\pi_{w}) = \textrm{wt}(Y(\pi_{w})) \; \big\}
.
\]
For~$\pi_w$ in $\Pi_w^{+}$, the path~$f_{\alpha_{j_{k}}}\circ f_{\alpha_{j_{k-1}}}\circ\ldots\circ f_{\alpha_{j_{1}}}(\pi_w)$ will be also denoted by~$f_{\alpha_{j_{k}}}(\pi_w)$,
where for~\mbox{$i=1,\ldots,k$}, every~$j_i$ is an element of~$[n]$,~$\alpha_{j_i}$ is a simple root and~$f_{\alpha_{j_i}}$ is the corresponding root operator.
For~$k\geq 0$, let us define the $2$-polygraph~$\Crys^{k}_2(n)$ whose set of $1$-cells is~$\{\pi_{\epsilon_i}\;|\; 1\leq i \leq n \}$ and whose set of $2$-cells is
\[
\big\{\;f_{\alpha_{j_{k}}}( \pi_{w}) \odfll{\vartheta_{\pi_w}^{\alpha_{j_{k}}}} f_{\alpha_{j_{k}}}(Y(\pi_{w})) \; \big| \; \pi_{w}\in \Pi_{W}^{+}
\;\;\text{and}\; \textrm{wt}(\pi_{w}) = \textrm{wt}(Y(\pi_{w})) \; \big\}
.
\]

The \emph{$2$-polygraph of crystals} is the $2$-polygraph denoted by~$\Crys_2(n)$, whose $1$-cells are $\pi_{\epsilon_1},\ldots,\pi_{\epsilon_n}$ and whose set of $2$-cells is
\[
\underset{i\geq 0}{\cup}\Crys^{i}_2(n)
.
\]
By construction, the monoid presented by the $2$-polygraph~$\Crys_2(n)$ is isomorphic to the quotient of~$\Pi_W$ by the equivalence~$\sim_{\mathrm{path(n)}}$. 

\begin{theorem}
\label{Crystaltheorem}
For $n>0$, the $2$-polygraph~$\Crys_2(n)$ is a convergent presentation of the monoid~$\P_n$.
\end{theorem}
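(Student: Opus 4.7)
The plan is to prove three things: that the monoid presented by $\Crys_2(n)$ is isomorphic to $\P_n$, that $\Crys_2(n)$ terminates, and that it is confluent.

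For the presentation claim, I note that by construction the monoid presented by $\Crys_2(n)$ is $\Pi_W/\sim_{\mathrm{path}(n)}$, and under the identification $\pi_{\epsilon_i}\leftrightarrow i$ this becomes $[n]^*/\sim_{\mathrm{path}(n)}$. It therefore suffices to check that on $[n]^*$ the equivalence $\sim_{\mathrm{path}(n)}$ coincides with the plactic congruence $\sim_{\mathrm{plax}(n)}$. One inclusion comes from a direct verification that each oriented Knuth relation in~(\ref{KnuthRelationsCrys}) is realised by a $2$-cell of $\Crys_2(n)$: apply the Yamanouchi map $Y$ to length-three Yamanouchi paths and propagate through root operators. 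The reverse inclusion is the content of the path-model correspondence~\cite{Littelmann96,Shimozono05}: the pair (crystal component up to isomorphism, position inside it) reconstructs the Schensted tableau, which is the complete invariant of the plactic class.

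For convergence, the key structural observation is that the target of every rule $\vartheta_{\pi_w}^{\alpha_{j_k}}$ is $f_{\alpha_{j_k}}(Y(\pi_w))$, the image under root operators of a Yamanouchi path tableau; by the discussion in~\ref{TableauxCrystal}, such an image is itself a tableau. I will then argue that tableau paths are exactly the normal forms: no sub-path of a tableau can match the source of a rule, since such a source must lie in the crystal component of a non-tableau Yamanouchi path, which is incompatible with the rigidity of tableau components. Combined with the uniqueness of the tableau representative in each plactic class~\cite[Theorem~5.2.5]{Lothaire02}, this yields the unique normal form property. Termination I will then obtain by observing that every rewriting step preserves path length, so rewriting lives inside the finite set of length-$k$ paths in $\Pi_W$, and by exhibiting a well-founded measure~$\prec$ on those paths (for instance a lexicographic distance from the tableau representative within the $\sim_{\mathrm{path}(n)}$-class) that strictly decreases under every contextual rule application. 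Newman's lemma~\cite[Theorem~3.1.6]{GuiraudMalbos14} then upgrades termination plus the unique normal form property to confluence.

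The main obstacle will be the combinatorics of the normal form characterisation together with the design of the termination measure: verifying that no sub-path of a tableau matches a rule source, and that a local rewrite strictly decreases $\prec$ even when the substitution interacts with neighbouring letters at the seam. A cleaner alternative that I would keep in reserve is to simulate each $\Crys_2(n)$-reduction by a bounded sequence of $\Colo_2(n)$-reductions using Proposition~\ref{Proposition:ColumnPresentation}; termination of $\Crys_2(n)$ then follows from that of $\Colo_2(n)$, and the simulation also provides an independent route to the unique normal form property and hence to confluence.
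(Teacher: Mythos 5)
Your overall architecture --- (i) identify $\Pi_W$ with $[n]^\ast$ and invoke the coincidence of $\sim_{\mathrm{path}(n)}$ with $\sim_{\mathrm{plax}(n)}$, (ii) prove termination by a decreasing measure, (iii) deduce confluence from termination together with the uniqueness of the tableau representative in each plactic class --- is the paper's. The presentation step and the confluence step are essentially as in the paper, up to one slip: what you need is not Newman's lemma but the direct implication ``termination plus unique normal forms implies confluence''; Newman's lemma requires local confluence, which you have not established independently.

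The genuine gap is the termination measure, which is exactly where the paper does its work and which you leave as a placeholder. The paper's argument is concrete: every path decomposes uniquely as an L-S monomial of maximal shape $(\omega_{j_1},\ldots,\omega_{j_k})$; since root operators preserve lengths and shapes one may restrict attention to Yamanouchi representatives; and the Yamanouchi map $Y$ strictly decreases the shape sequence for the lexicographic order $\ordrenweight$ induced by $\omega_1\ordreweight\cdots\ordreweight\omega_n$. Your candidate measure (``lexicographic distance from the tableau representative within the $\sim_{\mathrm{path}(n)}$-class'') is not defined, and its compatibility with contexts is precisely the hard point: rewriting a factor changes the crystal component of the ambient word, so the representative of the class of the whole word is not controlled by the local rule. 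Your fallback via simulation in $\Colo_2(n)$ does not obviously repair this: the two polygraphs live over different alphabets (letters versus column generators, Japanese versus column readings), and transferring termination requires every $\Crys_2(n)$-step to be sent to a \emph{non-empty} $\Colo_2(n)$-reduction, which you would still have to prove. Finally, your characterisation of the normal forms as exactly the tableau readings needs the factor analysis you yourself flag as the main obstacle: rule sources are whole words $f_{\alpha_{j_k}}(\pi_w)$ with $\pi_w$ a non-tableau Yamanouchi path, and short such sources (already $\pi_{\epsilon_1}\star\pi_{\epsilon_2}\star\pi_{\epsilon_1}$ in rank $3$, together with its images under root operators) can occur as factors of longer readings, so irreducibility of tableaux is not automatic. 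The paper sidesteps this by arguing directly that any two normal forms of a given path are equivalent tableau representatives of the same plactic class, rather than by characterising the irreducible words; without either that argument or the factor analysis, your convergence proof is incomplete.
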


\begin{proof}
Thanks to the mapping~$i\mapsto \pi_{\epsilon_i}$, for any~$i$ in~$[n]$, the free monoid~$\Pi_W$ is identified to the monoid~$[n]^{\ast}$. By~\cite{DateJimboMiwa90,LascouxLeclercThibon95}, the equivalence~$\sim_{\text{plax(n)}}$ defined in~\ref{Placticcongruence} coincides with the equivalence~$\sim_{\text{path(n)}}$, taking into account that the column reading of Schensted's tableaux obtained by the row insertion~(\ref{Algorithm:Schensted}) is replaced by the Japanese reading of Schensted's tableaux which are obtained by a similar column insertion,~\cite{Schensted61}. Thus, the monoid~$\P_n$ is isomorphic to the quotient of~$\Pi_W$ by the equivalence~$\sim_{\textrm{path(n)}}$. Hence, the $2$-polygraph~$\Crys_2(n)$ is a  presentation of the monoid~$\P_n$.

Prove the convergence of the $2$-polygraph~$\Crys_2(n)$. The termination is proved by showing that~$\Crys_2(n)$ is compatible with  a total order~$\ordrenweight$ defined on the set~$\mathbb{F}^{n}$ as follows.
First, we fix an ordering~$\ordreweight$ on the set of fundamental weights~$\mathbb{F}$ of the Lie algebra~$\mathfrak{gl}_n$ by
\[
\omega_1 \ordreweight\omega_2\ordreweight\ldots\ordreweight\omega_n.
\] 
Let~$\ordrenweight$ be the lexicographic order on the set~$\mathbb{F}^{n}$ induced by the order~$\ordreweight$, that is,\linebreak $(\omega_{i_1},\ldots,\omega_{i_n})\ordrenweight (\omega_{i'_{1}},\cdots,\omega_{i'_{n}})$ if
\[
\omega_{i_1} \ordreweight \omega_{i'_{1}} \; \text{ or } \; [\omega_{i_1} = \omega_{i'_{1}} \; \text{ and } \; (\omega_{i_2},\ldots,\omega_{i_n})\ordrenweight (\omega_{i'_{2}},\ldots,\omega_{i'_{n}})],
\]
where for every~$1\leq k\leq n$,~$\omega_{i_k}$ and~$\omega_{i'_k}$ are fundamental weights in~$\mathbb{F}$. Then~$\ordrenweight$ is a well-ordering on the set~$\mathbb{F}^{n}$.
Since the root operators preserve the lengths of paths and the shapes of tableaux, we will suppose that all the paths are Yamanouchi paths. Note also that any path in~$\Pi_W$ has a unique decomposition as an L-S monomial~$\pi_1\star\ldots\star\pi_k$ of shape~$(\omega_{j_1},\ldots,\omega_{j_k})$, where the path~$\pi_i$ is an L-S path of maximal shape~$\omega_{j_i}$, for every~$1\leq i \leq k$ and~$1\leq j_i \leq n$. In this way, we will consider this unique decomposition for all the Yamanouchi paths.  
By construction of the Yamanouchi map~$Y$, every non-Yamanouchi  L-S monomial tableau is transformed to a Yamanouchi path tableau by beginning with the concatenation of its L-S paths of shape~$\omega_1$, after by the concatenation of its paths of shape~$\omega_2$ and so on until the concatenation of its L-S paths of maximal shape with respect to the order~$\ordreweight$. Then, for every $2$-cell~$\vartheta_{\pi_w}:\pi_{w} \dfl Y(\pi_{w})$ in~$\Crys_2(n)$, we have~$Y(\pi_{w})\ordernweight \pi_{w}$. Hence, the $2$-polygraph~$\Crys_2(n)$ is compatible with the order~$\ordrenweight$. Hence, rewriting an L-S monomial that is not a Yamanouchi path tableau always decreases it with respect to the order $\ordrenweight$.  Since every application of a $2$-cell in~$\Crys_2(n)$ yields a $\ordrenweight$-preceding L-S monomial, it follows that any sequence of rewriting using~$\Crys_2(n)$ must terminate.

Let us show that $\Crys_2(n)$  is confluent. Let~$\pi_w$ be a path in~$\Pi_{W}$ and  $\pi_{w'}$, $\pi_{w"}$ be two normal forms such that  $\pi_w\dfl \pi_{w'}$ and $\pi_w\dfl \pi_{w"}$. It is sufficient to prove that $\pi_{w'}=\pi_{w"}$. We have that~$\pi_{w'}$ is a Yamanouchi path tableau such that~$\pi_w\sim_{\textrm{path(n)}}\pi_{w'}$. 
Similarly, the path~$\pi_{w"}$ is a Yamanouchi path tableau such that~$\pi_w\sim_{\textrm{path(n)}}\pi_{w"}$. 
Since~$\pi_w\sim_{\textrm{path(n)}}\pi_{w'}\sim_{\textrm{path(n)}}\pi_{w"}$  and each plactic congruence contains exactly one Yamanouchi path tableau, we obtain that~$\pi_{w'}=\pi_{w"}$. Since the $2$-polygraph~$\Crys_2(n)$ is terminating, and rewriting any non-Yamanouchi path tableau must terminate with a unique normal form,~$\Crys_2(n)$ is confluent.
\end{proof}

As a consequence, we obtain that the $2$-polygraphs~$\Knuthpath_2(n)$ and~$\Crys_2(n)$ are Tietze equivalent.

\subsubsection{$2$-polygraph of paths}
\label{2polygraphofpaths}

Let denote by $\Path_1(n)$ the $1$-polygraph with only one $0$-cell and whose $1$-cells are all L-S paths of shape~$\omega_1,\ldots, w_n$.
For each pair $(\pi_u, \pi_{v})$ in $\Path_1(n)$  such that $\pi_u\star\pi_{v}$ is not a tableau, we define the $2$-cell
\[
\alpha_{\pi_u,\pi_{v}} : \pi_u\star\pi_{v}\dfl \pi_w\star\pi_{w'},
\]
where $\pi_w\star\pi_{w'}$ is the unique tableau such that $\pi_u\star\pi_{v}\sim_{\textrm{path(n)}} \pi_w\star\pi_{w'}$.
The \emph{$2$-polygraph of paths}, denoted by $\Path_2(n)$, is the  $1$-polygraph $\Path_1(n)$ extended by the set of $2$-cells  $\alpha_{\pi_u,\pi_{v}}$, where  $\pi_u$ and $\pi_{v}$ are in~$\Path_1(n)$  such that $\pi_u\star\pi_{v}$ is not a tableau. As shown by Littelmann, the $2$-polygraph~$\Path_2(n)$ is a presentation of the monoid~$\P_n$,~{\cite[Theorem~B]{Littelmann96}}.  Indeed, Littelmann showed that any L-S monomial is equivalent modulo relations in~$\Path_2(n)$ to a tableau. Note that we can also prove that the $2$-polygraph is a presentation of~$\P_n$ as follows. For every L-S path~$m$ in~$\Path_1(n)$, we consider its column reading~$C(m)$ as defined in Section~\ref{placticmonoids}. By the following composite of mappings
\[
\begin{array}{cccc}
\Path_1(n)&\longrightarrow\Pi_{W}&\longrightarrow\Knuth_1(n)^{\ast}&\longrightarrow\Colo_1(n)\\
m\quad&\longmapsto \pi_{C(m)}&\longmapsto C(m)\quad &\longmapsto c_{C(m)}\quad
\end{array}
\]
we transform each L-S path in~$\Path_1(n)$ into an element of~$\Colo_1(n)$.
Thus, the set~$\Path_1(n)$ is identified to the set~$\Colo_1(n)$. Similarly,  we transform through the previous mapping the left and right hands of the $2$-cells of~$\Path_2(n)$ into elements in~$\Colo_1(n)^{\ast}$. In this way, we identify the sets~$\Colo_2(n)$ and~$\Path_2(n)$. Hence the $2$-polygraph~$\Path_2(n)$ is Tietze equivalent to~$\Colo_2(n)$.

\subsubsection{Path coherent presentation}
\label{Pathcoherentpresentation}

Let us denote by~$\Path_3(n)$ the extended presentation of the monoid~$\P_n$ obtained from~$\Path_2(n)$ by adjunction of the following $3$-cell 
\[
\xymatrix @!C @C=2.3em @R=1em {
&
\pi_{e}\star \pi_{e'}\star\pi_t
  \ar@2[r] ^{\pi_{e}\alpha_{\pi_{e'},\pi_t}}
     \ar@3 []!<45pt,-8pt>;[dd]!<45pt,8pt> ^{}
&
\pi_{e}\star \pi_{b}\star \pi_{b'}
  \ar@2[dr] ^{\alpha_{\pi_e,\pi_b}\pi_{b'}}
\\
\pi_u\star \pi_v\star \pi_t
  \ar@2[ur] ^{\alpha_{\pi_u,\pi_v}\pi_t}
  \ar@2[dr] _{\pi_u\alpha_{\pi_v,\pi_t}}
&&&
\pi_{a}\star\pi_{d}\star \pi_{b'}
\\
&
\pi_u\star\pi_{w}\star\pi_{w'}
  \ar@2[r] _{\alpha_{\pi_u,\pi_w}\pi_{w'}}
&
\pi_{a}\star \pi_{a'}\star\pi_{w'}
 \ar@2[ur] _{\pi_{a}\alpha_{\pi_{a'},\pi_{w'}}}
}	
\]
where the paths $\pi_u\star\pi_v$ and~$\pi_v\star\pi_t$ are not tableaux and the paths $\pi_e\star \pi_{e'}$, \; $\pi_w\star \pi_{w'}$,\; 
$\pi_a\star \pi_{a'}$,\; $\pi_b\star \pi_{b'}$ and  $\pi_a\star\pi_d\star\pi_{b'}$ are tableaux such that
\[
\pi_u\star\pi_v \sim_{\textrm{path(n)}} \pi_e\star \pi_{e'},\;
\pi_v\star\pi_t \sim_{\textrm{path(n)}} \pi_w\star \pi_{w'},\;
\pi_u\star\pi_w \sim_{\textrm{path(n)}} \pi_a\star \pi_{a'},\;
\pi_{e'}\star\pi_t \sim_{\textrm{path(n)}} \pi_b\star \pi_{b'},\]
\[\pi_e\star\pi_b \sim_{\textrm{path(n)}} \pi_a\star \pi_d,\;
\pi_{a'}\star\pi_{w'} \sim_{\textrm{path(n)}} \pi_d\star \pi_{b'},
\; \text{ and }\;
\pi_u\star\pi_v\star\pi_t \sim_{\textrm{path(n)}}\pi_a\star\pi_d\star\pi_{b'}
.
\]
The $2$-polygraphs~$\Colo_2(n)$ and~$\Path_2(n)$ have the same properties. In particular, they have the same critical branchings and the same confluence diagrams. Hence, we obtain the following result as a direct consequence of Theorem~\ref{MainTheoremA}.

\begin{corollary}
\label{Path2Theorem}
For $n>0$, the $(3,1)$-polygraph $\Path_3(n)$ is a coherent presentation of the monoid~$\P_n$.
\end{corollary}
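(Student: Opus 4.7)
\bigskip\noindent\textbf{Proof proposal.}
The plan is to transport the coherence result of Theorem~\ref{MainTheoremA} along the identification between the $2$-polygraphs $\Path_2(n)$ and $\Colo_2(n)$ established in~\ref{2polygraphofpaths}. First, I would make the correspondence between generators explicit: the mapping $m\mapsto c_{C(m)}$ from $\Path_1(n)$ to $\Colo_1(n)$ is a bijection, since every L-S path of shape $\omega_i$ corresponds to a unique column of length $i$ (its column reading), and conversely every column generator $c_u$ with $u=x_p\ldots x_1$ arises from the L-S path $\pi_{\epsilon_{x_1}+\ldots+\epsilon_{x_p}}$ of shape $\omega_p$.

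Next, I would check that this bijection extends to a bijection of $2$-cells. For any pair $(\pi_u,\pi_v)$ in $\Path_1(n)$ such that $\pi_u\star\pi_v$ is not a tableau, the unique tableau $\pi_w\star\pi_{w'}$ equivalent to $\pi_u\star\pi_v$ under $\sim_{\mathrm{path}(n)}$ corresponds, via column reading, to the Schensted tableau $P(C(\pi_u)C(\pi_v))$. Hence the $2$-cell $\alpha_{\pi_u,\pi_v}$ of $\Path_2(n)$ matches the $2$-cell $\alpha_{C(\pi_u),C(\pi_v)}$ of $\Colo_2(n)$ defined in~(\ref{rulesAlpha}). This gives an isomorphism of $2$-polygraphs $\Path_2(n)\cong\Colo_2(n)$, so the two polygraphs share the same critical branchings, indexed by triples of L-S paths $(\pi_u,\pi_v,\pi_t)$ with $\pi_u\star\pi_v$ and $\pi_v\star\pi_t$ not tableaux.

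The main (and essentially only) observation is then that the hexagonal $3$-cell attached in the definition of $\Path_3(n)$ is exactly the image, under this isomorphism, of the $3$-cell $\mathcal{X}_{u,v,t}$ of $\Colo_3(n)$ with $u=C(\pi_u)$, $v=C(\pi_v)$, $t=C(\pi_t)$: the six $2$-cells $\alpha_{\pi_u,\pi_v}$, $\alpha_{\pi_v,\pi_t}$, $\alpha_{\pi_e,\pi_b}$, $\alpha_{\pi_{e'},\pi_t}$, $\alpha_{\pi_u,\pi_w}$, $\alpha_{\pi_{a'},\pi_{w'}}$ appearing in the hexagon correspond term-by-term to the six $2$-cells of~(\ref{celluleX}), because the tableaux $\pi_e\star\pi_{e'}$, $\pi_w\star\pi_{w'}$, $\pi_a\star\pi_{a'}$, $\pi_b\star\pi_{b'}$ and $\pi_a\star\pi_d\star\pi_{b'}$ are precisely the L-S monomial translations of the columns $e,e',w,w',a,a',b,b',d$ produced by applying the Schensted algorithm to~$uvt$. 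Thus the $(3,1)$-polygraphs $\Path_3(n)$ and $\Colo_3(n)$ are isomorphic.

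Since by Theorem~\ref{MainTheoremA} the $(3,1)$-polygraph $\Colo_3(n)$ is a coherent presentation of~$\P_n$, the same holds for~$\Path_3(n)$, which concludes the proof. I do not expect a genuine obstacle here; the only thing to be careful about is to check that the possibly empty columns $e'$, $w'$, $a'$, $b'$ (which in the column formulation correspond to identity $2$-cells) are handled consistently on the path side, where an empty column corresponds to the trivial path $\theta$ and the associated $2$-cell collapses in the same way.
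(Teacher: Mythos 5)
Your proposal is correct and follows the same route as the paper: identify $\Path_2(n)$ with $\Colo_2(n)$ via the column reading of L-S paths (as set up in~\ref{2polygraphofpaths}), observe that the two $2$-polygraphs then have the same critical branchings and the same hexagonal confluence diagrams, and transport the conclusion of Theorem~\ref{MainTheoremA}. The paper states this more tersely, but your explicit verification of the generator and $2$-cell bijections, and your remark about empty columns corresponding to the trivial path, are exactly the details the paper leaves implicit.
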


\subsubsection{Example: $2$-polygraph $\Path_2(3)$}

Let us compute the elements of the $2$-polygraph of paths~$\Path_2(3)$ of the monoid~$\P_3$.
The set of $1$-cells is
\[
\Path_1(3)\;=\; \big\{\;\pi_{\epsilon_1}, \pi_{\epsilon_2}, \pi_{\epsilon_3}, \pi_{\epsilon_1+ \epsilon_2}, \pi_{\epsilon_1+ \epsilon_3}, \pi_{\epsilon_2+ \epsilon_3}, \pi_{\epsilon_1+ \epsilon_2+\epsilon_3}  \;\big\}
.
\]
The left and right sides of the $2$-cells of~$\Path_2(3)$ are the paths corresponding to the vertices appearing at the same place in the following crystal isomorphisms
\[
\scalebox{0.9}{
\xymatrix@C=2em @R=0.6cm{
B(\pi_{\epsilon_1}\star \pi_{\epsilon_2})
\\
{\young(2)\; \young(1)}
	\ar@<+0.4ex>[d]_-{2}
\\
{\young(3)\; \young(1)}
\ar@<+0.4ex>[d]_-{1}
\\
{\young(3)\; \young(2)}
} 
\quad
\raisebox{-10ex}{$ \simeq $}
\quad
\xymatrix@C=3em @R=0.6cm{
B(\pi_{\epsilon_1+\epsilon_2})
\\
{ \young(1,2)}
	\ar@<+0.4ex>[d]_-{2}
\\
{ \young(1,3)}
\ar@<+0.4ex>[d]_-{1}
\\
{\young(2,3)}
}
\qquad
\qquad
\xymatrix@C=3em @R=0.6cm{
B(\pi_{\epsilon_1+\epsilon_2+\epsilon_3}\star\pi_{\epsilon_1+\epsilon_2})
\\
{\raisebox{3.5mm}{\young(1,2)}\; \young(1,2,3)}
	\ar@<+0.4ex>[d]_-{2}
\\
{\raisebox{3.5mm}{\young(1,3)}\; \young(1,2,3)}
\ar@<+0.4ex>[d]_-{1}
\\
{\raisebox{3.5mm}{\young(2,3)}\; \young(1,2,3)}
} 
\quad
\raisebox{-10ex}{$\simeq$}
\quad
\xymatrix@C=3em @R=0.6cm{
B(\pi_{\epsilon_1+\epsilon_2}\star\pi_{\epsilon_1+\epsilon_2+\epsilon_3})
\\
{ \young(11,22,3)}
	\ar@<+0.4ex>[d]_-{2}
\\
{ \young(11,23,3)}
\ar@<+0.4ex>[d]_-{1}
\\
{\young(12,23,3)}
}
}\]
\[
\scalebox{0.9}{
\xymatrix@C=2em @R=0.6cm{
&&B(\pi_{\epsilon_1+\epsilon_2}\star \pi_{\epsilon_1})
\\
&& {\raisebox{3.7mm}{\young(1)}\; \young(1,2)}
 	\ar@<+0.4ex>[dl] _-{1}
 	\ar@<+0.4ex>[dr]^-{2}
 \\
&{\raisebox{3.7mm}{\young(2)}\; \young(1,2)}
	\ar@<+0.4ex>[d]_-{2}
&& {\raisebox{3.7mm}{\young(1)}\; \young(1,3)}
\ar@<+0.4ex>[d]^-{1}
\\
&{\raisebox{3.7mm}{\young(2)}\; \young(1,3)}
\ar@<+0.4ex>[d]_-{2}
&& {\raisebox{3.7mm}{\young(1)}\; \young(2,3)}
\ar@<+0.4ex>[d]^-{1}
\\
&{\raisebox{3.7mm}{\young(3)}\; \young(1,3)}
\ar@<+0.4ex>[dr]_-{1}
&&{\raisebox{3.7mm}{\young(2)}\; \young(2,3)}
\ar@<+0.4ex>[dl]^-{2}
\\
&&{\raisebox{3.7mm}{\young(3)}\; \young(2,3)}
 } 
\quad
 \raisebox{-21ex}{$\simeq$}
\quad
 \xymatrix@C=2em @R=0.6cm{
&&B(\pi_{\epsilon_1} \star\pi_{\epsilon_1+\epsilon_2})
\\
&& {\young(11,2)}
 	\ar@<+0.4ex>[dl] _-{1}
 	\ar@<+0.4ex>[dr]^-{2}
 \\
&{ \young(12,2)}
	\ar@<+0.4ex>[d]_-{2}
&& {\young(11,3)}
\ar@<+0.4ex>[d]^-{1}
\\
&{ \young(13,2)}
\ar@<+0.4ex>[d]_-{2}
&& {\young(12,3)}
\ar@<+0.4ex>[d]^-{1}
\\
&{ \young(13,3)}
\ar@<+0.4ex>[dr]_-{1}
&&{ \young(22,3)}
\ar@<+0.4ex>[dl]^-{2}
\\
&&{\young(23,3)}
 } 
}\]
\[
\scalebox{0.9}{
\xymatrix@C=3em @R=0.6cm{
B(\pi_{\epsilon_1+\epsilon_2}\star\pi_{\epsilon_1+\epsilon_3})
\\
{\young(1,3)\; \young(1,2)}
	\ar@<+0.4ex>[d]_-{1}
\\
{\young(2,3)\; \young(1,2)}
\ar@<+0.4ex>[d]_-{2}
\\
{\young(2,3)\; \young(1,3)}
} 
\quad
\raisebox{-10ex}{$\simeq$}
\quad
\xymatrix@C=3em @R=0.6cm{
B(\pi_{\epsilon_1}\star\pi_{\epsilon_1+\epsilon_2+\epsilon_3})
\\
{ \young(11,2,3)}
	\ar@<+0.4ex>[d]_-{1}
\\
{ \young(12,2,3)}
\ar@<+0.4ex>[d]_-{2}
\\
{\young(13,2,3)}
} 
\qquad
\qquad
\xymatrix@C=3em @R=0.6cm{
B(\pi_{\epsilon_1}\star \pi_{\epsilon_1+\epsilon_2})
\\
{\young(2,3)\; \raisebox{3.7mm}{\young(1)}}
}
\quad
\raisebox{-8ex}{$ \simeq $}
\quad
\xymatrix@C=3em @R=0.6cm{
B(\pi_{\epsilon_1+\epsilon_2+\epsilon_3})
\\
{\young(1,2,3)}
}
}
\]
\[\small
\scalebox{0.9}{
\xymatrix@C=3em @R=0.6cm{
B(\pi_{\epsilon_1+\epsilon_2+\epsilon_3}\star \pi_{\epsilon_1})
\\
{\raisebox{7.5mm}{\young(1)}\; \young(1,2,3)}
	\ar@<+0.4ex>[d]_-{1}
\\
{\raisebox{7.5mm}{\young(2)}\; \young(1,2,3)}
\ar@<+0.4ex>[d]_-{2}
\\
{\raisebox{7.5mm}{\young(3)}\; \young(1,2,3)}
} 
\quad
\raisebox{-10ex}{$\simeq$}
\quad
\xymatrix@C=3em @R=0.6cm{
B(\pi_{\epsilon_1}\star \pi_{\epsilon_1+\epsilon_2+\epsilon_3})
\\
{ \young(11,2,3)}
	\ar@<+0.4ex>[d]_-{1}
\\
{ \young(12,2,3)}
\ar@<+0.4ex>[d]_-{2}
\\
{\young(13,2,3)}
}
\qquad\qquad
\xymatrix@C=3em @R=0.6cm{
B(\pi_{\epsilon_1+\epsilon_2}\star \pi_{\epsilon_3})
\\
{{\raisebox{3.7mm}{\young(3)}\; \young(1,2)}}
}
\quad
\raisebox{-7ex}{$\simeq$}
\quad
\xymatrix@C=3em @R=0.6cm{
B(\pi_{\epsilon_1+\epsilon_2+\epsilon_3})
\\
{\young(1,2,3)}
}}
\]
This presentation of the monoid~$\P_3$ can be extending into a coherent one by adding $42$ $3$-cells as mentioned in~\ref{SubSection:Computations}.

\begin{small}
\renewcommand{\refname}{\Large\textsc{References}}
\bibliographystyle{abbrv}
\bibliography{biblioPLAXIQUE}
\end{small}

\vfill
\begin{flushright}
\begin{small}
\noindent \textsc{Nohra Hage} \\
\url{nohra.hage@univ-st-etienne.fr} \\
Univ Lyon, Universit\'e Jean Monnet\\ 
CNRS UMR 5208, Institut Camille Jordan\\
Maison de l'Universit\'e, 10 rue Tr\'efilerie, CS 82301\\
F-42023 Saint-\'Etienne Cedex 2, France

\bigskip
\noindent \textsc{Philippe Malbos} \\
\url{malbos@math.univ-lyon1.fr} \\
Univ Lyon, Universit\'e Claude Bernard Lyon 1\\
CNRS UMR 5208, Institut Camille Jordan\\
43 blvd. du 11 novembre 1918\\
F-69622 Villeurbanne cedex, France
\end{small}
\end{flushright}
\end{document}